\DeclareMathOperator{\HC}{\mathsf{HC}}
\DeclareMathOperator{\HP}{\mathsf{HP}}
\DeclareMathOperator{\QH}{\mathsf{QH}}
\DeclareMathOperator{\iii}{\texttt{i}}
\newcommand{\vhs}{$\mathsf{VSHS}$}
\newcommand{\power}[1]{[\![ #1 ]\!]}
\newcommand{\laurents}[1]{(\!( #1 )\!)}
\newcommand{\cM}{\mathcal{M}}
\newcommand\ainf{A_{\infty}}
\newcommand{\dbdg}[1]{D^b_{\mathsf{dg}}Coh(#1)}
\newcommand{\BsideOC}{\tilde{\mathfrak{I}}}
\begin{document}

\title{Mirror symmetry: from categories to curve counts}

\author[Ganatra, Perutz and Sheridan]{Sheel Ganatra\footnote{Supported by an NSF postdoctoral fellowship.}, Timothy Perutz\footnote{Partially supported by NSF grants DMS-1406418 and CAREER 1455265.}  and Nick Sheridan\footnote{Partially supported by the National Science Foundation through Grant number
DMS-1310604, and under agreement number DMS-1128155. 
Any opinions, findings and conclusions or recommendations expressed in this material are those of the author and do not necessarily reflect the views of the National Science Foundation.}}

\address{Sheel Ganatra, Department of Mathematics, Stanford University, 450 Serra Mall, Stanford CA 94305-2125, USA.}
\address{Timothy Perutz, University of Texas at Austin, Department of Mathematics, RLM 8.100, 2515 Speedway Stop C1200, Austin, TX 78712, USA.}
\address{Nick Sheridan, Department of Mathematics, Princeton University, Fine Hall, Washington Road, Princeton, NJ 08544-1000, USA.}

\begin{abstract}
{\sc Abstract:} 
We work in the setting of Calabi-Yau mirror symmetry. 
We establish conditions under which Kontsevich's homological mirror symmetry (which relates the derived Fukaya category to the derived category of coherent sheaves on the mirror) implies Hodge-theoretic mirror symmetry (which relates genus-zero Gromov-Witten invariants to period integrals on the mirror), following the work of Barannikov, Kontsevich and others. 
As an application, we explain in detail how to prove the classical mirror symmetry prediction for the number of rational curves in each degree on the quintic threefold, via the third-named author's proof of homological mirror symmetry in that case; we also explain how to determine the mirror map in that result, and also how to determine the holomorphic volume form on the mirror that corresponds to the canonical Calabi-Yau structure on the Fukaya category. 
The crucial tool is the `cyclic open-closed map' from the cyclic homology of the Fukaya category to quantum cohomology, defined by the first-named author in \cite{Ganatra2015}. 
We give precise statements of the important properties of the cyclic open-closed map: it is a homomorphism of \emph{variations of semi-infinite Hodge structures}; it respects \emph{polarizations}; and it is an isomorphism when the Fukaya category is \emph{non-degenerate} (i.e., when the open-closed map hits the unit in quantum cohomology). 
The main results are contingent on works-in-preparation \cite{Perutz2015a,Ganatra2015a} on the symplectic side, which establish the important properties of the cyclic open-closed map in the setting of the `relative Fukaya category'; and they are also contingent on a conjecture on the algebraic geometry side, which says that the cyclic formality map respects certain algebraic structures.
\end{abstract}

\maketitle

\tableofcontents

\section{Introduction}

\subsection{Standing notation}
\label{subsec:setup}

We denote $\BbK_A := \C \laurent{Q}$; $\cM_A := \spec \BbK_A$ will be called the \emph{K\"{a}hler moduli space}. 
We write $T \cM_A := \deriv_\C \BbK_A$ and $\Omega^1\cM_A := \Omega^1(\cM_A/\spec \C)$.
Similarly we denote $\BbK_B := \C \laurent{q}$; $\cM_B := \spec \BbK_B$ will be called the \emph{complex structure moduli space}.

Let $(X,\omega)$ be a connected $2n$-dimensional integral symplectic Calabi-Yau manifold (i.e., $[\omega] \in H^2(X;\Z)$ and $c_1(TX) = 0$). 

Let $Y \to \mathcal{M}_B$ be a smooth, projective, connected scheme of relative dimension $n$, with trivial relative canonical sheaf; and assume that $Y$ is \emph{maximally unipotent}, in the sense of \cite[Definition 1.4]{Perutz2015}. 
The latter condition deserves some explanation: it means that $\mathsf{KS}(\partial_q)^n \neq 0$, where
\[ \mathsf{KS}: \mathcal{T} (\cM_B/\spec \C) \to H^1(Y;\mathcal{T} (Y/\cM_B))\]
is the Kodaira--Spencer map (see \S \ref{subsec:Bvhs}), and the power is taken with respect to the wedge product on the space of polyvector fields, $H^\bullet(\wedge \! ^\bullet \mathcal{T} Y)$.

We will consider versions of mirror symmetry that relate the symplectic invariants of $X$ (which will be linear over the Novikov field $\BbK_A$) to the algebraic invariants of $Y$ (which will be linear over $\BbK_B$).

\subsection{Enumerative mirror symmetry in dimension three: curve counts on the quintic}

We consider Calabi-Yau mirror symmetry in the case when the dimension is $n=3$. 
The classic example is when $X=X^5$ is the `quintic threefold': a smooth quintic hypersurface in $\mathbb{CP}^4$, equipped with a K\"{a}hler form $\omega$ whose cohomology class is the hyperplane class $H$. 
The mirror is the quintic mirror family $Y=Y^5$, which is a crepant resolution of $\tilde{Y}^5/G$, where
\[ \tilde{Y}^5 := \left\{ -z_1\ldots z_5 + q \sum_{j=1}^5 z_j^5 \right\} \subset \mathbb{P}^4_{\BbK_B}\]
and $G$ is non-canonically isomorphic to $(\Z/5)^3$ (the group $(\Z/5)^5$ acts on $\mathbb{P}^4_{\BbK_B}$ by multiplying the coordinates $z_j$ by fifth roots of unity, the diagonal action is trivial, and one restricts to the subgroup that preserves the monomial $z_1\ldots z_5$). 

In \cite{Candelas1991}, Candelas, de la Ossa, Green and Parkes brought the new ideas of mirror symmetry into concrete form by formulating a prediction for the quintic threefold, thereby capturing the imagination of the mathematical community.  Let us review their prediction, following \cite[\S  2]{coxkatz}.

The \emph{$A$-model Yukawa coupling} associated to $X$ is the three-tensor
\begin{eqnarray*}
Yuk_A & \in & Sym^3\left(\Omega^1 \cM_A\right),\\
Yuk_A(Q \partial_Q,Q\partial_Q,Q\partial_Q) & := & \langle H,H,H \rangle_{0,3}
\end{eqnarray*}
where the symbol $\langle H,H,H \rangle_{0,3}$ denotes the genus-zero three-point Gromov-Witten invariant. $Yuk_A$ is a power series in $Q$.  We work with symplectic Gromov-Witten invariants as in \cite{Ruan1995,mcduffsalamon}: so this is a count of pseudoholomorphic maps $u:\mathbb{CP}^1 \to X$, weighted by $Q^{\omega(u)} \in \BbK_A$. 
It can be rewritten as
\[ \langle H,H,H \rangle_{0,3} = \int_X \omega^3 + \sum_{d =1}^\infty n_d \cdot d^3 \cdot \frac{Q^{d}}{1-Q^{d}},\]
where $n_d$ is interpreted as `the virtual number of degree-$d$ curves on $X$' by the Aspinwall-Morrison formula (see for instance \cite{Voisin1996, Bryan2001}).

On the other hand, one defines the \emph{$B$-model Yukawa coupling} associated to $Y$ using Hodge theory: it is the three-tensor
\begin{eqnarray*}
Yuk_B & \in & Sym^3 \left(\Omega^1 \cM_B \right),\\
Yuk_B \left(q \partial_q,q\partial_q,q\partial_q\right) & := & \int_Y \Omega \wedge \nabla_{q\partial_q}^3 \Omega,
\end{eqnarray*}
where $\Omega$ is a specific choice of a relative holomorphic volume form on the family $Y$. 
Namely, $\Omega$ should be `Hodge-theoretically normalized' in the terminology of \cite{coxkatz} (see \S  \ref{subsec:normvol}). 
This determines $\Omega$ up to multiplication by a complex scalar, which for consistency with \cite[\S  2]{coxkatz} we refer to as `$c_2$'. 

Mirror symmetry predicts the existence of an isomorphism
\begin{equation}
\label{eqn:mirrmap}
 \psi: \cM_A \to \cM_B,
\end{equation}
called the \emph{mirror map}, that respects the Yukawa couplings. 
It also predicts a formula for the mirror map: the corresponding isomorphism $\psi^*: \BbK_B \to \BbK_A$ sends $Q \mapsto Q(q)$.
We remark that the constant $c_2$ can be normalized so that the leading terms of the Yukawa couplings match up (the leading term on the $A$-side is $\int_X \omega^3$, which for the quintic is $5$).

The B-model Yukawa coupling can be obtained as follows. One first identifies the Picard-Fuchs (PF) equation satisfied by the periods of some specified holomorphic volume form $\Omega$ with respect to the coordinate $q$  on $\mathcal{M}_B$. The Yukawa coupling for $\Omega$ then satisfies a first-order differential equation related to the PF equations. The constant $c_2$ and the mirror map are determined by solutions to the PF equations which are, respectively, holomorphic and logarithmic at $q=0$. See \cite{coxkatz} for details. We note that the PF equations are not an intrinsic aspect of mirror symmetry, but rather a means of calculation; they will not directly play a role in the present paper.

The $B$-model Yukawa coupling and the mirror map can be explicitly computed: so mirror symmetry gives a prediction for the virtual numbers $n_d$ of degree-$d$ curves on the quintic \cite{Candelas1991}. 
These predictions were verified by Givental \cite{Givental1996} and Lian, Liu and Yau \cite {Lian1997}. 
The results are well-known: $n_1 = 2875, n_2 = 609250,\ldots$. 

\subsection{Hodge-theoretic mirror symmetry}
\label{subsec:hodgems}

Morrison \cite{Morrison1993a} formulated the mirror symmetry predictions of \cite{Candelas1991} in terms of Hodge theory (see also \cite{Kontsevich1994, Morrison1997, coxkatz}). To a family $Y \to \cM_B$ (no longer necessarily of dimension three), one can associate a variation of Hodge structures, using classical Hodge theory; somewhat more surprisingly, using the rational Gromov-Witten invariants of $X$, one can cook up a variation of Hodge structures over $\cM_A$. 
In Morrison's formulation, mirror symmetry predicts the existence of an isomorphism of variations of Hodge structures covering the mirror map \eqref{eqn:mirrmap}. 

In fact, what we will consider in this paper is not exactly a variation of Hodge structures in the classical sense. 
Rather, we consider \emph{variations of semi-infinite Hodge structures} (\vhs), as defined by Barannikov \cite{Barannikov2001}. 
A \vhs{} over $\cM$ consists, briefly, of an $\mathcal{O}_\cM\power{u}$-module $\EuE$, equipped with a flat connection
\[ \nabla: T\cM \otimes \EuE \to u^{-1} \EuE.\]
Here $u$ is a formal variable of degree $2$.
A \emph{polarization} for $\EuE$ is a symmetric, sesquilinear, covariantly constant pairing
\[ (\cdot,\cdot): \EuE \times \EuE \to \mathcal{O}_\cM\power{u}\]
with a certain `nondegeneracy' property; see Definition \ref{defn:prevhs} for the precise definition. 

We will always consider graded polarized \vhs{}, where the base $\cM$ has trivial grading (in fact, we will always assume the base $\cM$ is a formal punctured disc). 
We explain (following \cite[\S  4]{Barannikov2001}) that in this setting, a \vhs{} is equivalent to a $\Z/2$-graded $\mathcal{O}_\cM$-module equipped with a Hodge filtration and a flat connection satisfying Griffiths transversality; and a polarization is equivalent to a covariantly constant pairing on this module, respecting the Hodge filtration in a certain way; see Lemma \ref{lem:realvhs} for the details (this relationship, between bundles with a filtration and equivariant bundles over a formal disc in the $u$-direction, is called the `Rees correspondence').
This is equivalent to the usual notion of a variation of Hodge structures, except that a \vhs{} does not come equipped with an integral structure (i.e., a lattice of flat sections). 
Thus, a \vhs{} is equivalent to a `variation of Hodge structures without the integral structure'. 
We will not consider the integral structure in this paper (however see \cite[\S  2.2.6]{Katzarkov2008} and \cite{Iritani2009a}).

\begin{rmk}
The close relationship between semi-infinite and classical variations of Hodge structures might make one skeptical that the semi-infinite variations deserve their own name. 
However, when the base $\cM$ has non-trivial grading (as happens in the case of mirror symmetry with Fano manifolds on the symplectic side), the relationship between the two is not so close; and semi-infinite variations are the correct notion. 
Many of the results in this paper hold in the Fano case, but only if we use the `semi-infinite' terminology (and incorporate gradings on the base $\cM$); that is why we use it, even though it may be confusing because there is nothing `semi-infinite' happening in the Calabi-Yau case.
\end{rmk}

In \S \ref{subsec:QHnorm}, we define $\EuH^A(X)$, the \emph{$A$-model \vhs{} associated to $X$}; it is a polarized \vhs{} over $\cM_A$.
As a bundle over $\cM_A$, it is trivial with fibre $H^\bullet(X;\C)$, the Hodge filtration is the filtration by degree, the connection is the quantum connection (with connection matrix given by quantum cup product with $[\omega]$), and the polarization is the integration pairing.
In \S \ref{subsec:Bvhs}, we define $\EuH^B(Y)$, the \emph{$B$-model \vhs{} associated to $Y$}; it is a polarized \vhs{} over $\cM_B$.
Its fibre is the relative de Rham cohomology of $Y$ (with the grading collapsed to a $\Z/2$-grading), the Hodge filtration is the usual one, the connection is the Gauss-Manin connection, and the polarization is given by the integration pairing. 

\begin{rmk}
Note that we do not consider the classical polarized variation of Hodge structures associated to a smooth and proper family of varieties, in which the polarization depends on a choice of K\"ahler class. Rather, the polarization is a sign-modified version of the integration pairing. Moreover, the $\Z$-grading on de Rham cohomology, which decomposes the classical Hodge structure into summands of different weights, is here collapsed to a $\Z/2$-grading. 
Perhaps it is helpful to recall that the global Torelli theorem for K3 surfaces has to do with the Hodge structure on $H^2$, whereas the derived Torelli theorem for K3 surfaces \cite{Orlov1997} has to do with the version with the collapsed grading (which was introduced in this context by Mukai \cite{Mukai1987}).
The formulation of (higher-dimensional) Hodge-theoretic mirror symmetry conjectured in \cite[\S 8.6.3]{coxkatz} \emph{does} invoke the classical polarized variation of Hodge structures on $H^\bullet(Y;\C)$ associated with a K\"ahler class, and the Hodge decomposition of $H^\bullet(X;\C)$ arising from a specific complex structure on $X$. We do not know how to incorporate the complex structure into our categorical story. 
\end{rmk}

\begin{defn}
\label{defn:hodgems}
We say that $X$ and $Y$ are \emph{Hodge-theoretically mirror} if there exists an isomorphism
\[ \psi: \cM_A \to \cM_B,\]
and an isomorphism of \vhs{} over $\cM_A$,
\[ \EuH^A(X) \cong \psi^* \EuH^B(Y).\]
\end{defn}

It is well-known (see e.g. \cite{coxkatz,Barannikov2001}) that a \vhs{} $\EuH$ over $\cM$ with a certain `miniversality' property determines canonical coordinates on its base, up to multiplication by a complex scalar: one can think of this as an affine structure on $\cM$. 
We give detailed explanations on this point in \S  \ref{sec:vhs}, adapted to our setup.
The \vhs{} that we consider are miniversal in the appropriate sense, so $\EuH^A(X)$ and $\EuH^B(Y)$ determine canonical coordinates on their respective bases: and in the situation of Hodge-theoretic mirror symmetry, the mirror map $\psi$ must match up these canonical coordinates. 
In particular, $\psi$ is uniquely determined up to multiplication by a complex scalar, which for consistency with \cite[\S  2.5]{coxkatz} we denote by `$c_1$'. 

This prescription gives rise to the aforementioned explicit formula for the mirror map in terms of solutions to the Picard-Fuchs equation, up to the undetermined constant $c_1$. The A- and B-model $n$-fold Yukawa couplings can be computed from the corresponding \vhs{}. Thus Hodge-theoretic mirror symmetry implies equality of Yukawa couplings, up to the constant $c_1$, which can be normalized using the first-order term of the Yukawa coupling (which is $n_1 = 2875$ for the quintic threefold). 
Therefore, Hodge-theoretic mirror symmetry implies equality of Yukawa couplings (up to determination of the constant $c_1$).

When the dimension is not equal to three, Yukawa couplings do not determine the genus-zero Gromov--Witten invariants, but only certain products of these invariants. As we have formulated it, the A-model \vhs{} contains much more information about the genus-zero GW invariants than just the Yukawa couplings, but still not complete information (it would be complete if we considered the `big' A-model \vhs{}, but we have not done that).
So we would like to answer the question: supposing Hodge-theoretic mirror symmetry to be true in the sense of Definition \ref{defn:hodgems}, how much information about Gromov-Witten invariants can one compute from the $B$-model \vhs{}? 

Following the construction of Barannikov \cite{Barannikov2001}, we give a precise answer to this question (see Theorem \ref{thm:hodgemsinfo}): \emph{Hodge-theoretic mirror symmetry allows us to determine the matrix $A(Q)$ of quantum cup-product with $[\omega]$, up to substitution $Q \mapsto Q/c_1$.}

In this paper, we do not address the question of how to compute the corresponding B-side matrix in practice.

\subsection{The Fukaya category}
\label{subsec:fukint}

We consider some version of the Fukaya category of $X$, which we denote $\EuF(X)$. 
We restrict ourselves to versions where $\EuF(X)$ is $\Z$-graded and $\BbK_A$-linear.

\begin{rmk}
One should only expect $\EuF(X)$ to be $\Z$-graded when $X$ is Calabi-Yau, and one should only expect it to be $\BbK_A$-linear (as opposed to being defined over some larger Novikov field) when the symplectic form is integral. 
\end{rmk}

In \S  \ref{sec:fuk}, we give a list of properties that we need the Fukaya category $\EuF(X)$ to have in order for our results to work. 
We expect these properties to hold very generally, so we do not tie ourselves to a particular version of the Fukaya category. 
However, it will be proven in \cite{Perutz2015a,Ganatra2015a} (in preparation) that the \emph{relative Fukaya category} has all of the necessary properties, so the range of proven applicability of our results is not empty (and in fact, includes the very interesting case of Calabi-Yau hypersurfaces in projective space, such as the quintic threefold $X^5$, as we will explain in \S  \ref{subsec:app}). 

Let us briefly outline what the construction of the relative Fukaya category looks like, so the reader can keep a concrete example in mind. 
It depends on a choice of \emph{integral Calabi-Yau relative K\"{a}hler manifold}: that is, a Calabi-Yau K\"{a}hler manifold $(X,\omega)$, together with an ample simple normal crossings divisor $D \subset X$, and a proper K\"{a}hler potential $h$ for $\omega$ on $X \setminus D$: in particular, $\omega = d\alpha$ is exact on $X \setminus D$, where $\alpha := d^c h$. 
This defines a map
\begin{eqnarray*}
H_2(X,X \setminus D) & \to & \R, \\
u & \mapsto & \omega(u) - \alpha(\partial u),
\end{eqnarray*}
which we require to take integer values (hence the word `integral' in the name).

Objects of the relative Fukaya category are closed, exact Lagrangian branes $L \subset X \setminus D$. 
Floer-theoretic operations are defined by counting pseudoholomorphic curves $u:\Sigma \to X$, with boundary on Lagrangians in $X \setminus D$ (transversality of the moduli spaces is achieved using the stabilizing divisor method of Cieliebak and Mohnke). 
These counts of curves $u$ are weighted by $Q^{\omega(u) - \alpha(\partial u)} \in \BbK_A$. 
Note that these monomials really do lie in $\BbK_A$, by our assumption that the exponent is an integer.
The resulting curved $\ainf$ category is denoted $\EuF(X,D)_{curv}$: we define an honest (non-curved) $\ainf$ category $\EuF(X,D)$, whose objects are objects of $\EuF(X,D)_{curv}$ equipped with bounding cochains.

We emphasise that, if you want to apply our results to your favourite version of the Fukaya category, you just need to verify that it has the properties outlined in \S  \ref{sec:fuk}.

\subsection{Homological mirror symmetry}

Let $X$ and $Y$ be as in \S \ref{subsec:setup}.
Let $\EuF(X)$ a version of the Fukaya category of $X$ as in the previous section, and let $\dbdg{Y}$ be a $\mathsf{dg}$ enhancement of $D^bCoh(Y)$, the bounded derived category of coherent sheaves on $Y$: we regard it as a $\Z$-graded, $\BbK_B$-linear $\ainf$ category. 
The $\mathsf{dg}$ enhancement is unique up to quasi-equivalence, by  \cite[Theorem 8.13]{Lunts2010}.
It is triangulated and split-closed, in the $\ainf$ sense (see \cite[Lemma 5.3]{Seidel2003} for split-closure).

If $\EuC$ is an $\ainf$ category, `$\twsplit \EuC$' denotes the split-closed triangulated envelope (denoted `$\Pi (Tw(\EuC))$' in \cite{Seidel2008})

\begin{defn}
We say that $X$ and $Y$ are \emph{homologically mirror} if there exists an isomorphism $\psi: \cM_A \to \cM_B$, and a quasi-equivalence of $\BbK_A$-linear $\ainf$ categories
\begin{equation} \label{eq:hms}
\twsplit \EuF(X) \cong \psi^* \dbdg{Y}.
\end{equation}
To clarify: since $\EuF(X)$ is $\BbK_A$-linear and $\dbdg{Y}$ is $\BbK_B$-linear, we need the isomorphism $\psi^*:\BbK_B \to \BbK_A$ between their respective coefficient fields in order to compare them.
\end{defn}

In \cite{Kontsevich1994}, Kontsevich conjectured that mirror pairs $(X,Y)$ ought also to be homologically mirror. 
He also conjectured that this `homological mirror symmetry' (HMS) implies Hodge-theoretic and hence enumerative mirror symmetry.
The main result of this paper is about establishing criteria under which the latter claim holds. 

\begin{main}
\label{thm:main}
Suppose that $X$ and $Y$ are as in \S \ref{subsec:setup}, $\EuF(X)$ satisfies the properties outlined in \S  \ref{sec:fuk}, that $X$ and $Y$ are homologically mirror, and furthermore that Conjecture \ref{conj:B} holds. 
Then $X$ and $Y$ are also Hodge-theoretically mirror. That is, there is an isomorphism of \vhs,
\[ \EuH^A(X) \cong \psi^* \EuH^B(Y), \]
with the same mirror map $\psi$ as appears in the statement of homological mirror symmetry.
\end{main}

The proof of Theorem \ref{thm:main} goes via Kontsevich's noncommutative Hodge theory, and its broad outline was no doubt foreseen by Kontsevich, Barannikov and others long ago (see in particular \cite{Barannikov1998,Costello2009,Katzarkov2008}). 

\subsection{\vhs{} from categories}

Let $\BbK \supset \C$ be a field extension of $\C$, and denote $\cM := \spec \BbK$ (in this paper we will consider the case that $\cM$ is a formal punctured disc, so $\BbK \cong \C \laurent{q}$). 
In this section, we summarize a well-known construction that associates, to a $\Z$-graded $\BbK$-linear $\ainf$ category $\EuC$ satisfying certain finiteness and duality conditions, a degenerate version of a \vhs{} over $\cM$, which we call a \emph{pre-\vhs{}} (Definition \ref{defn:prevhs}). 
A pre-\vhs{} over $\cM$ can be thought of as the same data as a \vhs{} over $\cM$, except that the $\mathcal{O}_\cM$-module is not required to be a vector bundle of finite rank, and the polarization (if it exists) is not required to be nondegenerate.
Under certain hypotheses on $\EuC$ which hold in the examples of interest, the resulting pre-\vhs{} is actually a
\vhs.
\begin{defn}
Let $\EuC$ be a $\Z$-graded $\BbK$-linear $\ainf$ category.
\begin{itemize}
\item $\EuC$ is called \emph{ proper} if it has cohomologically finite rank morphism spaces (over $\BbK$).

\item $\EuC$ is called \emph{(homologically) smooth} if the diagonal bimodule
    $\EuC_{\Delta}$ is a perfect bimodule.  
\end{itemize}
\end{defn}

\begin{prop}
\label{prop:vhscat}
Let $\EuC$ be a $\Z$-graded $\BbK$-linear $\ainf$ category. Then
\begin{itemize}
\item The data \[ \left( \HC_\bullet^-(\EuC), \nabla^{GGM} \right )\] 
    forms an unpolarized pre-\vhs, where $\HC_\bullet^-(\EuC)$ is the negative cyclic homology of $\EuC$, and $\nabla^{GGM}: T \cM  \otimes \HC^-_\bullet(\EuC)\to u^{-1} \HC^-_\bullet(\EuC)$ is Getzler's Gauss-Manin connection \cite{Getzler1993}.
\item If $\EuC$ is furthermore proper, and endowed with an $n$-dimensional weak proper Calabi-Yau structure in the sense of Definition \ref{properCY}, then the pre-\vhs{} $(\HC_\bullet^-(\EuC),\nabla^{GGM})$ acquires an $n$-dimensional polarization, given by Shklyarov's higher residue pairing $\langle -,-\rangle_{res}$ \cite{Shklyarov2013}.  

\item     If $\EuC$ is furthermore \emph{smooth}, and the \emph{non-commutative Hodge--de Rham spectral sequence degenerates}, then this polarized pre-\vhs{} is in fact a polarized \vhs.
    \end{itemize}
The data $\left( \HC_\bullet^-(\EuC), \nabla^{GGM} \right )$ and the pairing  $\langle -,-\rangle_{res}$ are \emph{Morita invariant}: categories with quasi-equivalent split-closed triangulated envelopes give rise to the same polarized \vhs{}. 
\end{prop}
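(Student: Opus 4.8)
The plan is to assemble the statement from a sequence of known constructions, verifying compatibility at each stage. First I would recall the cyclic chain complex model for $\HC^-_\bullet(\EuC)$: the complex $\left(CC_\bullet(\EuC)\power{u}, b + uB\right)$, where $b$ is the Hochschild differential and $B$ the Connes operator, equipped with its natural $u$-adic filtration. This is a module over $\C\power{u}$ (and over $\mathcal{O}_\cM\power{u}$ after base change, since $\EuC$ is $\BbK$-linear), with $u$ of degree $2$; the $\Z$-grading on $\EuC$ makes the total complex $\Z$-graded, which collapses to the required $\Z/2$-grading. To construct $\nabla^{GGM}$, I would import Getzler's formula \cite{Getzler1993}: given a derivation (vector field) $v \in T\cM$, one produces an operator on $CC_\bullet(\EuC)\power{u}$ of the form $L_v + u^{-1}\iota_v(\text{something})$, using cyclic Lie derivative and contraction operators associated to the first-order deformation of the $\ainf$-structure induced by $v$. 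The key points to check are (i) that $\nabla^{GGM}_v$ is a chain map up to the prescribed $u^{-1}$-shift, so it descends to $\nabla^{GGM}: T\cM\otimes\HC^-_\bullet(\EuC)\to u^{-1}\HC^-_\bullet(\EuC)$; (ii) flatness, i.e. $[\nabla^{GGM}_v,\nabla^{GGM}_w] = \nabla^{GGM}_{[v,w]}$ on homology, which is Getzler's main identity; and (iii) that the connection is $u$-linear and satisfies the Leibniz rule over $\mathcal{O}_\cM\power{u}$. Together these give the unpolarized pre-\vhs, since by definition (Definition \ref{defn:prevhs}) a pre-\vhs drops the finite-rank and nondegeneracy requirements, so no further analysis of $\HC^-_\bullet(\EuC)$ as a module is needed at this stage.

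For the second bullet, assuming $\EuC$ is proper with a weak proper Calabi-Yau structure of dimension $n$ (Definition \ref{properCY}), I would invoke Shklyarov's higher residue pairing \cite{Shklyarov2013}: properness gives a perfect Mukai-type pairing on Hochschild homology via the Calabi-Yau duality, and Shklyarov shows this lifts to a pairing $\langle-,-\rangle_{res}: \HC^-_\bullet(\EuC)\times\HC^-_\bullet(\EuC)\to\mathcal{O}_\cM\power{u}$. The properties to verify are exactly those in the definition of a polarization: $\C\power{u}$-sesquilinearity (i.e. $\langle u f, g\rangle_{res} = u\langle f,g\rangle_{res} = \langle f, -ug\rangle_{res}$, with the sign convention from the $u\mapsto -u$ involution), the graded-symmetry $\langle f,g\rangle_{res} = (-1)^{?}\langle g,f\rangle_{res}$ (the sign governed by $n$, which is why it is an ``$n$-dimensional'' polarization), and covariant constancy $\partial_v\langle f,g\rangle_{res} = \langle\nabla^{GGM}_v f,g\rangle_{res} + \langle f,\nabla^{GGM}_v g\rangle_{res}$. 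The first two are formal consequences of the construction of $\langle-,-\rangle_{res}$ on the cyclic complex; the covariant-constancy identity is the substantive one, and here I would cite (or reprove in our conventions) the compatibility between Getzler's connection and Shklyarov's pairing — this is the analogue of the classical statement that the Gauss-Manin connection is flat with respect to the polarization, and it is essentially in \cite{Shklyarov2013} (or can be derived from the cyclic $A_\infty$-structure underlying the Calabi-Yau structure).

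For the third bullet, I would use smoothness together with degeneration of the non-commutative Hodge-de Rham spectral sequence (the spectral sequence from Hochschild to periodic cyclic homology) to upgrade the pre-\vhs to a \vhs: smoothness and properness together force $\HH_\bullet(\EuC)$ to be finite-dimensional over $\BbK$, and degeneration implies $\HC^-_\bullet(\EuC)$ is a free $\BbK\power{u}$-module of finite rank (equal to $\dim_\BbK\HH_\bullet(\EuC)$), which is precisely the missing ``vector bundle of finite rank'' condition; simultaneously, the Calabi-Yau perfectness of the Mukai pairing then makes $\langle-,-\rangle_{res}$ nondegenerate in the required sense. This is the Kaledin-type argument and I would cite it. Finally, Morita invariance: $\HH_\bullet$, $\HC^-_\bullet$, the operator $B$, Getzler's connection, and the Shklyarov pairing are all defined via the (co)bar/cyclic bicomplex and are all known to be preserved by quasi-equivalences and by passing to $\twsplit$ (twisted complexes and split-closure induce quasi-isomorphisms on Hochschild and cyclic complexes — see the references on agreement of Hochschild invariants under $\ainf$ Morita equivalence); assembling these gives that the whole polarized \vhs\ depends only on the Morita equivalence class. \textbf{The main obstacle} I expect is not any single step but rather bookkeeping: pinning down sign and grading conventions so that the collapsed $\Z/2$-grading, the degree of $u$, the sesquilinearity involution $u\mapsto -u$, and the symmetry sign of $\langle-,-\rangle_{res}$ are mutually consistent with Definition \ref{defn:prevhs}, and checking that Getzler's connection (originally written for algebras) transports correctly to the $\ainf$-categorical and $\mathcal{O}_\cM$-linear setting with the right $u^{-1}$-pole behavior; the covariant-constancy of the higher residue pairing is the one genuinely non-formal input, and everything else is a matter of carefully citing and aligning the literature.
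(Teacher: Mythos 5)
Your outline matches the paper's approach: the paper does not prove Proposition \ref{prop:vhscat} in detail but assembles it from exactly the sources you cite (Getzler for the connection, Shklyarov for the pairing, Hodge--de Rham degeneration for freeness, Morita invariance of the cyclic complexes), deferring the self-contained verification of the axioms and sign conventions to the companion note \cite{Sheridan2015a}. One small correction to your bookkeeping: the higher residue pairing exists for any proper category and its non-degeneracy modulo $u$ follows from smoothness and properness via \cite[Theorem 1.4]{Shklyarov2012}, whereas the weak proper Calabi-Yau structure is used only to ensure that the pairing is graded symmetric, as the paper points out in a remark immediately after the proposition.
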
 

\begin{rmk}
Let us recall Kontsevich--Soibelman's \emph{non-commutative Hodge--de Rham degeneration conjecture} \cite[Conjecture 9.1.2]{Kontsevich2006}: it says that the non-commutative Hodge--de Rham spectral sequence degenerates for arbitrary proper and smooth $\EuC$; if it holds we can remove that hypothesis from the final bullet point.
\end{rmk}

\begin{rmk}
The requirement that $\EuC$ admit an $n$-dimensional weak proper Calabi-Yau
structure is not used in the construction of any of the structures above, but
serves only to ensure that Shklyarov's higher residue pairing is graded symmetric.
\end{rmk}

Much of Proposition \ref{prop:vhscat} appears directly in the literature: in particular, the construction of the connection for $A_\infty$ algebras  is due to Getzler \cite{Getzler1993} (see also \cite{Tsygan2007,Dolgushev2011}), and the construction of the polarization for $\mathsf{dg}$ categories is due to Shklyarov \cite{Shklyarov2013} (the adaptations to $\ainf$ categories are minor). 
The fact that this data together is Morita invariant and satisfies the axioms of a pre-\vhs{} is known or expected and at least partially appears in various sources. 
See the companion note \cite{Sheridan2015a} for a self-contained proof of Proposition \ref{prop:vhscat}, along with an explanation of how our conventions and formulae for these structures on an $\ainf$ category align with existing references.

\subsection{Comparison of \vhs: symplectic side}

Assume that $\EuF(X)$ has all of the properties listed in \S  \ref{sec:fuk}. 
On the symplectic side of mirror symmetry, the key results are the following (proved in \S  \ref{sec:pf}):

\begin{main}
\label{thm:cycoc}
There exists a map of polarized pre-\vhs, called the \emph{negative cyclic open-closed map}:
\begin{equation}
\label{eqn:ocminus}
\widetilde{\EuO\EuC}^-: \HC_\bullet^-(\EuF(X)) \to \EuH^A(X).
\end{equation}
It respects polarizations.
\end{main}

Explicitly, Theorem \ref{thm:cycoc} says that the map $\widetilde{\EuO\EuC}^-$ respects connections, in the sense that 
\[ \widetilde{\EuO\EuC}^- \circ \nabla^{GGM} = \nabla^{QDE} \circ \widetilde{\EuO\EuC}^-,\]
and also that it respects polarizations, in the sense that 
\[ \langle \widetilde{\EuO\EuC}^-(\alpha),\widetilde{\EuO\EuC}^-(\beta) \rangle = \langle \alpha,\beta \rangle_{res}.\]

Now we establish criteria under which the cyclic open-closed map is an isomorphism. 
The crucial hypothesis is called \emph{non-degeneracy} of the Fukaya category, and was introduced in \cite{Ganatra2013}:

\begin{defn}
The Fukaya category $\EuF(X)$ is called \emph{non-degenerate} if the open-closed map 
\[ \EuO\EuC: \HH_\bullet(\EuF(X)) \to \QH^{\bullet+n}(X)\]
hits the unit $e \in \QH^0(X)$.
\end{defn}

\begin{rmk}
    It follows from the definition of Hochschild homology that the preimage
    $[\sigma]$ of $e \in QH^0(X)$ is necessarily contained in the image of the
    inclusion $\HH_{\bullet}(\EuA) \to \HH_{\bullet}(\EuF(X))$, for some finite
    full sub-category $\EuA \subset \EuF(X)$. We call any such $\EuA$ an
    \emph{essential sub-category}; the work of \cite{Abouzaid2010a},
    implemented for relative Fukaya categories in \cite{Perutz2015a}, implies that
    any such $\EuA$ split-generates $\EuF(X)$.
\end{rmk}

\begin{rmk}\label{rmk:automaticsplitgeneration} It follows from \cite[Theorem B]{Perutz2015} that, if $X$
    and $Y$ are homologically mirror, then $\EuF(X)$ is automatically
    non-degenerate (the standing assumption that $Y$ is \emph{maximally unipotent} is crucial for this).
\end{rmk}

\begin{main}[Compare \cite{Ganatra2013, Ganatra2015}]
\label{thm:ociso}
If $\EuF(X)$ is non-degenerate and smooth, then $\widetilde{\EuO\EuC}^-$ is an isomorphism: so `$\EuF(X)$ knows the $A$-model \vhs'. 
\end{main}

\begin{rmk}
It follows from Theorem \ref{thm:ociso} that $\EuF(X)$ satisfies the
non-commutative Hodge--de Rham degeneration conjecture. In particular,
$(\HC_\bullet^-(\EuF(X)),\nabla^{GGM},\langle -,-\rangle_{res})$ is a genuine
polarized \vhs.
\end{rmk}

\subsection{Comparison of \vhs: algebro-geometric side}
\label{subsec:vhsBside}

On the algebro-geometric side, $\dbdg{Y}$ is proper (because $Y$ is proper as a
scheme), smooth (as $Y$ is smooth as a scheme), and admits a weak
proper Calabi-Yau structure (because the canonical sheaf of $Y$ is trivial).  Proposition
\ref{prop:vhscat} therefore endows the negative cyclic homology
$\HC_\bullet^-(\dbdg{Y})$ with the structure of a pre-\vhs{}  (which
is in fact a \vhs{}, as we will see in Remark \ref{rmk:Bhodgederham}). 
There is an intermediate object $\HC_\bullet^-(Y)$, sitting between this one
and $\EuH^B(Y)$, which is the {\it negative cyclic homology of the scheme} of
Loday \cite{Loday1986} and Weibel \cite{Weibel1996}.  It is defined to be the
derived global sections of the sheafification of negative cyclic homology
groups of the structure sheaf of $Y$.

By \cite{Keller1998a}, there is an isomorphism of graded $\BbK_B\power{u}$-modules,
\begin{equation}\label{eq:kellerisomorphism}
    \HC_\bullet^-(\dbdg{Y})  \stackrel{\cong}{\rightarrow} \HC_{\bullet}^-(Y).
\end{equation}
Next, there is a Hochschild-Kostant-Rosenberg (HKR) type isomorphism of graded
$\BbK_B\power{u}$-modules \cite{Weibel1997} 
\[    \tilde{I}_{HKR} \colon \HC_{\bullet}^-(Y ) \to \EuE^B(Y),\]
where
\[\EuE^B(Y) := \bigoplus_{i \in \Z}  u^i \cdot
F^{-i} H_{\mathsf{dR}}^{\bullet-2i}(Y)  \]
is the $\BbK_B \power{u}$-module underlying the $B$-model \vhs{} $\EuH^B(Y)$ (compare \S \ref{subsec:Bvhs}). 

The $B$-model \vhs{} $\EuH^B(Y)$ also comes with a connection, which is $u^{-1}$ times the Gauss--Manin connection, and a polarization, which is the integration pairing (see \S \ref{subsec:Bvhs} for details).

The map induced by $\tilde{I}_{HKR}$ on the associated graded modules of the
$u$-adic filtrations is the HKR isomorphism for Hochschild homology:
\[ I_{HKR}: \HH_\bullet(Y) \to H^\bullet(\Omega^{-\bullet}Y).\] 
However, this isomorphism does not respect the relevant algebraic structures. 
As suggested in  \cite{Caldararu2005} (following \cite{Kontsevich2003}), one should consider instead the `modified' HKR map 
\[ I_K \co \HH_\bullet (Y) \xrightarrow{I_{HKR}} H^\bullet(\Omega^{-\bullet}Y) \xrightarrow{\mathsf{td}^{1/2}(Y) \wedge -} H^\bullet(\Omega^{-\bullet}Y),\] 
where $\mathsf{td}^{1/2}(Y)$ is the square root of the Todd class of $T Y$. 
It was conjectured in \cite[Conjecture 5.2]{Caldararu2005} and proven in \cite{Calaque2012} (respectively, \cite{Markarian2008,Ramadoss2008a}) that this map respect the `calculus' structure (respectively, the Mukai pairing).

Therefore it makes sense to modify $\tilde{I}_{HKR}$ to 
\begin{equation}
\label{eqn:itK} \tilde{I}_K \co \HC_\bullet^-(Y) \xrightarrow{\tilde{I}_{HKR}} \EuE^B(Y) \xrightarrow{\mathsf{td}^{1/2}(Y) \wedge -}\EuE^B(Y),
\end{equation}
where $\mathsf{td}^{1/2}(Y)$ is now treated as a class in $\EuE^B(Y)_0 = \bigoplus_i u^i \cdot F^{-i} H_{\mathsf{dR}}^{-2i}(Y)$.
Combining \eqref{eq:kellerisomorphism} and \eqref{eqn:itK}, we obtain an isomorphism

\begin{equation}\label{eqn:BsideOC}
    \BsideOC: \HC_\bullet^-(\dbdg{Y}) \rightarrow  \EuE^B(Y)
\end{equation}

\begin{conj}
\label{conj:B}
The isomorphism \eqref{eqn:BsideOC} is an isomorphism of \vhs. 
Explicitly, this means:
\begin{enumerate}
    \item \label{it:conn} The map $\BsideOC$ intertwines connections; and
    \item \label{it:pair} The map $\BsideOC$ intertwines pairings.
\end{enumerate} 
\end{conj}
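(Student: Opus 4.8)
The plan is to establish the two required compatibilities --- of connections, and of pairings --- separately for each of the two maps comprising $\BsideOC$: Keller's isomorphism \eqref{eq:kellerisomorphism} and the Todd-corrected Hochschild--Kostant--Rosenberg map $\tilde{I}_K$ of \eqref{eqn:itK}. For Keller's isomorphism $\HC_\bullet^-(\dbdg{Y}) \cong \HC_\bullet^-(Y)$ one would work relative to the base $\cM_B$: Getzler's connection $\nabla^{GGM}$ and the analogous ($u^{-1}$-rescaled) Gauss--Manin connection on the negative cyclic homology of the scheme are both produced, by the same recipe, from the action of a lift of $q\partial_q$ on the $\cM_B$-relative cyclic complex, and one checks that Keller's comparison quasi-isomorphism --- being natural --- intertwines them; by the same naturality it intertwines the higher residue pairings, which on both sides are Shklyarov's pairing and are Morita invariant by Proposition \ref{prop:vhscat}. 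I expect this step to be essentially formal, the work lying in setting up the $\cM_B$-relative framework and Keller's comparison at the chain level.

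The real content is the second map, $\tilde{I}_K = \left(\mathsf{td}^{1/2}(Y) \wedge - \right) \circ \tilde{I}_{HKR}$. Its associated graded for the $u$-adic filtration is the modified HKR map $I_K = \mathsf{td}^{1/2}(Y) \wedge I_{HKR}$, which by \cite{Calaque2012,Markarian2008,Ramadoss2008a} intertwines the Mukai pairing and the calculus module structure on $\HH_\bullet(Y)$ with their geometric counterparts on $H^\bullet(\Omega^{-\bullet}Y)$. For the compatibility with connections, the key observation is that $\mathsf{td}^{1/2}(Y)$ is a polynomial in the Chern classes of the (relative) tangent bundle and is therefore covariantly constant for the Gauss--Manin connection; hence wedging with it commutes with $u^{-1}\nabla^{GM}$, and the connection statement for $\tilde{I}_K$ is equivalent to the one for the uncorrected map $\tilde{I}_{HKR}$ --- that is, to the assertion that HKR carries the Gauss--Manin connection on $\HC_\bullet^-(Y)$, and hence (via Keller) the categorical connection $\nabla^{GGM}$, to $u^{-1}$ times the classical Gauss--Manin connection on relative de Rham cohomology. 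For the compatibility with pairings, having matched the connections, one can try to bootstrap from the mod-$u$ statement: the difference of the two sesquilinear pairings on $\HC_\bullet^-(\dbdg{Y})$ obtained by transport through $\tilde{I}_K$ is covariantly constant for the now-common connection and vanishes modulo $u$, and nondegeneracy of the polarization together with an induction on the $u$-adic filtration should force it to vanish. One must also verify that the pairing on de Rham cohomology matched by $I_K$ in the cited references agrees, with the correct signs, with the polarization built into $\EuH^B(Y)$.

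The main obstacle is the Gauss--Manin comparison isolated above: that HKR intertwines the categorical and the classical Gauss--Manin connections. This goes beyond the Hochschild-level theorems of \cite{Calaque2012} and its companions, which see neither the variable $u$ nor the connection; what is needed is a \emph{cyclic} refinement of those theorems --- a ``cyclic formality'' statement, precisely the conjecture ``on the algebraic geometry side'' alluded to in the introduction. The natural route is to rewrite $\nabla^{GGM}$, via Getzler's formula, in terms of the calculus operations --- cap product with the Kodaira--Spencer class, corrected by the Connes differential $B$ --- so that the connection statement would follow from upgrading the calculus-compatibility of $\tilde{I}_K$ from $\HH_\bullet$ to $\HC_\bullet^-$. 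Carrying this out with full control of signs and of the $u$-grading, and ruling out an anomaly specific to the cyclic (as opposed to the Hochschild) setting --- an additional ``wheel'' contribution in the cyclic formality morphism, beyond the Todd class already accounted for at the Hochschild level --- is exactly what remains to be done, and is why the statement is posed as a conjecture rather than proved.
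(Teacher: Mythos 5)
The statement you set out to prove is posed in the paper as a conjecture and is \emph{not} proved there: the main theorem is explicitly contingent on it. There is therefore no proof of record to measure your attempt against, only the paper's surrounding remarks --- and your proposal tracks those remarks closely. You decompose $\BsideOC$ into Keller's comparison \eqref{eq:kellerisomorphism} and the Todd-twisted HKR map \eqref{eqn:itK}; you invoke \cite{Calaque2012,Markarian2008,Ramadoss2008a} for the associated-graded, Hochschild-level compatibilities, exactly as the paper does; you isolate the compatibility of the HKR-type map with the Gauss--Manin connection (a ``cyclic formality'' statement) as the genuinely open point, which is precisely what the paper's remark on \cite{Cattaneo2011} concerns; and you propose to deduce the pairing statement from the connection statement together with the Mukai-pairing results, which is the role the paper assigns to Lemma \ref{lem:pairfix}. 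You are explicit that the cyclic refinement of formality is not established, so you have not overclaimed; to that extent your outline is an accurate map of why this is a conjecture rather than a theorem.

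One caveat concerns your pairing bootstrap. A degree-zero, $\BbK\power{u}$-sesquilinear, covariantly constant pairing is not determined by its reduction modulo $u$ together with covariant constancy: the mod-$u$ reduction only sees the components $\mathsf{Gr}^p_\EuF \otimes \mathsf{Gr}^{-p}_\EuF \to \BbK$, while the components $\EuF^{\ge p} \otimes \EuF^{\ge q} \to \BbK$ with $p+q<0$ live in positive powers of $u$ and are invisible modulo $u$; covariant constancy in $q$ does not kill them. So your proposed ``induction on the $u$-adic filtration'' fails as stated. The mechanism the paper intends is different: one passes through the Rees correspondence of Lemma \ref{lem:realvhs}, so that the whole polarization is encoded by a single $\BbK$-valued pairing on $\EuV$; one uses the Hodge--Tate splitting and skew-adjointness of the residue to show the off-diagonal components vanish at $q=0$, upgrading the associated-graded statement of Markarian--Ramadoss to the fibrewise pairing $(\cdot,\cdot)_0$ (Lemma \ref{lem:pairgrad}); and one then runs an induction on powers of $q$ --- not $u$ --- driven by covariant constancy and nilpotency of the residue (Lemma \ref{lem:pairfix}). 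With that substitution, your reduction of part \eqref{it:pair} to part \eqref{it:conn} agrees with the paper's, in the Hodge--Tate cases it considers.
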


\begin{rmk}
Work of Cattaneo, Felder and Willwacher \cite{Cattaneo2011} (working in the smooth category rather than the category of schemes) goes some way towards verifying part \eqref{it:conn} of Conjecture \ref{conj:B}.
\end{rmk}

\begin{rmk}
Part \eqref{it:pair} of Conjecture \ref{conj:B} is related to C\u{a}ld\u{a}raru's conjecture \cite[Conjecture 5.2]{Caldararu2005}, which says that the associated graded of $\tilde{I}_K$ (namely, $I_K$) intertwines pairings. 
The pairing on $\HH_\bullet(Y)$ is called the \emph{Mukai pairing}. 
C\u{a}ld\u{a}raru's conjecture has been verified by Markarian \cite{Markarian2008} and Ramadoss \cite{Ramadoss2008a}. 
In the cases considered in this paper, the results of Markarian and Ramadoss, combined with part \eqref{it:conn}, suffice to verify part \eqref{it:pair} of the conjecture, by Lemma \ref{lem:pairfix}.
\end{rmk}

\begin{rmk}\label{rmk:Bhodgederham}
    The fact that there is an isomorphism $\BsideOC$ of underlying $\BbK_B\power{u} $ modules
    implies, by reduction to the commutative case, that the non-commutative
    Hodge de Rham spectral sequence degenerates. 
    In particular, the pre-\vhs{} structure on $\HC_\bullet^-(\dbdg{Y})$ is actually a \vhs.
\end{rmk}

\subsection{Proof of Theorem \ref{thm:main}}

We prove Theorem \ref{thm:main}. 
Observe the following diagram:
\[ \xymatrix{\HC_\bullet^-(\twsplit \EuF(X)) \ar[r] \ar[d]^{\widetilde{\EuO\EuC}^-} & \psi^* \HC_\bullet^-(\dbdg{Y}) \ar[d]^{\BsideOC} \\
\EuH^A(X) & \psi^* \EuH^B(Y).} \]
The top arrow is the isomorphism induced by the quasi-equivalence $\twsplit \EuF(X) \cong \dbdg{Y}$ (using the Morita invariance from Proposition \ref{prop:vhscat}).
The left vertical arrow is the composition of $\widetilde{\EuO\EuC}^-$ with the isomorphism $\HC_\bullet^-(\twsplit \EuF(X)) \cong \HC_\bullet^-(\EuF(X))$, again using Morita invariance. 
We observe that $\twsplit \EuF(X)$ is smooth, because $\dbdg{Y}$ is (this follows from the fact that $Y$ is smooth). 
The left vertical arrow is a morphism of polarized pre-\vhs{} by Theorem \ref{thm:cycoc}; and since $\EuF(X)$ is also non-degenerate by hypothesis, it is actually an isomorphism by Theorem \ref{thm:ociso}. 
The right vertical arrow is an isomorphism of $\BbK_A\power{u}$-modules by \cite{Keller1998a,Weibel1997}; the isomorphism respects the polarized \vhs{} structure by Conjecture \ref{conj:B}. 

\subsection{Application: Calabi-Yau hypersurfaces in projective space}
\label{subsec:app}

We consider the mirror pair $(X^n,Y^n)$, where:
\[ X^n := \left\{ \sum_{j=1}^n z_j^n = 0\right\} \subset \mathbb{CP}^{n-1};\]
and $Y^n := \tilde{Y}^n/G$, where
\[ \tilde{Y}^n := \left\{ -z_1\ldots z_n + q \sum_{j=1}^n z_j^n = 0\right\} \subset \mathbb{P}^{n-1}_{\BbK_B}\]
and 
\[ G := \{(\zeta_1,\ldots,\zeta_n): \zeta_j^n =1, \zeta_1\ldots\zeta_n = 1\} /(\zeta,\ldots,\zeta)\]
acts on $\tilde{Y}^n$ by multiplying the coordinates $z_j$ by $n$th roots of unity. 

We recall the following:

\begin{thm}\emph{(\cite[Theorem 1.8]{Sheridan2015})}
\label{thm:hmscy}
$X^n$ and $Y^n$ are homologically mirror. 
Furthermore, the mirror map
\[ \psi^*: \BbK_B \to \BbK_A \]
satisfies $\psi^*(q) = \pm Q + \mathcal{O}(Q^2)$: i.e., the leading-order term is $\pm 1$.
\end{thm}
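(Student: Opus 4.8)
This theorem is quoted from \cite[Theorem 1.8]{Sheridan2015}; I outline how its proof goes. On the symplectic side the plan is to work with the relative Fukaya category $\EuF(X^n)$ of the pair $(X^n,D)$, where $D = X^n \cap \{z_1\cdots z_n = 0\}$ is the simple normal crossings divisor cut out by the coordinate hyperplanes. The complement $X^n\setminus D$ carries an exact K\"ahler structure, inside which one distinguishes an immersed Lagrangian sphere $\mathbb{L}$, assembled from Lagrangian pieces in a decomposition of $X^n\setminus D$ into copies of the $n$-dimensional pair of pants. The first task is to compute $HF^\bullet(\mathbb{L},\mathbb{L})$ and identify it, as a graded algebra, with an exterior algebra $\Lambda^\bullet V$ on an $n$-dimensional graded vector space $V$, with structure constants defined over $\C$ at leading order and corrected by terms in $\BbK_A$. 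On the algebro-geometric side, $Y^n$ is a crepant resolution of the Greene--Plesser orbifold $\tilde{Y}^n/G$ (hence derived-equivalent to it), and via Orlov's Landau--Ginzburg/Calabi--Yau correspondence together with Koszul duality against the skyscraper at the orbifold point one identifies the endomorphism $\ainf$-algebra of a suitable generating object of $\dbdg{Y^n}$ with the same exterior algebra $\Lambda^\bullet V$, now with structure constants in $\BbK_B$.

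Next, a Hochschild-cohomology computation shows that $\Z$-graded cyclic $\ainf$-structures on $\Lambda^\bullet V$ deforming the exterior product are classified, up to gauge equivalence, by a ``potential'' $W \in \C\power{z_1,\dots,z_n}$ (vanishing to sufficiently high order at the origin), taken modulo formal coordinate changes fixing the origin. The relative Fukaya structure therefore produces $W_A \in \BbK_A\power{z_1,\dots,z_n}$, the $B$-side produces $W_B \in \BbK_B\power{z_1,\dots,z_n}$, and homological mirror symmetry for the two generating subcategories reduces to finding an isomorphism $\psi^*\colon\BbK_B\to\BbK_A$ and a formal coordinate change carrying $\psi^*W_B$ to $W_A$. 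To upgrade this to the quasi-equivalence \eqref{eq:hms} of the full categories one invokes split-generation: $\mathbb{L}$ split-generates $\EuF(X^n)$ by Abouzaid's geometric criterion \cite{Abouzaid2010a}, whose hypothesis is verified in \cite{Sheridan2015}, while the chosen object generates $\dbdg{Y^n}$ by standard arguments.

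To read off the mirror map one pins down the low-order terms of $W_A$. Since $\mathbb{L}$ can bound only polygons with corners at its self-intersection points, the $Q^0$-part of $W_A$ counts rigid holomorphic polygons contained in $X^n\setminus D$, while the $Q^1$-part is a weighted count of rigid polygons meeting $D$ exactly once --- each contributing a factor $Q^{\omega(u)-\alpha(\partial u)} = Q^1$, since it meets a single hyperplane section in a single point. A neck-stretching argument reduces both counts to explicit local models on the pair of pants, where the minimal polygons are enumerated directly and the relevant count is $\pm 1$. On the other side the $q$-dependence of $W_B$ is, to leading order, read straight off the defining polynomial $-z_1\cdots z_n + q\sum_{j} z_j^n$, in which $q$ occurs linearly with coefficient $1$. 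Matching $\psi^*W_B$ with $W_A$ at orders $Q^0$ and $Q^1$ --- the coordinate change being already pinned down at order $Q^0$ --- then forces $\psi^*(q) = \pm Q + \mathcal{O}(Q^2)$, the sign being exactly the ambiguity in the signed polygon count.

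I expect the main obstacle to be the order-$Q$ computation of $W_A$ with correct signs: this requires a gluing theorem expressing polygon counts in $X^n$ through the local pair-of-pants models, the explicit enumeration of rigid polygons in those models, and a coherent choice of orientations and spin structures --- and it is precisely the sign bookkeeping that prevents removing the residual $\pm$. A parallel difficulty on the $B$-side is arranging the Orlov equivalence and Koszul duality for the singular orbifold and its resolution compatibly with the $\Z$-gradings and the cyclic structures, so that $W_B$ is genuinely computed from the defining equation rather than merely known abstractly.
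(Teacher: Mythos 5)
The paper does not prove this statement: it is imported verbatim from \cite[Theorem 1.8]{Sheridan2015}, and the only in-text discussion of its proof is the informal sketch in Remark \ref{rmk:waffle} (match the categories at order $Q^0$, then at order $Q^1$, then invoke the one-dimensional moduli of $\ainf$ structures). Your outline is an accurate account of the strategy of the cited proof and is consistent with that sketch, so there is nothing to correct here --- just be aware that the deformation-theoretic step is really a first-order versality theorem for graded $\ainf$ structures on the exterior algebra (proved via a Hochschild cohomology computation) rather than literally a classification by potentials modulo coordinate changes, the potential description being how the $B$-side enters via Orlov's LG/CY correspondence.
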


To be more precise about the statement of Theorem \ref{thm:hmscy}, we must be explicit about which version of the Fukaya category we use, and also which version of $\dbdg{Y^n}$ we use. 
On the Fukaya side, we consider the relative Fukaya category $\EuF(X^n,D)$, where $D \subset X^n$ is the union of the $n$ coordinate hyperplanes. 
This relative Fukaya category was constructed in \cite{Sheridan2015}: the fact that it has all of the additional properties enumerated in \S  \ref{sec:fuk} will be proven in \cite{Perutz2015a,Ganatra2015a}. 

On the coherent sheaves side, we consider the bounded derived category of $G$-equivariant coherent sheaves, $D^b_{\mathsf{dg}}Coh^G(\tilde{Y}^n)$. 
Working with the derived category of $G$-equivariant coherent sheaves requires some modifications to our general setup (in particular to Conjecture \ref{conj:B}). 
One can circumvent this issue to some extent, and work on the smooth scheme $\tilde{Y}^n$ rather than on $Y^n$, but let us ignore this point and act as if $Y^n$ itself were smooth.

Theorem \ref{thm:hmscy} raises three natural questions:
\begin{enumerate}
\item \label{it:mir} Is the mirror map $\psi$ that appears in Theorem \ref{thm:hmscy} the same as that appearing in Hodge-theoretic mirror symmetry, which is defined in terms of solutions to the Picard-Fuchs equation? In particular, the mirror map $\psi^*$ that appears in Theorem \ref{thm:hmscy} was not determined in \cite{Sheridan2015}, beyond the first term (see Remark \ref{rmk:waffle} for more on this). 
\item \label{it:odgems} Does Theorem \ref{thm:hmscy} imply Hodge-theoretic or enumerative mirror symmetry?
\item \label{it:vol} The Fukaya category $\EuF(X^n)$ comes with a natural Calabi-Yau structure: the simplest manifestation of this is the Poincar\'{e} duality pairing on Floer cohomology:
\[ \mathrm{Hom}^\bullet(K,L) \cong \mathrm{Hom}^{n-\bullet}(L,K)^\vee.\]
Under homological mirror symmetry, this corresponds to a Calabi-Yau structure on $\dbdg{Y^n}$: these are in one-to-one correspondence with relative holomorphic volume forms, i.e., non-vanishing sections of the canonical bundle $\Omega \in H^0(K_{Y})$ (in particular, Poincar\'{e} duality should correspond to the isomorphism
\[ \mathrm{Ext}^\bullet(\mathcal{E},\mathcal{F}) \cong \mathrm{Ext}^{n-\bullet}(\mathcal{F},\mathcal{E} \otimes K_Y)^\vee \cong \mathrm{Ext}^{n-\bullet}(\mathcal{F},\mathcal{E})^\vee,\]
where the first isomorphism is Serre duality, and the second is given by the isomorphism $\mathcal{O}_Y \cong K_Y$ corresponding to $\Omega$). 
Of course, we have infinitely many possible choices for the holomorphic volume form $\Omega$: any choice can be multiplied by a non-zero element of $\BbK_B$. 
This raises the final question: to which volume form $\Omega$ does the natural Calabi-Yau structure on $\EuF(X)$ correspond, under homological mirror symmetry?
\end{enumerate}

In light of our results (which, we recall, rely on Conjecture \ref{conj:B}, and modifications to deal with $G$-equivariant coherent sheaves), we can answer these questions: the answers to \eqref{it:mir} and \eqref{it:odgems} are `yes', and the answer to \eqref{it:vol} is `the Hodge-theoretically normalized volume form' -- see \S  \ref{sec:cystructures}, particularly Theorem \ref{thm:cystructures} for details on the latter point.

In particular, because $Y^n$ is maximally unipotent, we can apply Theorem \ref{thm:main} to the homological mirror symmetry quasi-equivalence of Theorem \ref{thm:hmscy}: so we obtain a new proof of Hodge-theoretic mirror symmetry for the mirror pairs $(X^n,Y^n)$. 
By Theorem \ref{thm:hodgemsinfo}, this allows us to compute the matrix of quantum cup product with $[\omega]$ on $X^n$, which of course contains information about certain three-point genus-zero Gromov-Witten invariants of $X^n$. 
In particular, in the case of the quintic $X^5$, we obtain a new proof that the curve counts predicted in \cite{Candelas1991} are correct. 
We remark that the complex scalar $c_1$ alluded to in \S\ref{subsec:hodgems} is normalized up to sign by the computation of the leading-order term of the mirror map in Theorem \ref{thm:hmscy}: so rather than having to input the leading-order term $2875$ in the Yukawa coupling in order to normalize $c_1$, we only need to input its sign $+1$.

\begin{rmk}
\label{rmk:waffle}
Let us make a philosophical remark about the proof of Hodge-theoretic mirror symmetry for the quintic that we just outlined. 
The previous proofs went by computing Gromov-Witten invariants directly, for example by equivariant localization. 
When proving directly that two \vhs{} are isomorphic, one could imagine that they match up to order one million, but fail to match up to order one million and one\footnote{We thank Rahul Pandharipande for pointing this out to us. One can see this, for example, from Proposition \ref{prop:cateq}: the matrix $A(q)$ that appears as part of an object of $\EuD_n$ could be altered at arbitrarily high order in $q$, and the corresponding object of $\EuC_n$ is still a perfectly good \vhs{}.}. 
Thus one needs to keep track of curve counts of all orders. 
The proof via homological mirror symmetry is of a different nature. 
Namely, one first proves that the categories match up to zeroth order in $Q$, then one proves that they match up to first order in $Q$ (although each comparison involves infinitely many $A_\infty$ structure maps, one only needs to compute a finite number of them to determine the structure up to $A_\infty$ quasi-equivalence). 
Then, one uses the fact that there is a one-dimensional `moduli space of $\ainf$ structures on the category'. 
This means we have matched up the origins in the respective moduli spaces of $A_\infty$ structures (i.e., the zeroth order categories), and we have also matched up the directions in which both categories are deforming (the first order categories). 
It then follows by the inverse function theorem that the families of categories are related by some formal diffeomorphism, which is the mirror map. 
This mirror map may appear to be undetermined: however, because homological mirror symmetry implies Hodge-theoretic mirror symmetry, this mirror map is uniquely determined by the fact that it must match up the canonical coordinates on both sides.
\end{rmk}

\begin{rmk}
    The version of Hodge-theoretic mirror symmetry that we extract from homological mirror symmetry is not the optimal result: one would ultimately hope to prove an isomorphism between the big $A$-model \vhs{} and the big $B$-model \vhs{}, which would imply an isomorphism of the associated Frobenius manifolds (see \cite{Barannikov1998}).
That should also be possible by extending the techniques presented in this paper to include `bulk deformations', although we have not carried that out. 
The key point is that HMS implies that $\EuC\EuO$ is an isomorphism (see Theorem \ref{thm:ociso2}): and $\EuC\EuO$ extends to an $L_\infty$ morphism, so the universal family of deformations of $\EuF(X)$ gets identified with the bulk deformed Fukaya category.
\end{rmk}

\subsection{Acknowledgments}

The authors are grateful to David Ben-Zvi, Chris Brav, Tom Coates, Rahul Pandharipande, Yongbin Ruan and Paul Seidel for very helpful
conversations. S.G. would like to thank the Institut Mittag-Leffler for
hospitality while working on this project.  N.S. is grateful to the Instituto
Superior T\'{e}cnico and to ETH Z\"{u}rich for hospitality while working on
this project.

\section{Variations of semi-infinite Hodge structures over formal punctured discs}
\label{sec:vhs}

In this section, we review the definition of a variation of semi-infinite Hodge structures (which we abbreviate `\vhs'), following Barannikov \cite{Barannikov2001}. 

\subsection{Definitions}
\label{subsec:vhsdef}

Variations of semi-infinite Hodge structures were introduced in \cite{Barannikov2001}. 
We recall a particular case of the definition here, following \cite[\S  2]{Sheridan2015a}. 
In this paper we will only consider \vhs{} over formal punctured discs, which we now define:

\begin{defn} 
\label{defn:coord}
Let $R$ be a complete discrete valuation ring with maximal ideal $\mathfrak{m}$, residue field $R/\mathfrak{m} = \C$, field of fractions $\BbK$, and valuation $v: \BbK^\times \to \Z$. 
We denote $\cM := \spec \BbK$, and we call such $\cM$ a \emph{formal punctured disc}. 
We denote $\mathcal{O}_\cM := \BbK$ and $T\cM := \deriv_\C \BbK$.
\end{defn}

\begin{defn}
A \emph{coordinate} on $\cM$ is an element $q \in R$ with $v(q) = 1$ (also known as a `uniformiser'). 
A choice of coordinate $q$ determines an isomorphism $R \cong \C\power{q}$, and similarly $\BbK \cong \C \laurents{q}$: i.e., an isomorphism of $\cM$ with the standard formal punctured disc. 
A coordinate also determines an isomorphism $T\cM \cong \BbK \cdot \partial_q$. 
\end{defn}

We define $\BbK \power{u}$ to be the completion of $\BbK[u]$ in the category of graded algebras, where $u$ has degree $2$. 
Note that the completion has no effect: $\BbK \power{u} \cong \BbK[u]$. 
Nevertheless we continue to use the notation $\BbK\power{u}$, as it reminds us that any graded $\BbK \power{u}$-module will always be completed with respect to a filtration by powers of $u$ in the category of graded modules, by convention (compare \cite[\S  3.6]{Sheridan2015}). 
Similarly, we denote the graded ring of formal Laurent series in $u$ by $\BbK \laurents{u}\cong \BbK[u,u^{-1}]$.
For any $f \in \BbK \power{u}$ or $\BbK \laurents{u}$, we denote
\[ f^\star(u) := f(-u).\]

\begin{defn}
\label{defn:prevhs}
Let $\cM := \spec \BbK$ be a formal punctured disc.
A \emph{$\Z$-graded unpolarized pre-\vhs} over $\cM$ is a pair $\EuH := (\EuE,\nabla)$, where:
\begin{itemize}
\item $\EuE$ is a graded $\BbK \power{u}$-module.
\item $\nabla$ is a flat connection\footnote{More precisely, there is a map $u\nabla: T\cM \otimes_\C \EuE \to \EuE$, such that $u\nabla_X s$ is $\BbK$-linear in $X$, additive in $s$, satisfies the Leibniz rule
\[ u\nabla_X(f \cdot s) = uX(f) \cdot s + f \cdot u\nabla_X s\]
for $f \in \BbK \power{u}$, and 
\[ [u\nabla_X, u\nabla_Y] = u^2\nabla_{[X,Y]}\]
for all $X,Y \in T\cM$.} 
\[ \nabla: T\cM \otimes \EuE \to u^{-1} \EuE, \]
of degree $0$.
\end{itemize}
\end{defn}

\begin{defn}
\label{defn:pol}
A \emph{polarization} for a pre-\vhs{} $\EuH=(\EuE,\nabla)$ is a pairing
\[ (\cdot,\cdot):\EuE \times \EuE  \to \BbK \power{u}\]
of degree $0$, satisfying the following conditions:
\begin{itemize}
\item $(\cdot,\cdot)$ is {\em sesquilinear}, i.e., it is additive in both inputs and
\[(f\cdot s_1,s_2) = (s_1,f^\star \cdot s_2) = f\cdot (s_1,s_2)\]
 for $f \in \BbK \power{u}$.
\item $(\cdot,\cdot)$ is covariantly constant with respect to $\nabla$, i.e. 
\[X(s_1,s_2) = (\nabla_X s_1,s_2) + (s_1,\nabla_X s_2).\]
\item The pairing is graded symmetric: precisely, there exists $n \in \Z/2$ (called the `dimension') such that
\[(s_1,s_2) = (-1)^{n+\mathsf{deg}(s_j)} (s_2,s_1)^\star,\]
(we observe that the pairing vanishes unless $\mathsf{deg}(s_1) = -\mathsf{deg}(s_2)$ by definition, hence there is no ambiguity in the choice of $j$ in the exponent). 
\end{itemize}
\end{defn}

\begin{defn}
\label{defn:vhs}
An unpolarized \vhs{} is an unpolarized pre-\vhs{} such that the $\BbK\power{u}$-module $\EuE$ is finitely-generated and free. 
\end{defn}

\begin{defn}
\label{defn:polvhs}
A polarization for a \vhs{} is a polarization for the underlying pre-\vhs{}, with the additional property that the pairing of $\BbK$-modules
\[ \EuE /u\EuE  \otimes_\BbK \EuE /u\EuE  \to \BbK\]
induced by $(\cdot,\cdot)$ is non-degenerate.
\end{defn}

\begin{lem}
\label{lem:realvhs}
Let $\cM$ be a formal punctured disc.
Then a $\Z$-graded unpolarized \vhs{} $\EuH = (\EuE,\nabla)$ over $\cM$ is equivalent to the following data:
\begin{itemize} 
\item A free, finite-rank, $\Z/2$-graded $\BbK$-module $\EuV \cong \EuV_{ev} \oplus \EuV_{odd}$.
\item A flat connection $\nabla$ on each $\EuV_\sigma$.
\item Decreasing filtrations 
\[ \ldots \supset \EuF^{\ge p} \EuV_{ev} \supset \EuF^{\ge p+1} \EuV_{ev} \supset \ldots\]
and
\[ \ldots \supset \EuF^{\ge p-\frac{1}{2}} \EuV_{odd} \supset \EuF^{\ge p + \frac{1}{2}} \EuV_{odd} \supset \ldots \]
which are called the \emph{Hodge filtrations}, and satisfy \emph{Griffiths transversality}:
\[ \nabla_v \EuF^{\ge p} \subset \EuF^{\ge p-1}.\]
\end{itemize}
An $n$-dimensional polarization on $\EuH$ is equivalent to covariantly constant bilinear pairings 
\[ (\cdot,\cdot):\EuV_\sigma \otimes \EuV_\sigma \to \BbK\]
for both $\sigma \in \Z/2$, such that 
\[ (\alpha,\beta) = (-1)^n (\beta,\alpha),\]
and with the property that
\[ (\EuF^{\ge p} \EuV_\sigma,\EuF^{\ge q} \EuV_\sigma) = 0 \mbox{ if $p+q > 0$}, \]
and the induced pairing
\[ (\cdot,\cdot): \mathsf{Gr}^p_{\EuF} \EuV_\sigma \otimes \mathsf{Gr}^{-p}_{\EuF} \EuV_\sigma \to \BbK\]
is non-degenerate, for all $p$.
\end{lem}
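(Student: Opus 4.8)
The plan is to invoke the Rees correspondence concretely. First I would set up the dictionary between a graded $\BbK\power{u}$-module $\EuE$ and a filtered $\BbK$-module: given $\EuE$, decompose it into even and odd degree parts $\EuE_{ev}$ and $\EuE_{odd}$ (using that $u$ has degree $2$, so multiplication by $u$ preserves parity), and set $\EuV_\sigma := \EuE_\sigma / (u-1)\EuE_\sigma$, which is a $\Z/2$-graded free $\BbK$-module. The Hodge filtration is recovered by $\EuF^{\ge p}\EuV_{ev} := \mathrm{image}(u^{p}\EuE_{ev}^{(\ge 0)} \to \EuV_{ev})$ in a suitable sense — more precisely, $\EuF^{\ge p}$ is the image of $\{u^k \xi : \xi \in \EuE \text{ of degree } 2(p+k)\text{ for some }k\ge 0\}$ — and the half-integer shift on $\EuV_{odd}$ is forced by the fact that odd-degree elements live in $u^{k}\cdot(\text{degree } 2k+1)$, i.e.\ at ``level $p+\tfrac12$''. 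Conversely, given the filtered data one forms the Rees module $\EuE := \bigoplus_{p} u^{p}\EuF^{\ge -p}$ (with appropriate reindexing so that the internal degree is correct), which is automatically a free $\BbK\power{u}$-module since the filtration is by $\BbK$-submodules of a finite free module and is eventually the whole space resp.\ zero. I would check that these two constructions are mutually inverse; this is the ``routine but bookkeeping-heavy'' part, where all the care goes into the degree conventions so that $u$ of degree $2$ matches the integer steps of the filtration (and the half-integer steps on the odd part).

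Next I would translate the connection. The datum $\nabla: T\cM \otimes \EuE \to u^{-1}\EuE$ of degree $0$ restricts on each parity component; tensoring down by $u \mapsto 1$ gives a connection $\nabla$ on $\EuV_\sigma$. Flatness is preserved because the curvature identity $[u\nabla_X, u\nabla_Y] = u^2\nabla_{[X,Y]}$ becomes $[\nabla_X,\nabla_Y] = \nabla_{[X,Y]}$ after specialization. Griffiths transversality is exactly the statement that $\nabla$, which a priori lands in $u^{-1}\EuE$, shifts the $u$-power (hence the filtration level $p$) down by at most one: $u\nabla_X(\EuF^{\ge p}) \subset \EuF^{\ge p-1}\cdot u^{0}$ unwinds to $\nabla_X \EuF^{\ge p}\subset \EuF^{\ge p-1}$. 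Conversely, a filtered flat connection satisfying Griffiths transversality reassembles into a degree-$0$ map $T\cM\otimes\EuE \to u^{-1}\EuE$ on the Rees module. Here the base $\cM$ has trivial grading, which is what makes ``degree $0$'' and ``$\Z/2$-graded'' on $\EuV$ compatible without extra twists.

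Finally I would handle the polarization. A sesquilinear pairing $(\cdot,\cdot):\EuE\times\EuE\to\BbK\power{u}$ of degree $0$ must, by sesquilinearity and the parity grading, restrict to pairings $\EuE_\sigma \times \EuE_\sigma \to \BbK\power{u}$ (cross terms between even and odd would have to be odd, hence zero), and $(s_1,s_2)$ is homogeneous of $u$-degree determined by $\mathrm{deg}(s_1)=-\mathrm{deg}(s_2)$; specializing $u\mapsto 1$ yields $(\cdot,\cdot):\EuV_\sigma\otimes\EuV_\sigma\to\BbK$. Covariant constancy is preserved under specialization just as for the connection. The graded-symmetry condition $(s_1,s_2)=(-1)^{n+\mathsf{deg}(s_j)}(s_2,s_1)^\star$ becomes, after tracking how $f\mapsto f^\star$ acts as a sign on the $u$-power and recombining the parity sign, simply $(\alpha,\beta)=(-1)^n(\beta,\alpha)$ on each $\EuV_\sigma$. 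For the filtration compatibility: an element of $\EuF^{\ge p}$ is represented by something of $u$-weight $\ge p$ (in the normalization above), so pairing $\EuF^{\ge p}$ with $\EuF^{\ge q}$ lands in $u^{p+q}\BbK\power{u}$; requiring the pairing to take values in $\BbK\power{u}$ (nonnegative $u$-powers only) forces vanishing when $p+q>0$ is \emph{not} the condition — rather, one shows the pairing on $\EuE$ landing in $\BbK\power{u}$ together with the homogeneity forces $(\EuF^{\ge p},\EuF^{\ge q})=0$ for $p+q>0$ and induces the pairing on $\mathsf{Gr}^p\otimes\mathsf{Gr}^{-p}$; and the non-degeneracy of the induced pairing on $\EuE/u\EuE$ (Definition~\ref{defn:polvhs}) is, after unwinding the Rees identification of $\EuE/u\EuE$ with $\bigoplus_p \mathsf{Gr}^p_{\EuF}\EuV$, precisely the non-degeneracy of each $\mathsf{Gr}^p\otimes\mathsf{Gr}^{-p}\to\BbK$.

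The main obstacle is purely one of conventions: getting the degree bookkeeping exactly right so that the degree-$2$ variable $u$ produces \emph{integer} steps of the Hodge filtration on the even part and \emph{half-integer} steps on the odd part, and so that the ``$u^{-1}\EuE$'' target of $\nabla$ and the ``$\BbK\power{u}$'' target of the pairing translate to Griffiths transversality and to the $p+q>0$ vanishing respectively, with all signs from $f\mapsto f^\star$ correctly absorbed into the $(-1)^n$. None of the individual verifications is deep, but a sign or an off-by-$\tfrac12$ error anywhere breaks the correspondence, so I would state the normalization of the Rees functor explicitly at the outset and then check each of the three structures (module, connection, pairing) against it in turn. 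All of this follows \cite[\S~4]{Barannikov2001}; the content of the lemma is the precise matching of our Definitions~\ref{defn:vhs}--\ref{defn:polvhs} with the classical filtered picture.
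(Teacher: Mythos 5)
Your overall route is the paper's: both run the Rees correspondence, and your quotient $\EuE_\sigma/(u-1)\EuE_\sigma$ is the same space as the paper's $\EuV_{[k]} := \widetilde{\EuE}_k$, the degree-$k$ part of $\widetilde{\EuE} := \EuE \otimes_{\BbK\power{u}}\BbK\laurents{u}$ with degrees identified modulo $2$ via the periodicity isomorphisms $u\cdot\colon\widetilde{\EuE}_k\to\widetilde{\EuE}_{k+2}$. Your translations of the underlying module, the Hodge filtration (including the half-integer shift on the odd part), the connection, Griffiths transversality, and the non-degeneracy statement all match the paper's.

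The one step that fails as written is the claim that ``specializing $u\mapsto 1$ yields $(\cdot,\cdot):\EuV_\sigma\otimes\EuV_\sigma\to\BbK$.'' The pairing is \emph{sesquilinear}: $(s_1,f\cdot s_2)=f^\star\cdot(s_1,s_2)$ with $f^\star(u)=f(-u)$, so it does not descend to the quotient by $(u-1)$ in the second variable: $(s_1,(u-1)t)=(-u-1)(s_1,t)\equiv -2(s_1,t) \pmod{u-1}$, which is not zero. Equivalently, if you define the pairing of $\alpha,\beta\in\EuV_\sigma$ by choosing homogeneous representatives $\tilde{\alpha}\in\widetilde{\EuE}_k$, $\tilde{\beta}\in\widetilde{\EuE}_{-k}$ and taking the degree-zero element $(\tilde{\alpha},\tilde{\beta})\in\BbK$, the answer changes sign under $(\tilde{\alpha},\tilde{\beta})\mapsto(u\tilde{\alpha},u^{-1}\tilde{\beta})$, since $(u\tilde{\alpha},u^{-1}\tilde{\beta})=u\cdot(-u^{-1})(\tilde{\alpha},\tilde{\beta})=-(\tilde{\alpha},\tilde{\beta})$; so it is not well-defined on $\EuV_\sigma$. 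The paper's fix is a degree-dependent prefactor: set $(\alpha,\beta)_\EuV:=\iii^{-k}(\tilde{\alpha},\tilde{\beta})$. Sesquilinearity then makes this compatible with the periodicity isomorphisms, and the same prefactor converts the graded symmetry $(s_1,s_2)=(-1)^{n+\mathsf{deg}(s_j)}(s_2,s_1)^\star$ into the clean $(\alpha,\beta)=(-1)^n(\beta,\alpha)$ (without it you pick up an extra $(-1)^k$). You gesture at absorbing the signs from $f\mapsto f^\star$ into the $(-1)^n$, but the issue is prior to symmetry: without the prefactor the pairing is not even well-defined. Everything downstream in your outline --- the $u^{\ge p+q}$ estimate giving vanishing for $p+q>0$, and the identification $(\EuE/u\EuE)_k\cong\mathsf{Gr}^{-k/2}_{\EuF}\EuV_{[k]}$ giving non-degeneracy --- is correct once the pairing is defined this way.
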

\begin{proof}
We give the construction in one direction (the reverse construction is clear). 
Let $\EuH$ be a $\Z$-graded \vhs, so $\widetilde{\EuE} := \EuE \otimes_{\BbK \power{u} } \BbK \laurents{u}$ is a free $\Z$-graded $\BbK[u,u^{-1}]$-module.
We have the \emph{periodicity isomorphisms} 
\[ \widetilde{\EuE}_k \overset{u \cdot}{\to} \widetilde{\EuE}_{k+2},\]
 so we can define 
\[ \EuV_{[k]} := \widetilde{\EuE}_k,\]
where different choices of $k$ mod $2$ are identified via the periodicity isomorphisms. 
Observe that the connection has degree $0$ and is $u$-linear, hence it descends to a connection on $\EuV$.

We define the Hodge filtrations by
\[ \EuF^{\ge p-\frac{k}{2}} \EuV_{[k]} :=\left( u^{\ge p} \cdot \EuE \right)_k \subset \widetilde{\EuE}_k. \]
It is easy to check that this respects the periodicity isomorphisms, hence is well-defined. 
Griffiths transversality follows from the fact that $\nabla$ maps $\EuE \to u^{-1} \EuE$.

We define the pairing of $\alpha, \beta \in \EuV_\sigma$ by choosing representatives $\tilde{\alpha} \in \widetilde{\EuE}_k$, $\tilde{\beta} \in \widetilde{\EuE}_{-k}$, then setting
\[ (\alpha,\beta)_\EuV := \iii^{-k} \left(\tilde{\alpha},\tilde{\beta}\right)_\EuE,\]
where $\iii := \sqrt{-1}$.
Observe that the degree assumptions ensure that the output lies in the degree-$0$ part of $\BbK \laurents{u}$, which is $\BbK$. 
The prefactor ensures that the pairing respects the periodicity isomorphisms, by sesquilinearity of the pairing on $\widetilde{\EuE}$, and also that it has the appropriate symmetry property, by symmetry of the pairing on $\widetilde{\EuE}$.

The pairing $(\cdot,\cdot)_\EuV$ is covariantly constant, by the corresponding property for $(\cdot,\cdot)_\EuE$. 
If $\alpha \in \EuF^{\ge p}$ and $\beta \in \EuF^{\ge q}$, then $\tilde{\alpha} \in u^{\ge p+ k/2} \EuE_k$ and $\tilde{\beta} \in u^{\ge q - k/2} \EuE_{-k}$, so their pairing lies in $u^{\ge p+q} \cdot \BbK\power{u}$. 
In particular, if $p+q > 0$ then the constant coefficient vanishes, so $(\alpha,\beta)_\EuV = 0$. 

We observe that there is a natural isomorphism 
\[ (\EuE/u\EuE)_k \cong \mathsf{Gr}_\EuF^{ -\frac{k}{2}} \EuV_{[k]}.\]
Therefore, the non-degeneracy property of $(\cdot,\cdot)_\EuV$ follows from that of $(\cdot,\cdot)_\EuE$.
\end{proof}

\begin{rmk}
\label{rmk:shiftV}
It is more standard to allow the pairing $(\cdot,\cdot)$ to have a non-zero degree, and to consider shifts of the grading. 
We prefer to shift whatever \vhs{} we are considering, so that the pairing has degree $0$ (the higher residue pairing always has degree $0$ with respect to the standard grading on cyclic homology).
\end{rmk}

\begin{defn}
\label{defn:oppfilt}
Given a $\Z$-graded \vhs{} over a formal punctured disc, an \emph{opposite filtration} (or a \emph{splitting for the Hodge filtration}) is a pair of increasing filtrations
\[ \ldots \subset \EuW_{\le p} \EuV_{ev} \subset \EuW_{\le p+1} \EuV_{ev} \subset \ldots\]
and
\[ \ldots \subset \EuW_{\le p-\frac{1}{2}} \EuV_{odd} \subset \EuW_{\le p + \frac{1}{2}} \EuV_{odd} \subset \ldots \]
preserved by $\nabla$, and such that the inclusion maps induce isomorphisms:
\begin{equation}
\label{eqn:FWopp}
 \EuF^{\ge p}\EuV_\sigma \oplus \EuW_{\le p-1} \EuV_\sigma \overset{\sim}{\to} \EuV_\sigma
\end{equation}
for all $p \in \Z + \frac{\sigma}{2}$. 
\end{defn}

An opposite filtration determines isomorphisms
\begin{equation}
\label{eqn:splitting}
\EuV_\sigma \cong \bigoplus_p \EuV_{\sigma}^{(p)} \cong  \mathsf{Gr}_{\EuF} \EuV_\sigma \cong \mathsf{Gr}^{\EuW} \EuV_\sigma,
\end{equation}
where
\[ \EuV^{(p)}_\sigma := \EuF^{\ge p} \cap \EuW_{\le p} \EuV_\sigma,\]
and the isomorphisms are induced by the inclusions $\EuV_\sigma^{(p)} \hookrightarrow \EuV_\sigma$.

\subsection{Monodromy weight filtration}
\label{subsec:monwf}

Let $\EuH$ be a $\Z$-graded polarized \vhs{} over a formal punctured disc $\cM$, which is equivalent to the data $(\EuV,\EuF^{\ge \bullet},\nabla,(\cdot,\cdot))$ described in Lemma \ref{lem:realvhs}. 

Assume that $(\EuV,\nabla)$ has a regular singular point at $q=0$, whose monodromy $T$ is unipotent of order $n$: $(T-I)^{n+1} = 0$.
Let $\widetilde{\EuV} \subset \EuV$ denote the Deligne lattice (i.e., the canonical extension over $0$, a free $R$-module, where $R \cong \C\power{q} \subset \BbK$; see e.g. \cite[\S  II.2.e]{Sabbah2007}), $\widetilde{\EuV}_0:= \widetilde{\EuV}/q\widetilde{\EuV}$ (the fibre at $0$ of the canonical extension, a $\C$-vector space), and define the associated monodromy weight filtrations 
\[ 0 \subset MW_{\le -n}\, \widetilde{\EuV}_0 \subset \ldots \subset MW_{\le n} \, \widetilde{\EuV}_0 = \widetilde{\EuV}_0\]
(using the nilpotent endomorphism which is the residue of the connection) and similarly $\EuM\EuW_{\le p} \EuV$ (using the nilpotent endomorphism which is the log monodromy). 
We define the increasing filtration
\begin{equation}
\label{eqn:monweightk}
 \EuW_{\le p} \EuV_{\sigma} := \EuM\EuW_{\le 2p} \EuV_\sigma,
\end{equation}
where $p \in \Z + \frac{\sigma}{2}$ (as in Definition \ref{defn:oppfilt}); similarly we define the filtration $W_{\le p} \, \widetilde{\EuV}_0 := MW_{\le 2p} \widetilde{\EuV}_0$.

Suppose that the filtration $\EuW_{\le p}$ splits the Hodge filtration $\EuF^{\ge p}$, in the sense of \eqref{eqn:FWopp}, and suppose furthermore that the splitting \emph{extends over $0$}: i.e., if we define
\[ \widetilde{\EuV}^{(p)} := \EuV^{(p)} \cap \widetilde{\EuV},\]
then the direct sum of inclusion maps
\[ \bigoplus_p \widetilde{\EuV}^{(p)} \to \widetilde{\EuV}\]
induces an isomorphism. 
Setting $q=0$, we then obtain an isomorphism
\begin{equation}
\label{eqn:v0split}
 \bigoplus_p \widetilde{\EuV}_0^{(p)} \cong \widetilde{\EuV}_0
\end{equation}
(one can say `the limiting Hodge filtration $F^{\ge \bullet}_{\mathrm{lim}}$ splits the weight filtration $W_{\le \bullet}$ on $\widetilde{\EuV}_0$'; the $B$-model \vhs{} we consider will extend over $0$ by Schmid's nilpotent orbit theorem \cite{Schmid1973}).
 
The connection $\nabla$ respects the filtration $\EuW_{\le \bullet}$, and therefore induces a connection $\nabla^\EuW$ on $\mathsf{Gr}^\EuW \EuV$; this connection is trivial, and its flat sections are canonically identified with $\mathsf{Gr}^W \widetilde{\EuV}_0$. 
Thus, we have a canonical isomorphism
\[ \mathsf{Gr}^W \, \widetilde{\EuV}_0 \otimes_\C \BbK \cong \mathsf{Gr}^\EuW \EuV. \]
Using the splittings, this gives an isomorphism
\begin{equation}
\label{eqn:Wflatbas}
 \widetilde{\EuV}_0 \otimes_\C \BbK \cong \EuV,
\end{equation}
which identifies $\widetilde{\EuV}_0^{(p)} \otimes_\C \BbK$ with $\EuV^{(p)}$.

Suppose, furthermore, that the flat sections of $\nabla^\EuW$ are contained in $\widetilde{\EuV}^{(p)} \subset \EuV^{(p)} \cong \mathsf{Gr}^\EuW_p \EuV$; then the isomorphism \eqref{eqn:Wflatbas} identifies $\widetilde{\EuV}_0 \otimes R$ with $\widetilde{\EuV}$.

\begin{lem}
\label{lem:canconn}
(Compare \cite[Theorem 11]{Deligne1997})
Suppose that our \vhs{} is above. 
Then given a choice of coordinate $q \in \BbK$, if we write the connection $\nabla$ in the trivialization \eqref{eqn:Wflatbas}, it takes the form
\[ \nabla_{q \partial_q} = q \partial_q + A(q),\]
for some $A(q) \in \End_\C\left(\widetilde{\EuV}_0\right)_{-1} \otimes \C\power{q}$. 
The subscript `$-1$' denotes the subspace of endomorphisms of $\widetilde{\EuV}_0$ that have degree $-1$ with respect to the $\frac{1}{2}\Z$-grading \eqref{eqn:v0split}.
\end{lem}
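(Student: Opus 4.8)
The plan is to deduce both assertions about the connection matrix $A(q)$ — holomorphy at $q=0$, and vanishing off the degree $-1$ part of $\End_\C(\widetilde{\EuV}_0)$ — from the action of $\nabla_{q\partial_q}$ on the \emph{constant} sections $\widetilde{\EuV}_0 \subset \widetilde{\EuV}_0 \otimes_\C \BbK \cong \EuV$ of the trivialization \eqref{eqn:Wflatbas}. For such a section $s$ one has $q\partial_q s = 0$, so $A(q)s = \nabla_{q\partial_q} s$; and since $\widetilde{\EuV}_0$ spans $\EuV$ over $\BbK$, the collection of values $\nabla_{q\partial_q} s$ for $s \in \widetilde{\EuV}_0$ determines $A(q)$ entirely. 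So the whole lemma reduces to locating $\nabla_{q\partial_q} s$ for $s \in \widetilde{\EuV}_0^{(p)}$, which I would do using three ingredients already in place: that $\nabla$ has a regular singular point with unipotent monodromy at $q=0$; that $\nabla$ preserves the weight filtration $\EuW_{\le\bullet}$, with trivial induced connection $\nabla^\EuW$ on the associated graded; and Griffiths transversality for the Hodge filtration (Lemma \ref{lem:realvhs}).

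For holomorphy: unipotence of the monodromy means the residue of $\nabla$ on the Deligne lattice $\widetilde{\EuV}$ is nilpotent, so $\nabla_{q\partial_q}$ has at worst a logarithmic pole and in particular preserves $\widetilde{\EuV}$. Under the standing hypotheses recalled just before the lemma, the trivialization \eqref{eqn:Wflatbas} identifies $\widetilde{\EuV}_0 \otimes_\C R$ with $\widetilde{\EuV}$; hence $\nabla_{q\partial_q}$ carries the constant sections $\widetilde{\EuV}_0$ into $\widetilde{\EuV}_0 \otimes_\C R = \widetilde{\EuV}_0 \otimes_\C \C\power{q}$. Thus $A(q)s \in \widetilde{\EuV}_0 \otimes_\C \C\power{q}$ for all constant $s$, i.e.\ $A(q) \in \End_\C(\widetilde{\EuV}_0) \otimes_\C \C\power{q}$.

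For the degree: fix $p$ and take $s \in \widetilde{\EuV}_0^{(p)}$, which via \eqref{eqn:Wflatbas} is a constant section lying in $\EuV^{(p)} = \EuF^{\ge p} \cap \EuW_{\le p}$. By the very construction of the trivialization, the image of $s$ in $\mathsf{Gr}^\EuW_p \EuV$ is flat for $\nabla^\EuW$; since $\nabla$ preserves the weight filtration, $\nabla_{q\partial_q} s$ lies in $\EuW_{\le p}\EuV$ and has vanishing image in $\mathsf{Gr}^\EuW_p \EuV$, so in fact $\nabla_{q\partial_q} s \in \EuW_{\le p-1}\EuV$. On the other hand $s \in \EuF^{\ge p}\EuV$, so Griffiths transversality gives $\nabla_{q\partial_q} s \in \EuF^{\ge p-1}\EuV$. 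Finally, the splitting \eqref{eqn:splitting} identifies $\EuF^{\ge p-1} \cap \EuW_{\le p-1}$ with $\EuV^{(p-1)}$, whence $A(q)s = \nabla_{q\partial_q} s \in \EuV^{(p-1)} = \widetilde{\EuV}_0^{(p-1)} \otimes_\C \BbK$; combined with the holomorphy step, $A(q)s \in \widetilde{\EuV}_0^{(p-1)} \otimes_\C \C\power{q}$. As $p$ varies, this says precisely that $A(q)$ lowers the grading \eqref{eqn:v0split} by one step, i.e.\ $A(q) \in \End_\C(\widetilde{\EuV}_0)_{-1} \otimes_\C \C\power{q}$.

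I expect the main thing to get right is not any computation but the identifications themselves: one must check that the frame \eqref{eqn:Wflatbas} is literally the Deligne canonical extension (so that `logarithmic pole' and `$R$-span of the chosen constant sections' refer to the same lattice), and that the weight-lowering and Hodge-lowering of $\nabla_{q\partial_q}$ both hold exactly — not merely up to lower-weight or lower-Hodge-degree error — on the constant sections. Both are forced by the hypotheses imposed just before the statement: that the opposite splitting extends over $0$, and that the flat sections of $\nabla^\EuW$ sit inside the lattices $\widetilde{\EuV}^{(p)}$. Once these are pinned down, the argument above is essentially mechanical.
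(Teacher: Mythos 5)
Your proposal is correct and follows essentially the same route as the paper's proof: holomorphy of $A(q)$ from the identification of $\widetilde{\EuV}_0 \otimes_\C R$ with the Deligne canonical extension, weight-lowering from the flatness of $\widetilde{\EuV}_0$ in $\mathsf{Gr}^\EuW$, Hodge-lowering from Griffiths transversality, and the conclusion $A(q)\colon \EuV^{(p)} \to \EuV^{(p-1)}$ by intersecting the two filtration statements. The only difference is that you spell out the reduction to constant sections and the role of the standing hypotheses more explicitly than the paper does.
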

\begin{proof} 
Because \eqref{eqn:Wflatbas} identifies $\widetilde{\EuV}_0 \otimes R$ with the canonical extension, which is a logarithmic lattice, $A(q) \in \End(\widetilde{\EuV}_0) \otimes \BbK$ has no poles at $q=0$.
Because $\widetilde{\EuV}_0$ gets identified with the flat sections of $\mathsf{Gr}^\EuW$, $A(q)$ sends $\EuW_{\le p} \to \EuW_{\le p-1}$. 
Furthermore, by Griffiths transversality, $A(q)$ sends $\EuF^{\le p} \to \EuF^{\le p-1}$. 
Therefore, $A(q)$ maps $\EuV^{(p)} \to \EuV^{(p-1)}$, so its Taylor coefficients have degree $-1$ as claimed.
\end{proof}

\subsection{The pairing}
\label{subsec:normpair}

Suppose that $\EuH$ is as in the previous section, and suppose furthermore that the pairing \emph{extends over $q=0$}: i.e, when we restrict the pairing $(\cdot,\cdot)_\EuV$ to $\widetilde{\EuV}$, it takes values in $R \subset \BbK$, and is a non-degenerate pairing of free $R$-modules (in the language of \cite[III.1.12]{Sabbah2007}, the pairing has \emph{weight $0$}).
Evaluating at $q=0$ then defines a non-degenerate $\C$-bilinear pairing
\begin{equation}
\label{eqn:pair0}
 (\cdot,\cdot)_0: \widetilde{\EuV}_0 \otimes \widetilde{\EuV}_0 \to \C.
\end{equation}

\begin{lem}
\label{lem:pairfix}
The $\BbK$-bilinear pairing $(\cdot,\cdot)$ on $\EuV$ is uniquely determined by $(\cdot,\cdot)_0$. 
Furthermore, the residue of $\nabla$  is skew-adjoint with respect to $(\cdot,\cdot)_0$ (in the setting of Lemma \ref{lem:canconn}, the residue is equal to $A(0)$).
\end{lem}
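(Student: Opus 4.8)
The plan is to reduce both assertions to a single matrix‑valued ordinary differential equation in $q$, and read them off its coefficients. First I would fix a coordinate $q$ and pass to the trivialization \eqref{eqn:Wflatbas} of Lemma \ref{lem:canconn}, in which $\nabla_{q\partial_q} = q\partial_q + A(q)$ with $A(q) = \sum_{k\ge 0}A_k q^k \in \End_\C(\widetilde{\EuV}_0)_{-1}\otimes\C\power{q}$. Writing the pairing $(\cdot,\cdot)=(\cdot,\cdot)_\EuV$ in this trivialization as a matrix $P(q)$ (so $P_{ij}(q)$ is the pairing of the $i$th and $j$th basis vectors of $\widetilde{\EuV}_0$, viewed as constant sections), the hypothesis that the pairing extends over $q=0$ says exactly that $P(q)$ has entries in $R\cong\C\power{q}$, with $P(0)$ the matrix of $(\cdot,\cdot)_0$; set $P(q)=\sum_{k\ge0}P_k q^k$. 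Feeding the constant basis sections into the covariant‑constancy identity $(q\partial_q)(s_1,s_2) = (\nabla_{q\partial_q}s_1,s_2)+(s_1,\nabla_{q\partial_q}s_2)$, and using $\nabla_{q\partial_q}e_i = A(q)e_i$, yields
\[ q\partial_q\,P(q) = A(q)^{\mathsf{T}}P(q) + P(q)A(q).\]
One runs this separately on $\EuV_{ev}$ and $\EuV_{odd}$, the two cases being identical; note that in the picture of Lemma \ref{lem:realvhs} the pairing is honestly $\BbK$‑bilinear, so $q\partial_q$ is an ordinary derivation and no $u$ enters.

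Next I would extract the $q^0$‑coefficient: the left side vanishes at $q=0$, so $A_0^{\mathsf{T}}P_0 + P_0 A_0 = 0$. Unwinding the matrix identity, $(A_0 v,w)_0 + (v,A_0 w)_0 = v^{\mathsf{T}}\bigl(A_0^{\mathsf{T}}P_0 + P_0 A_0\bigr)w = 0$ for all $v,w$; that is, the residue $A(0)$ of $\nabla$ is skew‑adjoint with respect to $(\cdot,\cdot)_0$, which is the second assertion.

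For the first assertion I would compare coefficients of $q^k$ for $k\ge 1$, isolating the $i=0$ term on the right to get
\[ \bigl(k\cdot\mathrm{id} - \mathrm{ad}'_{A_0}\bigr)(P_k) = \sum_{i+j=k,\ i\ge 1}\bigl(A_i^{\mathsf{T}}P_j + P_j A_i\bigr), \qquad \mathrm{ad}'_{A_0}(M):=A_0^{\mathsf{T}}M + M A_0,\]
where the right‑hand side involves only $P_0,\dots,P_{k-1}$. Because $A(q)$ has degree $-1$ for the $\tfrac{1}{2}\Z$‑grading on $\widetilde{\EuV}_0$ (Lemma \ref{lem:canconn}), $A_0$ is nilpotent, hence so is $\mathrm{ad}'_{A_0}$ (it is the sum of the two commuting nilpotent operators $M\mapsto A_0^{\mathsf{T}}M$ and $M\mapsto M A_0$); therefore $k\cdot\mathrm{id}-\mathrm{ad}'_{A_0}$ is invertible for every $k\ge 1$. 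The recursion then determines each $P_k$ from $P_0$, so $(\cdot,\cdot)$ is determined by $(\cdot,\cdot)_0$. I expect the only genuinely delicate step to be the first one — correctly translating ``covariantly constant'' into the displayed Fuchsian ODE, keeping track of signs and transposes — since once that is in place the remainder is the standard fact that a Fuchsian ODE with nilpotent residue has a unique formal solution with prescribed value at the singular point.
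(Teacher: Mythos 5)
Your proposal is correct and follows essentially the same route as the paper: write the pairing as a matrix $M(q)$ in the trivialization of Lemma \ref{lem:canconn}, derive the Fuchsian equation $q\partial_q M = A(q)^t M + M A(q)$ from covariant constancy, read skew-adjointness off the $q^0$ coefficient, and solve recursively for the higher coefficients. The only (cosmetic) difference is that you justify invertibility of $k\cdot\mathrm{id}-\mathrm{ad}'_{A_0}$ by observing directly that $\mathrm{ad}'_{A_0}$ is a sum of commuting nilpotents, whereas the paper invokes the no-common-eigenvalues criterion from Sabbah; both are valid.
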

\begin{proof}
Choose a basis for $\widetilde{\EuV}_0$: it determines a basis for $\widetilde{\EuV}$ via \eqref{eqn:Wflatbas}. 
Let $M(q) \in \mathsf{Mat}_{d \times d}(R)$ be the matrix for the pairing, with respect to this basis. 
Because the pairing is covariantly constant, we have
\[q \partial_q M(q) = A(q)^t \cdot M(q) + M(q) \cdot A(q),\]
where $A(q)$ is the matrix from Lemma \ref{lem:canconn}. 
We expand this equation in powers of $q$: the $q^0$ term says that 
\[ A(0)^t \cdot M(0) + M(0) \cdot A(0) = 0,\]
which precisely means that the residue $A(0)$ is skew-adjoint with respect to $(\cdot,\cdot)_0$.

Now we show that, given $M(0)$, we can solve inductively for the higher terms in the Taylor expansion of $M(q)$.
If $M(q) = \sum_{k \ge 0} M_k q^k$, then the $q^k$ term of the equation says that
\[ k M_k = A(0)^t \cdot M_k + M_k \cdot A(0) + \Phi_k(A(q),M_0,\ldots,M_{k-1}).\]
Because $A(0)$ and $A(0)^t$ are nilpotent, their only eigenvalues are $0$: so $k \cdot \id - A(0)$ and $A(0)^t$ have no common eigenvalues, as $k>0$. 
Therefore, by \cite[Lemma 2.16]{Sabbah2007}, the equation can be solved uniquely for $M_k$. 
By induction, all $M_k$ are determined uniquely by $M_0$, as required.
\end{proof}

\begin{lem}
\label{lem:pairgrad}
The pairing 
\[(\cdot,\cdot)_0: \widetilde{\EuV}_0^{(p)} \otimes \widetilde{\EuV}_0^{(q)} \to \C\]
is non-degenerate, if $p+q = 0$, and vanishes otherwise.
\end{lem}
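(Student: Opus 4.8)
The plan is to deduce Lemma \ref{lem:pairgrad} from the already-established facts about the pairing $(\cdot,\cdot)_\EuV$ extending over $q=0$, together with the structure of the splitting \eqref{eqn:v0split}. First I would recall that the polarization axiom in Lemma \ref{lem:realvhs} gives $(\EuF^{\ge p}\EuV_\sigma, \EuF^{\ge q'}\EuV_\sigma) = 0$ whenever $p + q' > 0$, and that the induced pairing on $\mathsf{Gr}^p_\EuF \EuV_\sigma \otimes \mathsf{Gr}^{-p}_\EuF \EuV_\sigma$ is non-degenerate. The subspaces $\EuV^{(p)} = \EuF^{\ge p} \cap \EuW_{\le p}\EuV$ realize the associated graded $\mathsf{Gr}^p_\EuF$ concretely, so this statement transfers directly to $\EuV^{(p)} \otimes \EuV^{(q')} \to \BbK$: it vanishes for $p + q' > 0$ and is perfect for $p + q' = 0$. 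By symmetry of the pairing (up to sign and $\star$), the same vanishing holds for $p + q' < 0$, so over $\BbK$ the statement is exactly what we want.

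Next I would pass from the statement over $\BbK$ to the statement over $\C$ on the fibre $\widetilde{\EuV}_0$. The hypothesis of \S\ref{subsec:normpair} is that the pairing extends over $q=0$ with weight $0$: restricted to the Deligne lattice $\widetilde{\EuV}$ it takes values in $R$ and is a non-degenerate pairing of free $R$-modules, so its reduction $(\cdot,\cdot)_0$ at $q=0$ is non-degenerate on $\widetilde{\EuV}_0$. The trivialization \eqref{eqn:Wflatbas} identifies $\widetilde{\EuV}_0^{(p)} \otimes_\C \BbK$ with $\EuV^{(p)}$ and $\widetilde{\EuV}_0 \otimes_\C R$ with $\widetilde{\EuV}$ compatibly; hence the $\BbK$-bilinear statement from the previous paragraph restricts to a statement about the $R$-valued pairing on $\bigoplus_p \widetilde{\EuV}^{(p)}$, namely that $(\widetilde{\EuV}^{(p)}, \widetilde{\EuV}^{(q')})$ lands in $R$ and, for $p+q' \neq 0$, is identically zero, while for $p+q' = 0$ it is a perfect pairing of free $R$-modules. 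Reducing mod $q$ then gives precisely the asserted vanishing of $(\cdot,\cdot)_0$ on $\widetilde{\EuV}_0^{(p)} \otimes \widetilde{\EuV}_0^{(q)}$ for $p+q \neq 0$ and its non-degeneracy for $p+q=0$.

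I do not expect a serious obstacle here; the only point requiring care is making sure the identification \eqref{eqn:Wflatbas} is compatible with the pairing in the way needed — i.e., that the grading subspaces $\widetilde{\EuV}_0^{(p)}$ and the filtration pieces $\EuF^{\ge p}$, $\EuW_{\le p}$ are matched up by the trivialization so that the $\BbK$-level orthogonality of $\EuV^{(p)}$ against $\EuV^{(q')}$ for $p+q' > 0$ translates into $q=0$ orthogonality. This is essentially built into the construction of \eqref{eqn:Wflatbas} in \S\ref{subsec:monwf}, which identifies $\widetilde{\EuV}_0^{(p)} \otimes \BbK$ with $\EuV^{(p)}$ by construction. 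The non-degeneracy for $p+q=0$ is then a matter of the associated-graded of a non-degenerate pairing with respect to an orthogonality-respecting filtration being non-degenerate, which is linear algebra once one knows $(\cdot,\cdot)_0$ itself is non-degenerate (guaranteed by weight $0$). So the proof is short: invoke the polarization axiom on $\mathsf{Gr}_\EuF$, transport along \eqref{eqn:FWopp} and \eqref{eqn:Wflatbas}, and reduce mod $q$.
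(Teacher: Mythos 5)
The first two-thirds of your argument (vanishing for $p+q'>0$ via the polarization axiom of Lemma \ref{lem:realvhs}, non-degeneracy for $p+q'=0$, and the reduction mod $q$ along the trivialization \eqref{eqn:Wflatbas}) is sound and agrees with what the paper does. But there is a genuine gap at the step where you claim that ``by symmetry of the pairing (up to sign and $\star$), the same vanishing holds for $p+q'<0$.'' The graded symmetry $(s_1,s_2)=\pm(s_2,s_1)^\star$ merely exchanges the two arguments, and the condition $p+q'>0$ is symmetric in $p$ and $q'$: applying the symmetry to $(\EuV^{(p)},\EuV^{(q')})$ with $p+q'<0$ produces $(\EuV^{(q')},\EuV^{(p)})$ with the \emph{same} negative sum, so you learn nothing. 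The polarization axiom constrains only the Hodge filtration from one side ($\EuF^{\ge p}$ pairs trivially with $\EuF^{\ge q'}$ when $p+q'>0$) and by itself cannot rule out a nonzero pairing between, say, $\EuF^{\ge -1}\cap\EuW_{\le -1}$ and $\EuF^{\ge 0}\cap\EuW_{\le 0}$.

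The missing input is the \emph{weight} filtration, not the Hodge filtration. The paper's proof handles $p+q<0$ by using that $(W_{\le p},W_{\le q})_0=0$ for $p+q<0$; since $\widetilde{\EuV}_0^{(p)}\subset W_{\le p}\widetilde{\EuV}_0$, this gives the vanishing. That orthogonality of the monodromy weight filtration is a standard consequence of the fact that it is built from the nilpotent endomorphism $A(0)$ (the residue of the connection), which is skew-adjoint with respect to $(\cdot,\cdot)_0$ by Lemma \ref{lem:pairfix} (see \cite[Lemma 6.4]{Schmid1973}). So to repair your proof you should replace the symmetry step by: cite Lemma \ref{lem:pairfix} for skew-adjointness of the residue, deduce the orthogonality $(W_{\le p},W_{\le q})_0=0$ for $p+q<0$, and combine this with the Hodge-filtration vanishing for $p+q>0$ that you already have.
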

\begin{proof}
It follows from Lemma \ref{lem:realvhs} that the pairing vanishes for $p+q > 0$, and that it is non-degenerate for $p+q = 0$. 
To show that it also vanishes for $p+q<0$, we use the fact that $(W_{\le p}, W_{\le q})_0 = 0$ for $p+q<0$: this is a standard consequence of the fact that the monodromy weight filtration is constructed using the nilpotent endomorphism given by the residue of the connection, which is skew-adjoint with respect to $(\cdot,\cdot)_0$ by Lemma \ref{lem:pairfix} (see e.g. \cite[Lemma 6.4]{Schmid1973}). 
\end{proof}

\begin{defn}
\label{defn:hodgetate}
Let $\EuH$ be a $\Z$-graded polarized \vhs{} over a formal punctured disc.
We say that $\EuH$ is \emph{Hodge-Tate} if:
\begin{enumerate}
\item $\nabla$ has a regular singular point at $0$ with unipotent monodromy of order $n$;
\item the induced weight filtration $\EuW_{\le \bullet}$ splits the Hodge filtration, and the splitting extends over $q=0$;
\item the flat sections of $\nabla^\EuW$ are contained in $\widetilde{\EuV}^{(p)}$;
\item the pairing extends over $q=0$.
\end{enumerate}
In other words, these are precisely the conditions we need in order to apply Lemmas \ref{lem:canconn}, \ref{lem:pairfix} and \ref{lem:pairgrad}.
\end{defn}

\subsection{Volume forms}
\label{subsec:normvol}

\begin{defn}
\label{defn:volform}
Suppose that $\EuH$ is Hodge-Tate, and $\widetilde{\EuV}_0^{(n/2)}$ is one-dimensional. 
It follows that $\EuF^{\ge n/2}\EuV$ is $1$-dimensional: we call an element $\Omega \in \EuF^{\ge n/2} \EuV$ a \emph{volume form}.
\end{defn}

\begin{defn}
\label{defn:normOm} 
Observe that \eqref{eqn:Wflatbas} identifies
\[  \widetilde{\EuV}_0^{(n/2)} \otimes_\C \BbK \cong \EuF^{\ge n/2} \EuV .\] 
We say that a volume form $\Omega$ is \emph{normalized} if, under this isomorphism, it corresponds to a constant element, i.e., an element of $\widetilde{\EuV}_0^{(n/2)}\otimes \C$.
\end{defn}

\begin{rmk}
If $\Omega$ is a normalized volume form, then $[\Omega] \in \mathsf{Gr}_{n/2}^{\EuW} \EuV$ is called the \emph{dilaton shift} (see \cite[\S  2.2.2]{Coates2009}). 
The terminology `normalized volume form' comes from \cite{coxkatz}, see in particular \cite[Proposition 5.6.1]{coxkatz}.
\end{rmk}

\subsection{Canonical coordinates}
\label{subsec:cancoord}

Suppose that the conditions in Definition \ref{defn:normOm} are satisfied, and $\Omega \in \EuF^{\ge n/2} \EuV$ is a normalized volume form. 
By the definition of being normalized, $[\Omega] \in \mathsf{Gr}_{n/2}^{\EuW}$ is flat;
it follows that there is a well-defined map
\begin{eqnarray}
\label{eqn:KS}
KS: T\cM & \to & \mathsf{Gr}^\EuW_{n/2-1} \EuV,\\
KS(v) & := & [\nabla_v \Omega].
\end{eqnarray}
This is called the \emph{Kodaira-Spencer map}.

\begin{rmk}
A \vhs{} is said to be \emph{miniversal} if the (analogue of the) Kodaira-Spencer map is an isomorphism onto all of $\mathsf{Gr}^{\EuW}$ (compare \cite[Definition 2.8]{Coates2009}). 
However we are only considering the case of a one-dimensional base here, with trivial grading (small quantum cohomology as opposed to big quantum cohomology), so the most we could hope for is that \eqref{eqn:KS} is an isomorphism (compare \cite[Remark 2.13]{Coates2009}).
\end{rmk}

\begin{defn}
\label{defn:cancoord}
Observe that the isomorphism \eqref{eqn:Wflatbas} identifies
\[ \widetilde{\EuV}_0^{(n/2-1)} \otimes \BbK \cong \mathsf{Gr}^\EuW_{n/2-1} \EuV .\]
We call a coordinate $q \in \BbK$ a \emph{canonical coordinate} if $KS(q \partial_q)$ is constant under this identification, i.e., lies in $\widetilde{\EuV}_0^{(n/2-1)} \otimes \C$. 
\end{defn}

\begin{rmk}
Equivalently, $q$ is a canonical coordinate if the coefficient of the matrix $A(q)$ (from Lemma \ref{lem:canconn}) that sends $\widetilde{\EuV}_0^{(n/2)} \to \widetilde{\EuV}^{(n/2-1)}_0$ is constant.
\end{rmk}

\begin{lem}
\label{lem:cancoordun}
Suppose that $\EuH$ is Hodge-Tate, and $\widetilde{\EuV}_0^{(n/2)}$ is one-dimensional. 
Then if a canonical coordinate $q$ exists, it is unique up to multiplication by a non-zero complex scalar. 
This scalar is uniquely determined if we specify a non-zero cotangent vector $\alpha \in \Omega^1_0 \cM := \mathfrak{m}/\mathfrak{m}^2$, and required that $dq=\alpha$ at $0$.

If, furthermore, $\widetilde{\EuV}_0^{(n/2-1)}$ is one-dimensional, then a canonical coordinate $q$ necessarily exists.
\end{lem}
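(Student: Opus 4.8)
The plan is to run the whole argument inside the trivialization \eqref{eqn:Wflatbas}, where by Lemma \ref{lem:canconn} the connection for a chosen coordinate $q$ reads $\nabla_{q\partial_q} = q\partial_q + A(q)$ with $A(q)\in\End_\C(\widetilde{\EuV}_0)_{-1}\otimes\C\power{q}$. Since $\widetilde{\EuV}_0^{(n/2)}$ is one-dimensional, a normalized volume form $\Omega$ is represented in this trivialization by a nonzero constant vector $\Omega_0\in\widetilde{\EuV}_0^{(n/2)}$, unique up to a factor in $\C^\times$; because rescaling $\Omega$ merely rescales $KS$, the set of canonical coordinates does not depend on this choice, so I fix $\Omega_0$ once and for all. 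As $A(q)$ has degree $-1$ and $q\partial_q\Omega_0 = 0$, one gets $\nabla_{q\partial_q}\Omega = A(q)\Omega_0 \in \widetilde{\EuV}_0^{(n/2-1)}\otimes\C\power{q}$ already, so under the identification of Definition \ref{defn:cancoord} we have $KS(q\partial_q) = A(q)\Omega_0$, and $q$ is a canonical coordinate precisely when $A(q)\Omega_0$ is $q$-independent, i.e. $A(q)\Omega_0 \equiv A(0)\Omega_0$ (this is exactly the reformulation in the remark after Definition \ref{defn:cancoord}).

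The one non-formal input is the non-vanishing $A(0)\Omega_0 \neq 0$. Here $A(0)$ is the residue of $\nabla$, which is the nilpotent endomorphism $N$ whose monodromy weight filtration (centred at $0$) defines the grading \eqref{eqn:v0split}; Lemma \ref{lem:canconn} moreover tells us $N$ is strictly of degree $-1$ for that grading, so $N^n$ maps $\widetilde{\EuV}_0^{(n/2)}$ into $\widetilde{\EuV}_0^{(-n/2)}$, and by the defining (hard Lefschetz) property of the monodromy weight filtration this map is an isomorphism. Since $\widetilde{\EuV}_0^{(n/2)} = \C\cdot\Omega_0 \neq 0$, it follows that $N^n\Omega_0 \neq 0$, hence $A(0)\Omega_0 = N\Omega_0 \neq 0$. (Note that this also forces $\widetilde{\EuV}_0^{(-n/2)}\neq 0$, so $N^n\neq 0$, under our standing hypothesis.)

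Next I examine how $A$ transforms under a change of coordinate. The trivialization \eqref{eqn:Wflatbas} is coordinate-independent — it is assembled from the canonical extension, the Hodge and weight filtrations, and the flat sections of $\nabla^\EuW$ — so only the derivation changes: for a second coordinate $\tilde q = \phi(q)$ with $\phi(q) = c_1 q + O(q^2)$, $c_1 \neq 0$, one has $\tilde q\partial_{\tilde q} = f(q)\,q\partial_q$ with $f(q) := \phi(q)/(q\phi'(q))$ a unit of $\C\power{q}$ satisfying $f(0) = 1$; since $\nabla$ is $\mathcal O_\cM$-linear in the vector field, the new matrix is $A'(\tilde q) = f(q)A(q)$, so $A'(\tilde q)\Omega_0 = f(q)\,A(q)\Omega_0$. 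Now \emph{uniqueness} follows: if $q$ is canonical then $A(q)\Omega_0 \equiv v_0 := A(0)\Omega_0$ is a nonzero fixed vector of $\widetilde{\EuV}_0^{(n/2-1)}$ by the second step, so $\tilde q$ is canonical iff $f(q)v_0$ is constant iff $f \equiv 1$ (using $f(0)=1$), and $\phi(q) = q\phi'(q)$ forces $\phi(q) = c_1 q$ by comparing Taylor coefficients; thus the canonical coordinates form a single $\C^\times$-orbit, and since $q\mapsto dq|_0 \in \mathfrak m/\mathfrak m^2$ is a $\C^\times$-equivariant map between two $\C^\times$-torsors it is a bijection, so prescribing a nonzero $\alpha$ pins the scalar. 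For \emph{existence} when $\widetilde{\EuV}_0^{(n/2-1)}$ is also one-dimensional, fix a basis vector $e$ and write $A(q)\Omega_0 = a(q)\,e$ with $a\in\C\power{q}$ and $a(0)\neq 0$ by the second step; the unit $f(q) := a(0)/a(q)$ has $f(0)=1$, and it is realized by a coordinate change (solve $\phi'(q)/\phi(q) = 1/(qf(q))$, whose right side lies in $q^{-1} + \C\power{q}$, hence is the logarithmic derivative of a uniformiser $\phi$); for $\tilde q := \phi(q)$ we then get $A'(\tilde q)\Omega_0 = f(q)a(q)\,e = a(0)\,e$, which is constant, so $\tilde q$ is a canonical coordinate.

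The only genuine obstacle is the non-vanishing $A(0)\Omega_0\neq 0$ of the second step; it is indispensable, since it is precisely what makes ``$f(q)v_0$ constant'' equivalent to ``$f$ constant'' in the uniqueness argument, and what supplies $a(0)\neq 0$ in the existence argument. It rests on the fact that $\EuW_{\le\bullet}$ is the monodromy weight filtration of the residue, so that the Lefschetz isomorphism $N^n\colon \widetilde{\EuV}_0^{(n/2)}\xrightarrow{\sim}\widetilde{\EuV}_0^{(-n/2)}$ is available; without it the top line $\widetilde{\EuV}_0^{(n/2)}$ could lie in $\ker N$, and then either no coordinate or every coordinate would be canonical, contradicting the lemma in both cases. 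Everything else is bookkeeping in the fixed trivialization together with elementary power-series manipulations.
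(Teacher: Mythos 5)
Your proof is correct and, for the uniqueness statement, follows essentially the same route as the paper: work in the trivialization \eqref{eqn:Wflatbas}, characterize canonical coordinates by constancy of $A(q)\cdot e_{n/2}$, use the fact that $A(0)$ is the residue inducing the monodromy weight filtration to get $A(0)\cdot e_{n/2}\neq 0$, and compute how this vector rescales under a change of coordinate. The paper leaves the existence statement as an exercise; your solution of it (reducing to solving $\phi'/\phi = 1/(q f(q))$ for a unit $f$ with $f(0)=1$, whose right-hand side lies in $q^{-1}+\C\power{q}$ and hence is the logarithmic derivative of a uniformiser) is correct and supplies that missing piece.
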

\begin{proof}
Let $e_{n/2}$ span $\widetilde{\EuV}_0^{(n/2)}$. 
Observe that $e_{n/2}$ represents a normalized volume form. 
In the setting of Lemma \ref{lem:canconn}, we have
\[ KS(q \partial_q) = \nabla_{q \partial_q} e_{n/2} = A(q) \cdot e_{n/2};\]
so $q$ is a canonical coordinate if and only if $A(q) \cdot e_{n/2}$ is constant. 
Observe that, because $A(0)$ is the matrix for the residue of the connection $\nabla$, which induces the monodromy weight filtration on $\widetilde{\EuV}_0$, we must have $A(0)\cdot e_{n/2} \neq 0$.

Suppose that $q$ is a canonical coordinate, and let $\tilde{q}$ be another. 
We have  $\tilde{q} = f(q) \cdot q$ for some $f(q) \in \C\power{q}$ with $f(0) \neq 0$. 
We then have
\[ q \partial_q = \left( 1 + q\cdot \frac{f'(q)}{f(q)} \right) \cdot \tilde{q} \partial_{\tilde{q}}.\]
As a consequence,
\[ [\nabla_{\tilde{q} \partial_{\tilde{q}}} e_{n/2}] = \frac{1}{1+q \cdot \frac{f'(q)}{f(q)}} \cdot A(q) \cdot e_{n/2}.\]
Therefore, $\tilde{q}$ is a canonical coordinate if and only if
\[ \frac{1}{1+q \cdot \frac{f'(q)}{f(q)}} = C\]
(since $A(q) \cdot e_{n/2} \neq 0$). 
Given the assumption $f(0) \neq 0$, the only solutions to this equation are the constants, $f(q) = c$.  
If we require that $d \tilde{q} = \alpha$ at $0$, then $c$ is uniquely determined. 

We leave the existence statement as an exercise.
\end{proof}

\subsection{Normal form}
\label{subsec:normf}

The results of the preceding sections show that \vhs{} which are Hodge-Tate, and such that canonical coordinates can be defined, can be put in a nice normal form. 
This `normal form' statement can efficiently be summarized as an equivalence of categories, in the same style as the Riemann-Hilbert correspondence. 
In this section we state this result precisely.
First we define the categories involved.

\begin{defn}
\label{defn:vhscat}
We define a category $\EuC_n$. 
Objects of $\EuC_n$ consist of:
\begin{itemize}
\item A formal punctured disc $\cM$ (i.e., a field $\BbK$ as in Definition \ref{defn:coord}).
\item A non-zero $\alpha \in \Omega^1_0 \cM$.
\item A $\Z$-graded polarized \vhs{} $\EuH$ over $\cM$ which is Hodge-Tate in the sense of Definition \ref{defn:hodgetate}, such that $\widetilde{\EuV}_0^{(n/2)}$ is $1$-dimensional, and which admits a canonical coordinate.
\end{itemize}
A morphism in $\EuC_n$ consists of an isomorphism 
\[ \psi: \cM_1 \to \cM_2\]
such that $\psi^* \alpha_2 = \alpha_1$,
and an isomorphism of \vhs:
\[ \phi: \psi^* \EuH_2 \to \EuH_1.\]
\end{defn}

\begin{defn}
\label{defn:vecspacecat}
We define the category $\EuD_n$ whose objects consist of:
\begin{itemize}
\item A finite-dimensional $\Z$-graded $\C$-vector space $V$;
\item A non-degenerate bilinear pairing 
\[ \langle \cdot, \cdot \rangle: V^{\otimes 2} \to \C\]
on $V$, of degree $0$, and such that $\langle \alpha, \beta \rangle = (-1)^n \langle \beta, \alpha \rangle$;
\item An element $A(q) \in \End_\C(V)_{2} \otimes_\C \C\power{q}$, 
\end{itemize}
such that:
\begin{itemize}
\item The grading is concentrated in degrees between $-n$ and $n$;
\item For all $k$, the map
\[ A(0)^{k}: V_{-k} \to V_{k} \]
is an isomorphism;
\item $A(0)$ is self-adjoint with respect to the pairing;
\item $V_{-n}$ is one-dimensional;
\item The component of $A(q)$ mapping $V_{-n} \to V_{-n+2}$ is constant, i.e., lies in $\End_\C(V)_2$.
\end{itemize}
The morphisms in this category are isomorphisms of complex vector spaces, preserving the grading, pairing and $A(q)$.
\end{defn}

\begin{prop}
\label{prop:cateq}
Given an object of $\EuD_n$, we define an element of $\EuC_n$ as follows:
\begin{itemize}
\item The \vhs{} is over the standard formal punctured disc $\cM := \spec \BbK$, where $\BbK := \C\laurents{q}$.
\item $\EuE := V \otimes_\C \BbK\power{u}$, with the induced $\Z$-grading.
\item The connection is
\[ \nabla_{q \partial_q} (\alpha) := q \partial_q (\alpha) - u^{-1} A(q) \cdot \alpha,\]
extended $\C\power{u}$-linearly.
\item The pairing is defined in three steps: first, define $(\alpha,\beta)_0 := \langle \alpha, \beta \rangle$; then, extend $(\cdot,\cdot)_0$ to the unique $\BbK$-bilinear extension of the pairing on $V$ that is covariantly constant (see Lemma \ref{lem:pairfix}); finally, extend the pairing $\BbK\power{u}$-sesquilinearly.
\end{itemize}
This defines a functor from $\EuD_n$ to $\EuC_n$: this functor is an equivalence of categories.  
\end{prop}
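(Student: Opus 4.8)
The plan is to show that the functor $F\colon \EuD_n \to \EuC_n$ defined in the statement is well-defined, essentially surjective, and fully faithful. The engine throughout is the Rees-type dictionary of Lemmas~\ref{lem:realvhs}, \ref{lem:canconn}, \ref{lem:pairfix} and~\ref{lem:pairgrad}, which matches the $\BbK\power{u}$-module data of a Hodge-Tate \vhs{} written in a canonical coordinate with a finite-dimensional $\Z$-graded vector space carrying a connection matrix and a pairing.

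\emph{Well-definedness.} Given $(V,\langle\cdot,\cdot\rangle,A(q))\in\EuD_n$, the module $\EuE=V\otimes_\C\BbK\power{u}$ is free of finite rank; the connection $\nabla_{q\partial_q}=q\partial_q-u^{-1}A(q)$ has degree $0$ and lands in $u^{-1}\EuE$ because $u^{-1}A(q)$ has internal degree $0$ and its only pole is the $u^{-1}$; and $\nabla$ is flat automatically, since $\cM$ is one-dimensional. The bilinear pairing extending $\langle\cdot,\cdot\rangle$ covariantly-constantly exists and is unique by the solvability argument in the proof of Lemma~\ref{lem:pairfix} (which uses only nilpotence of $A(0)$), and the sesquilinear extension to $\EuE$ gives a pairing of degree $0$; its graded symmetry of dimension $n$ is inherited from that of $\langle\cdot,\cdot\rangle$ by uniqueness, and its nondegeneracy mod $u$ is that of $\langle\cdot,\cdot\rangle$. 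The $\iii^{-k}$ twist in the dictionary of Lemma~\ref{lem:realvhs} is exactly what converts ``self-adjointness of $A(0)$'' in $\EuD_n$ into ``skew-adjointness of the residue'' required of a polarization; so $(\EuE,\nabla,(\cdot,\cdot))$ is a polarized \vhs. It is Hodge-Tate: $A(q)$ has no pole and $A(0)$ is nilpotent (pure internal degree $2$ on a space concentrated in degrees $[-n,n]$), so $q=0$ is a regular singular point with unipotent monodromy, of order exactly $n$ because ``$A(0)^k\colon V_{-k}\to V_k$ an isomorphism'' together with $\dim V_{-n}=1$ forces $A(0)^n\neq0$; moreover those same isomorphism hypotheses say precisely that the grading exhibits the monodromy weight filtration of the residue, so the weight filtration $\EuW$ splits the Hodge filtration through the constant grading, the splitting extends over $0$, and $\mathsf{Gr}^\EuW$ has the constants as flat sections; and the pairing extends over $0$ since it was built with $\C\power{q}$-coefficients and is nondegenerate at $0$. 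Finally $\widetilde{\EuV}_0^{(n/2)}=V_{-n}$ is one-dimensional, and $q$ is a canonical coordinate because the constancy of the $V_{-n}\to V_{-n+2}$ component of $A(q)$ is literally the criterion in the remark after Definition~\ref{defn:cancoord}. Thus $F$ sends objects into $\EuC_n$, and its action on morphisms is immediate.

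\emph{Essential surjectivity.} Given $(\cM,\alpha,\EuH)\in\EuC_n$, use Lemma~\ref{lem:cancoordun} to fix the canonical coordinate $q$ with $dq|_0=\alpha$, and Lemma~\ref{lem:canconn} to write the connection in the trivialization \eqref{eqn:Wflatbas} as $\nabla_{q\partial_q}=q\partial_q+A^{\mathrm{can}}(q)$ with $A^{\mathrm{can}}\in\End_\C(\widetilde{\EuV}_0)_{-1}\otimes\C\power{q}$. Read off the object of $\EuD_n$ with underlying space $\widetilde{\EuV}_0$, placing $\widetilde{\EuV}_0^{(p)}$ in degree $-2p$ (under which $A^{\mathrm{can}}$ has degree $2$), with pairing $(\cdot,\cdot)_0$ from \eqref{eqn:pair0}, and with connection matrix $A(q):=-A^{\mathrm{can}}(q)$ transported through the dictionary (so that the connection reads $q\partial_q-u^{-1}A(q)$). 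Lemmas~\ref{lem:pairfix} and~\ref{lem:pairgrad} supply the $\EuD_n$-axioms on the pairing (nondegeneracy, $(-1)^n$-symmetry, and self-adjointness of $A(0)$ modulo the $\iii$-twist), the isomorphism property of $A(0)^k$ on graded pieces is the statement that $\EuW$ is the monodromy weight filtration of the residue, and the remaining axioms (grading in $[-n,n]$, $\dim V_{-n}=1$, constancy of the $V_{-n}\to V_{-n+2}$ component) encode the unipotence order, the hypothesis that $\widetilde{\EuV}_0^{(n/2)}$ is one-dimensional, and the canonical-coordinate property. Unwinding the dictionary shows that $F$ of this object is isomorphic to $(\cM,\alpha,\EuH)$ in $\EuC_n$, the isomorphism $\cM\cong\spec(\C\laurents{q})$ being the one furnished by $q$.

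\emph{Full faithfulness, and the main obstacle.} Let $O_1,O_2\in\EuD_n$. A morphism $(\psi,\phi)\colon F(O_1)\to F(O_2)$ has both sides over the standard disc with $q$ a canonical coordinate and $dq|_0$ the distinguished cotangent vector; since an isomorphism of \vhs{} carries canonical coordinates to canonical coordinates and $\psi^*\alpha=\alpha$, Lemma~\ref{lem:cancoordun} forces $\psi=\mathrm{id}$. Then $\phi$ is a degree-$0$ $\BbK\power{u}$-linear isomorphism commuting with $\nabla$, hence it preserves flat sections and is given by a constant matrix, i.e.\ comes from a grading-preserving $\C$-linear isomorphism $V_1\to V_2$; compatibility with the connection and the pairing is then equivalent to preservation of $A(q)$ and $\langle\cdot,\cdot\rangle$, i.e.\ to being a morphism in $\EuD_n$. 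This is a bijection, so $F$ is an equivalence. The only delicate part of the whole argument is the sign-and-grading bookkeeping in the dictionary: one must simultaneously get right the factor $-2$ relating the $\tfrac12\Z$-grading of $\widetilde{\EuV}_0$ to the $\Z$-grading of $V$, the sign and power of $u$ relating $A(q)$ to $A^{\mathrm{can}}(q)$, and the $\iii^{-k}$ twist of Lemma~\ref{lem:realvhs} that reconciles skew-adjointness of the residue with self-adjointness of $A(0)$; and one must recognize that the ``$A(0)^k$ an isomorphism'' axioms are equivalent to $\EuW$ being the monodromy weight filtration of the residue, which is what makes $F(O)$ Hodge-Tate. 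Everything else is a routine unwinding of the definitions and the cited lemmas.
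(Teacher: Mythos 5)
Your proof is correct and follows essentially the same route as the paper: the paper likewise constructs the inverse functor from the normal-form machinery of \S\S \ref{subsec:monwf}--\ref{subsec:cancoord} (Lemmas \ref{lem:realvhs}, \ref{lem:canconn}, \ref{lem:pairfix}, \ref{lem:pairgrad} and \ref{lem:cancoordun}), taking $V:=\widetilde{\EuV}_0$ with grading $V_p:=\widetilde{\EuV}_0^{(-2p)}$, the pairing $(\cdot,\cdot)_0$ and the connection matrix of Lemma \ref{lem:canconn} in the canonical coordinate fixed by $\alpha$. You merely spell out the well-definedness and full-faithfulness checks (including the sign, $u$-power and $\iii$-twist bookkeeping) that the paper dismisses as ``easily shown'' and ``mutually inverse by construction''.
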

\begin{proof}
That this is a functor is easily shown. 
The inverse functor was constructed in \S \S  \ref{subsec:monwf}--\ref{subsec:cancoord}. 
Namely, to an object of $\EuC_n$, we associate the vector space $V:=\widetilde{\EuV}_0$, with the $\Z$-grading 
\[ V_p := \widetilde{\EuV}_0^{(-2p)}\]
from \eqref{eqn:v0split}, the pairing $\langle \cdot, \cdot \rangle := (\cdot,\cdot)_0$, and the endomorphism $A(q)$ of Lemma \ref{lem:canconn}, where $q$ is the unique canonical coordinate so that $dq = \alpha$ at $0$ (Lemma \ref{lem:cancoordun}). 
The fact that $q$ is a canonical coordinate implies that the component of $A(q)$ mapping $V_{-n} \to V_{-n+2}$ is constant. 
These are mutually inverse functors, by construction.
\end{proof}

\section{Hodge-theoretic mirror symmetry}

\subsection{The $A$-model \vhs}
\label{subsec:QHnorm}

Let $X$ be as in \S \ref{subsec:setup}.

\begin{defn}
\label{defn:avhs}
We define the (small) $A$-model \vhs, $\EuH^A(X,\omega) := (\EuE,\nabla,(\cdot,\cdot))$ (compare, e.g., \cite[\S  2.4]{Coates2009}): 
\begin{eqnarray*}
\EuE& := & H^{\bullet}(X;\C) \otimes_\C \BbK_A\power{u} [n] \,\,\,\,\,\,\mbox{ (where the `$[n]$' denotes a degree shift)}\\
\nabla_{Q \partial_Q} \alpha & := & Q \partial_Q(\alpha) - u^{-1} [\omega] \star \alpha\\
(\alpha,\beta) & := & (-1)^{n(n+1)/2} \int_X \alpha \cup \beta^\star.
\end{eqnarray*}
It is a $\Z$-graded polarized \vhs{} over $\cM_A$. 
In the formula for the connection $\nabla$ (which is called the `quantum differential equation'), we recall that `$\star$' denotes the quantum cup product, defined by counting rational curves $u$: each curve is weighted by 
$Q^{\omega(u)} \in \BbK_A$. 
In the formula for the pairing $(\cdot,\cdot)$, we recall that `$\beta^\star$' denotes $\beta(-u)$.
\end{defn}

\begin{rmk}
Observe that we are only considering a single K\"{a}hler class (and its multiples), rather than the entire K\"{a}hler cone, so even calling this the `small $A$-model \vhs' is over-stating it. \end{rmk}

\begin{defn}
\label{defn:Dnob}
We define an object of the category $\EuD_n$ defined in Definition \ref{defn:vecspacecat}, by setting
\begin{itemize}
\item $V := H^\bullet(X;\C)[n]$;
\item The pairing on $V$ is the intersection pairing, together with a normalization factor: 
\[  \langle \alpha,\beta \rangle := (-1)^{n(n+1)/2} \iii^{|\beta|-n} \int_X \alpha \cup \beta;\]
\item $A(Q)$ is the endomorphism given by quantum cup product with the class $[\omega]$:
\[ A(Q) \cdot \alpha := [\omega] \star \alpha.\]
\end{itemize}
Observe that these meet the conditions required for an object of $\EuD_n$: in particular, $A(0) = [\omega] \cup$, so
\[ A(0)^k: H^{n-k}(X) \to H^{n+k}(X)\]
is an isomorphism, by Hard Lefschetz, and the map $A(Q): H^0(X;\C) \to H^2(X;\BbK_A)$ given by quantum cup product with $[\omega]$ is constant, because the identity in $H^0(X)$ is also an identity for the quantum cup product. 
Finally, we have
\[ \langle \alpha, \beta \rangle = (-1)^{n(n+1)/2+|\alpha|\cdot|\beta|} \iii^{|\beta| - n}  \int \beta \cup \alpha = (-1)^n \langle \beta,\alpha \rangle,\]
using $|\alpha|+|\beta| = 2n$.
\end{defn}

\begin{lem}
\label{lem:avhsDn}
By Proposition \ref{prop:cateq}, there is a unique object of $\EuC_n$ (up to isomorphism) corresponding to the object of $\EuD_n$ from Definition \ref{defn:Dnob}. 
It is isomorphic to the $A$-model \vhs, $\EuH^A(X)$.
\end{lem}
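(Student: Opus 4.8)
The plan is to verify that $\EuH^A(X)$ is an object of $\EuC_n$, and that the inverse of the equivalence of Proposition \ref{prop:cateq} --- that is, the construction of \S\S\ref{subsec:monwf}--\ref{subsec:cancoord} --- sends it to precisely the object of $\EuD_n$ built in Definition \ref{defn:Dnob}; the lemma then follows formally. To check the four conditions of Definition \ref{defn:hodgetate}: the connection matrix $-u^{-1}([\omega]\star -)$ is regular at $Q=0$, so the canonical extension of $\EuH^A(X)$ is $H^\bullet(X;\C)\otimes_\C R$ and the residue of $\nabla$ is $-u^{-1}([\omega]\cup -)$, which is nilpotent with $([\omega]\cup)^{n+1}=0$ and $([\omega]\cup)^n\neq 0$ since $\int_X\omega^n\neq 0$; hence the monodromy is unipotent of order $n$. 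By Hard Lefschetz the associated monodromy weight filtration on $\widetilde{\EuV}_0\cong H^\bullet(X;\C)$ is, after the reindexing of \eqref{eqn:monweightk}, the filtration by cohomological degree, so the cohomological splitting $H^\bullet(X;\C)=\bigoplus_k H^k(X)$ splits both the Hodge and the weight filtrations, with $\widetilde{\EuV}_0^{(p)}\cong H^{n-2p}(X)$, and being constant in $Q$ it extends over $0$. Moreover $-u^{-1}([\omega]\star -)$ strictly lowers the weight-filtration index for every power of $Q$, so $\nabla^\EuW$ is trivial and its flat sections --- the constant cohomology classes --- lie in $\widetilde{\EuV}^{(p)}$; and the pairing is covariantly constant with constant value at $Q=0$, hence extends over $0$. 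Finally $\widetilde{\EuV}_0^{(n/2)}\cong H^0(X)$ is one-dimensional as $X$ is connected, and $Q$ is a canonical coordinate because $1\in H^0(X)$ is a unit for $\star$, whence $[\omega]\star 1=[\omega]$ and $KS(Q\partial_Q)=[-u^{-1}[\omega]]$ is constant. So $\EuH^A(X)\in\EuC_n$.

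It then remains to compare the associated object of $\EuD_n$ with that of Definition \ref{defn:Dnob}. The underlying graded vector space is $\widetilde{\EuV}_0\cong H^\bullet(X;\C)$, which with its natural grading is $H^\bullet(X;\C)[n]$. The endomorphism $A(Q)$ of Lemma \ref{lem:canconn} is the connection written in the flat trivialization \eqref{eqn:Wflatbas}; for $\EuH^A(X)$ this is just the constant cohomology basis, so $A(Q)$ is quantum cup product with $[\omega]$ (the $u^{-1}$ being absorbed by the periodicity identification). The one genuine computation is the pairing: by \S\ref{subsec:normpair} the pairing $(\cdot,\cdot)_0$ on $\widetilde{\EuV}_0$ is extracted from $(-1)^{n(n+1)/2}\int_X\alpha\cup\beta^\star$ via the dictionary of Lemma \ref{lem:realvhs}, i.e.\ $(\alpha_0,\beta_0)_0=\iii^{-k}(\tilde\alpha_0,\tilde\beta_0)$ with $k$ the degree of a representative of $\alpha_0$ in $\widetilde{\EuE}$; taking $\alpha_0\in H^a(X)$, $\beta_0\in H^b(X)$ with $a+b=2n$ and the representatives with no $u$ (so $k=a-n$), one gets $(\alpha_0,\beta_0)_0=(-1)^{n(n+1)/2}\iii^{n-a}\int_X\alpha_0\cup\beta_0=(-1)^{n(n+1)/2}\iii^{|\beta_0|-n}\int_X\alpha_0\cup\beta_0=\langle\alpha_0,\beta_0\rangle$, using $n-a=b-n$. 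Hence the inverse functor sends $\EuH^A(X)$ to the object of Definition \ref{defn:Dnob}, and applying the functor of Proposition \ref{prop:cateq} back yields the desired isomorphism of \vhs{}.

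I expect the only real work to be this last computation --- keeping track of the powers of $\iii$ and the signs, and carefully distinguishing the $\EuE$-level pairing of Definition \ref{defn:avhs} from the $\widetilde{\EuV}_0$-level pairing of Definition \ref{defn:Dnob}. The normalization factor $\iii^{|\beta|-n}$ in the latter is present exactly to absorb the factor $\iii^{-k}$ of Lemma \ref{lem:realvhs}, and the compatibility reduces to the identity $\iii^{n-a}=\iii^{b-n}$ when $a+b=2n$. Everything else is an unwinding of the definitions, with Hard Lefschetz and the unit axiom of quantum cohomology the only substantive inputs.
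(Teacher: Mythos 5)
Your proposal is correct and follows essentially the same route as the paper: both reduce the lemma to Proposition \ref{prop:cateq}, observe that the identification of the underlying module and connection is tautological (with Hard Lefschetz and the unit axiom for $\star$ doing the structural work), and isolate the pairing normalization as the one genuine computation. The paper runs the forward functor from the object of Definition \ref{defn:Dnob} and checks that $(-1)^{n(n+1)/2}\int_X \alpha\cup\beta^\star$ is the covariantly constant sesquilinear extension of $\langle\cdot,\cdot\rangle$ (using self-adjointness of $[\omega]\star$ and the uniqueness from Lemma \ref{lem:pairfix}), whereas you run the inverse functor and match $(\cdot,\cdot)_0$ against $\langle\cdot,\cdot\rangle$ via the identity $\iii^{n-a}=\iii^{b-n}$ --- the same bookkeeping read in the opposite direction.
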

\begin{proof}
The only part of the proof that is not tautological is to check that the pairing $(\alpha,\beta) = \int_X \alpha \cup \beta^\star$ is the covariantly constant, $\BbK_A\power{u}$-sesquilinear extension of $\langle \cdot, \cdot \rangle$. 
It is clear that $(\cdot,\cdot)$ is sesquilinear, and one easily checks that it is covariantly constant, because $[\omega] \star$ is self-adjoint: hence it is the unique such extension.
\end{proof}

Observe that the normalized volume forms in the $A$-model \vhs{} are spanned by the identity $e \in H^0(X;\C)$, and the canonical coordinates are the complex multiples of the K\"{a}hler parameter $Q$.

\begin{rmk}
Lemma \ref{lem:avhsDn} can be interpreted as follows. 
Suppose we know the small $A$-model \vhs{} up to isomorphism, i.e., up to isomorphism in $\EuC_n$. 
How much \emph{information} about genus-zero Gromov-Witten invariants does this isomorphism class really contain? 
Lemma \ref{lem:avhsDn} gives us the answer: it contains the same information as the corresponding isomorphism class in $\EuD_n$. 
We will work through the example of hypersurfaces in projective space in \S \ref{subsec:hyps}.
\end{rmk}

\subsection{The $B$-model \vhs}
\label{subsec:Bvhs}

Let $Y \to \cM_B$ be as in \S \ref{subsec:setup}.

\begin{defn}
\label{defn:bvhs}
The (small) \emph{$B$-model \vhs}, $\EuH^B(Y)$, is a $\Z$-graded polarized \vhs{} over $\cM_B$. 
We define it by defining the corresponding data $(\EuV,F^{\ge \bullet},\nabla,(\cdot,\cdot))$ in accordance with Lemma \ref{lem:realvhs}:
\begin{itemize}
    \item $\EuV := H^\bullet_{\mathsf{dR}}(Y/\cM_B)$ is the relative de Rham cohomology of $Y$, with the $\Z$-grading collapsed to a $\Z/2$-grading. \item  The filtration $F^{\ge s}\EuV$ is a modification of the classical Hodge filtration:
\[ F^{\ge s} \EuV := \bigoplus_{p} H^p\left(\Omega^{\ge p+2s}_{Y/\cM_B}\right).\]
\item The connection $\nabla$ is the Gauss-Manin connection, see for instance \cite{Katz1968}.
\item The pairing is the intersection pairing:
\[ (\alpha,\beta) := \int_Y \alpha^\vee \wedge \beta,\]
where $\alpha^\vee := \iii^{|\alpha|} \alpha$ (compare the definition of the Mukai pairing in \cite{Caldararu2005}). 
\end{itemize}
One easily verifies that the pairing is covariantly constant and compatible with the Hodge filtration in the required way. 
One can also verify that the corresponding $\BbK_B\power{u}$-module is isomorphic to
\[ \EuE^B(Y) := \bigoplus_{i \in \Z}  u^i \cdot
F^{-i} H_{\mathsf{dR}}^{\bullet-2i}(Y/\cM_B) \]
(compare \S \ref{subsec:vhsBside}).
\end{defn}

Observe that
\[ F^{\ge \frac{n}{2}} \EuV \cong H^0\left(\Omega^n_{Y/\cM_B}\right).\]
Hence the terminology in Definition \ref{defn:volform}: a volume form in $\EuH^B(Y)$ is the same thing as a section $\Omega \in \Gamma(\Omega^n_{Y/\cM_B})$, i.e., a relative volume form on $Y \to \cM_B$. 

We recall the classical \emph{Kodaira--Spencer map},
\[ \mathsf{KS}: \mathcal{T}(\cM_B / \spec \C) \to H^1(Y,\mathcal{T}(Y/\cM_B)) \]
(see \cite[\S A.6]{Perutz2015} for the definition we use). 
The map
\[ [\nabla_v] : \mathsf{Gr}^s_\EuF \EuE^B(Y) \to \mathsf{Gr}^{s-1}_\EuF \EuE^B(Y)\]
induced by the connection is identified with the map
\[ \iota_{\mathsf{KS}(v)}: H^\bullet(\Omega^{-\bullet}) \to H^\bullet(\Omega^{-\bullet})\]
(compare \cite[Theorem 10.4]{Voisin2002a}).

\subsection{Mirror symmetry}
\label{subsec:closedms}

Let $X$ and $Y$ be as in \S \ref{subsec:setup}.

\begin{defn}[= Definition \ref{defn:hodgems}]
\label{defn:hodgems2}
We say that $X$ and $Y$ are \emph{Hodge-theoretically mirror} if there is an isomorphism of formal punctured discs
\[ \psi: \cM_A \to \cM_B\]
(called the \emph{mirror map}), and an isomorphism of \vhs,
\[ \EuH^A(X) \cong \psi^* \EuH^B(Y).\] 
\end{defn}

\begin{thm}
\label{thm:hodgemsinfo}
Suppose that $X$ and $Y$ are Hodge-theoretically mirror in the sense of Definition \ref{defn:hodgems2}. 
Then
\begin{itemize}
    \item The mirror map 
\[ \psi: \cM_A \to \cM_B\]
is uniquely determined up to multiplication by a complex scalar $c_1$ (see Lemma \ref{lem:cancoordun}).
\item $\EuH^B(Y)$ contains all information about the object of $\EuD_n$ given in Definition \ref{defn:Dnob}, up to substitution $A(Q) \mapsto A(Q/c_1)$.
\end{itemize} 
In particular, Hodge-theoretic mirror symmetry allows us to compute the $A(Q)$, the matrix of quantum cup product with $[\omega]$, from $\EuH^B(Y)$ (up to the ambiguity in $c_1$).
\end{thm}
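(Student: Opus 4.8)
The plan is to deduce the statement from the equivalence of categories $\EuC_n \simeq \EuD_n$ of Proposition \ref{prop:cateq}, together with the identification of $\EuH^A(X)$ supplied by Lemma \ref{lem:avhsDn}. First I would record that both \vhs{} in Definition \ref{defn:hodgems2} are objects of $\EuC_n$. By Lemma \ref{lem:avhsDn}, $\EuH^A(X)$ is the object of $\EuC_n$ corresponding under Proposition \ref{prop:cateq} to the object of $\EuD_n$ of Definition \ref{defn:Dnob}; in particular it is Hodge-Tate, $\widetilde{\EuV}_0^{(n/2)}$ is one-dimensional (spanned by the unit $e \in H^0(X;\C)$), and it admits a canonical coordinate, namely the K\"ahler parameter $Q$ or any complex multiple of it. The three conditions `Hodge-Tate', `$\widetilde{\EuV}_0^{(n/2)}$ one-dimensional' and `admits a canonical coordinate' depend only on the isomorphism class of the \vhs{} and are preserved under pullback along an isomorphism of formal punctured discs; hence the isomorphism $\EuH^A(X) \cong \psi^* \EuH^B(Y)$ forces $\psi^*\EuH^B(Y)$, and therefore $\EuH^B(Y)$, to be an object of $\EuC_n$ as well.

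Next I would promote Hodge-theoretic mirror symmetry to an isomorphism internal to $\EuC_n$. Fix once and for all the nonzero cotangent vector $\alpha_A \in \Omega^1_0 \cM_A$ determined by $Q$ (i.e.\ $dQ = \alpha_A$ at $0$); then the canonical coordinate normalised by $\alpha_A$ is $Q$, and by Lemma \ref{lem:avhsDn} the object of $\EuD_n$ attached to $(\cM_A,\alpha_A,\EuH^A(X))$ is exactly the one of Definition \ref{defn:Dnob}, whose matrix $A(Q)$ is quantum cup product with $[\omega]$. Given any mirror map $\psi$, set $\alpha_B := (\psi^{-1})^* \alpha_A$, so that $\psi^*\alpha_B = \alpha_A$; then $\psi$, together with the given isomorphism $\phi: \psi^*\EuH^B(Y) \to \EuH^A(X)$ of \vhs, is an isomorphism in $\EuC_n$ from $(\cM_A,\alpha_A,\EuH^A(X))$ to $(\cM_B,\alpha_B,\EuH^B(Y))$. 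This already implies the first bullet: if $\psi_1, \psi_2$ are two mirror maps and $q_B$ is a canonical coordinate for $\EuH^B(Y)$, then $\psi_1^* q_B$ and $\psi_2^* q_B$ are both canonical coordinates for the single \vhs{} $\EuH^A(X)$ (the notion of canonical coordinate being intrinsic to a Hodge-Tate \vhs, and transported by isomorphisms of \vhs{} covering isomorphisms of bases), so by Lemma \ref{lem:cancoordun} they differ by a nonzero complex scalar; equivalently $\psi_2$ differs from $\psi_1$ by post-composition with a scaling automorphism of $\cM_B$, and this is the scalar $c_1$.

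For the second bullet I would apply the inverse equivalence $\EuC_n \to \EuD_n$ to the isomorphism just produced. It carries $(\cM_A,\alpha_A,\EuH^A(X))$ to the object of Definition \ref{defn:Dnob}, and it carries $(\cM_B,\alpha_B,\EuH^B(Y))$ to an object of $\EuD_n$ built entirely out of $\EuH^B(Y)$, obtained by reading off the connection matrix $A(q_B)$ via Lemma \ref{lem:canconn} in the canonical coordinate $q_B$ with $dq_B = \alpha_B$ at $0$. Since the two are isomorphic in $\EuD_n$, the matrix extracted from $\EuH^B(Y)$ agrees, under the accompanying isomorphism of underlying vector spaces, with quantum cup product by $[\omega]$. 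Because $\psi$ (hence $\alpha_B$, hence $q_B$) is not known in advance, in practice one fixes some arbitrary nonzero cotangent vector on $\cM_B$ and extracts $A$ in the corresponding canonical coordinate; by Lemma \ref{lem:cancoordun} that coordinate differs from $q_B$ by a nonzero scalar, which (since $q\partial_q$ is unchanged under scaling the coordinate by a constant) replaces the resulting power series by its reparametrisation $A(Q) \mapsto A(Q/c_1)$. The `in particular' clause is then immediate, as $A(Q)$ is by construction the matrix of quantum cup product with $[\omega]$.

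Almost all of the real content of the theorem lives upstream, in Proposition \ref{prop:cateq} and Lemma \ref{lem:avhsDn}; the argument above is bookkeeping through the resulting equivalence. The one point that genuinely requires care --- though it is routine --- is to check that the data entering the definition of a canonical coordinate (the limiting Hodge and monodromy weight filtrations, the splitting $\bigoplus_p \widetilde{\EuV}_0^{(p)} \cong \widetilde{\EuV}_0$ of \eqref{eqn:v0split}, and the Kodaira-Spencer map \eqref{eqn:KS}) is intrinsic to the \vhs{} and is transported correctly by an isomorphism of \vhs{} that covers a \emph{non-trivial} isomorphism of bases, so that `canonical coordinate' really is a notion the mirror map can carry from one side to the other.
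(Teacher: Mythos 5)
Your proposal is correct and follows essentially the same route as the paper: both arguments reduce the theorem to the equivalence $\EuC_n \simeq \EuD_n$ of Proposition \ref{prop:cateq} together with the identification of $\EuH^A(X)$ from Lemma \ref{lem:avhsDn}, with the scalar $c_1$ arising from the ambiguity in matching the distinguished cotangent vectors (equivalently, canonical coordinates) at $0$ on the two sides. Your write-up merely spells out in more detail the bookkeeping that the paper's three-line proof compresses, including the uniqueness of $\psi$ via Lemma \ref{lem:cancoordun}.
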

\begin{proof}
We have seen that $\EuH^A(X)$ is an object of $\EuC_n$. 
If $X$ and $Y$ are Hodge-theoretically mirror, then $\EuH^B(Y)$ is also an object of $\EuC_n$, with $\alpha := dq(0) \in \Omega^1_{\cM_B}$.
We must have $c_1 \cdot dq(0) = (\psi^{-1})^*dQ(0)$ for some  $c_1 \in \C^*$, so if we equip $\EuH^A(X)$ with the coordinate $Q/c_1$ instead of $Q$, the resulting objects of $\EuC_n$ are isomorphic: then the corresponding objects of $\EuD_n$ are isomorphic by Proposition \ref{prop:cateq}. 
\end{proof}

\subsection{Application: hypersurfaces in projective space}
\label{subsec:hyps}

We recall the example from \S \ref{subsec:app}: $X^n$ is a degree-$n$ Fermat hypersurface in $\mathbb{CP}^{n-1}$, with integral symplectic form $\omega$, and $Y^n = \tilde{Y}^n/G$ is its mirror. 
As explained there, homological mirror symmetry \cite[Theorem 1.8]{Sheridan2015}, together with our main theorem (Theorem \ref{thm:main}) imply that they are also Hodge-theoretically mirror. 
The aim of this section is to answer the question: how much information about Gromov-Witten invariants of $X^n$ does this give us?

There is an action of the character group $G^*$ on $X^n$, and $\EuH^A(X^n)^{G^*}$ is precisely the Hodge part of the cohomology, i.e., the part generated by the K\"{a}hler class $[\omega]$ (see \cite[Lemma 7.5]{Sheridan2013} for a proof of this fact). 
This is the part that has interesting information about Gromov-Witten invariants, so this is the part we will focus on. 
There is also an action of $G^*$ on $H^\bullet_{\mathsf{dR}}(Y^n)$, and $\EuH^B(Y^n)^{G^*} \cong \EuH^B(\tilde{Y}^n)^G$.
The proof of homological mirror symmetry in \cite{Sheridan2015} makes it clear that mirror symmetry matches up these $G^*$-actions: so the resulting isomorphism of \vhs{} identifies
\[ \EuH^A(X^n)^{G^*}   \cong \psi^*\EuH^B(\tilde{Y}^n)^G.\]

Now, we recall that the leading term in the mirror map is determined in Theorem \ref{thm:hmscy}, up to sign. 
This means that in fact, $\EuH^A(X^n)^{G^*}$ and $\EuH^B(\tilde{Y}^n)^G$ are isomorphic as objects of $\EuC_n$ (up to the sign ambiguity): in particular, the ambiguity in $c_1$ from Theorem \ref{thm:hodgemsinfo} is removed, up to the sign. 
It follows that the corresponding objects of $\EuD_n$ are isomorphic (potentially up to the substitution $A(q) \mapsto A(-q)$), by Proposition \ref{prop:cateq}. 
Let $(V^{G^*},\langle - ,-\rangle,A(q))$ represent this isomorphism class in $\EuD_n$: it is isomorphic to the object from Definition \ref{defn:Dnob}, by Lemma \ref{lem:avhsDn}.

Now, up to multiplication by an overall sign, there is a unique basis $\{e_0,e_2,\ldots,e_{2n}\}$ for $V^{G^*} \cong H^{ev}(X;\C)^{G^*}$ such that
\begin{itemize}
\item $e_{i+2} = A(0) \cdot e_i$;
\item $\langle e_0,e_{2n}\rangle = (-1)^{n(n+1)/2}\iii^n\int_X \omega^n$.
\end{itemize}
This coincides with the basis $\{e,\omega,\omega^{\cup 2}, \ldots,\omega^{\cup n} \}$ for $H^{ev}(X)^{G^*}$, up to an overall sign. 
In particular, the matrix entries of $A(Q)$ with respect to this basis can be extracted from the isomorphism class in $\EuD_n$. 
They correspond to three-point, genus-zero Gromov-Witten invariants with insertions on cohomology classes $\omega,\omega^j,\omega^k$ for any $j,k$. 

\begin{rmk}
Note that these Gromov-Witten invariants are all non-negative: in particular, if there is some such Gromov-Witten invariant that does not vanish and has odd degree, we can use it to fix the sign ambiguity $Q \mapsto -Q$.  
We can do this, in particular, for the quintic $X^5$.
By comparison with classical mirror symmetry \cite[\S 6.3.3]{coxkatz}, the result in those cases is that the mirror map in Theorem \ref{thm:hmscy} is $\psi^*(q) = Q + \mathcal{O}(Q^2)$: i.e., the undetermined sign is $+1$. 
We conjecture that the sign is always $+1$.
\end{rmk}

For the quintic, we have 
\begin{eqnarray*}
A(Q) \cdot e_0 &=& e_2,\\
A(Q) \cdot e_2 &=& g(Q) \cdot e_4,\\
A(Q) \cdot e_4 &=& e_6.
\end{eqnarray*}
Thus, only a single matrix entry contains non-trivial information, namely $g(Q)$: if $[\omega] = H$ (where $H$ is Poincar\'{e} dual to the hyperplane class), we have
\begin{eqnarray*}
g(Q) \cdot \int_X \omega^3 &=& \langle [\omega],[\omega],[\omega] \rangle_{0,3}\\
\Rightarrow 5 \cdot g(Q) &=& 5 + \sum_{d =1}^\infty n_d \cdot d^3 \cdot \frac{Q^{d}}{1-Q^{d}},
\end{eqnarray*}
where $n_d$ is the virtual number of degree-$d$ rational curves on $X^5$ (see, e.g., \cite[\S  2.1]{coxkatz}).
In particular, we can compute the curve counts $n_d$ from the isomorphism class of $\EuH^A(X^5)^{G^*}$ in $\EuC_3$, hence also from the isomorphism class of $\EuH^B(\tilde{Y}^5)^G$.

In practice, this is not necessarily the most efficient way of extracting Gromov-Witten invariants: $\EuH^B(Y)$ can be efficiently computed by computing the Picard-Fuchs differential equation. 
Then the mirror map $\psi$ can be computed in terms of the first two logarithmic solutions of the Picard-Fuchs equation (which can sometimes be written in terms of hypergeometric functions), and the Yukawa coupling can also be computed by solving a certain differential equation. 
We refer the reader to \cite[Chapter 2]{coxkatz} for an explanation of these matters. 
We content ourselves with an exposition of what information about Gromov-Witten invariants can \emph{in principle} be extracted from Hodge-theoretic mirror symmetry.

\begin{rmk}
We observe that the version of Hodge-theoretic mirror symmetry in Definition \ref{defn:hodgems} is not necessarily a consequence of the version of mirror symmetry proved for Calabi-Yau complete intersections in toric varieties in \cite{Givental1996}. 
Namely, because Givental computed Gromov-Witten invariants by localization on the space of stable maps into the ambient toric variety, he computes Gromov-Witten invariants with insertions from cohomology classes restricted from the ambient variety. 
In contrast, Definition \ref{defn:hodgems} takes into account all of the cohomology of $X$, not just the ambient classes. 
However, for Calabi-Yau hypersurfaces in projective space, quantum cup product of $[\omega]$ with primitive classes is necessarily trivial, so this does not give us any non-trivial information about Gromov-Witten invariants. 
\end{rmk}

\section{The Fukaya category}
\label{sec:fuk}

Let $X$ be a connected $2n$-dimensional integral Calabi-Yau symplectic manifold, as in \S  \ref{subsec:fukint}. 
Let $\EuF(X)$ be a version of the Fukaya category of $X$. 
In this section, we give a list of properties that we need the Fukaya category $\EuF(X)$ to have in order for our results to work. 

Firstly, we need the Fukaya category to be a $\BbK_A$-linear and $\Z$-graded $\ainf$ category, where $\BbK_A := \C\laurents{Q}$.
Secondly, we need it to satisfy all of the properties enumerated in \cite[\S  2]{Perutz2015}: these will be proven for the relative Fukaya category in \cite{Perutz2015a}. 

We will not repeat all of those properties here, but recall that one of the required properties is the existence of the \emph{closed-open map}, which is a map of graded $\BbK$-algebras:
\[ \EuC\EuO: \QH^\bullet(X) \to \HH^\bullet(\EuF(X)),\]
and another is the \emph{open-closed map}, which is a map of graded $\QH^\bullet(X)$-modules:
\[ \EuO\EuC: \HH_\bullet(\EuF(X)) \to \QH^{\bullet+n}(X)\]
(here, $\HH_\bullet(\EuF(X))$ acquires a $\QH^\bullet(X)$-module structure via the closed-open map $\EuC\EuO$, and its natural $\HH^\bullet(\EuF(X))$-module structure).

Thirdly, we need the Fukaya category to satisfy some additional properties, which we list in the remainder of this section. 
These properties will be proven for the relative Fukaya category in \cite{Ganatra2015a}.

\subsection{Cyclic open-closed map}
\label{subsec:occyc}

Recall the various flavours of cyclic homology of an $\ainf$ category $\EuC$: $\HC^{+,-,\infty}_\bullet(\EuC)$ is a $W^{+,-,\infty}$-module, where $W^\infty = \BbK\laurents{u}$, $W^- = \BbK\power{u}$, $W^+ = W^\infty/W^-$ ($\HC_\bullet^\infty$ is also denoted $\HP_\bullet$, and called `periodic cyclic homology').

For the relative Fukaya category, there exist maps
\[ \widetilde{\EuO\EuC}^{-,+,\infty}: \HC_\bullet^{-,+,\infty}(\EuF(X)) \to \QH^{\bullet+n}(X)\otimes W^{+,-,\infty} \]
and these maps are compatible with the Connes periodicity exact sequences.

\begin{rmk}
In the setting of Liouville manifolds, the cyclic open-closed maps will be 
constructed (from cyclic homology of the wrapped and compact Fukaya categories to
$S^1$-equivariant symplectic cohomology and ordinary homology respectively) in \cite{Ganatra2015}. 
\end{rmk}

\subsection{Getzler-Gauss-Manin connection}
\label{subsec:GGMOC}

The negative cyclic open-closed map respects connections:
\[ \widetilde{\EuO\EuC}^{-} \circ \nabla^{GGM}_v = \nabla^{QDE}_v \circ \widetilde{\EuO\EuC}^{-},\]
where 
\[\nabla^{GGM}: T \cM_A  \otimes \HC^-_\bullet(\EuF(X))\to u^{-1} \HC^-_\bullet(\EuF(X))\]
 is the Getzler-Gauss-Manin connection (see \cite{Getzler1993}, or \cite{Sheridan2015a} for an exposition adapted to the present setting), and $\nabla^{QDE}$ is the \emph{quantum differential equation} of Definition \ref{defn:avhs}.

\subsection{Mukai pairing}
\label{subsec:OCMuk}

Because $\EuF(X)$ is proper, its Hochschild homology admits the \emph{Mukai pairing} (see \cite{Shklyarov2012} for the $\mathsf{dg}$ case, \cite{Sheridan2015a} for the $\ainf$ case):
\[ \langle-,-\rangle_{Muk}: \HH_\bullet(\EuF(X)) \otimes \HH_\bullet(\EuF(X)) \to \BbK.\]
The open-closed map intertwines the Mukai pairing with the intersection pairing on quantum cohomology:

\begin{equation}
\label{eqn:ocmuk}
\int_X \EuO \EuC(\alpha) \cup \EuO \EuC(\beta) = (-1)^{n(n+1)/2}\langle \alpha,\beta\rangle_{Muk}.
\end{equation}

\begin{example}
If $\alpha = e_{L_0}$ and $\beta = e_{L_1}$ are Chern characters of objects $L_i$, then 
\[ \langle e_{L_0},e_{L_1} \rangle_{Muk} = \chi(\Hom^\bullet(L_0,L_1))\]
(see e.g., \cite[Ex. 5.23]{Sheridan2015a}). 
If $L_i$ are objects of the Fukaya category $\EuF(X)$, they correspond to oriented Lagrangian submanifolds of $X$: and in certain situation (e.g., when $L_i$ bound no non-constant holomorphic discs) one can prove that $\EuO\EuC(e_{L_i}) = [L_i]$. 
Then \eqref{eqn:ocmuk} reduces to the well-known formula
\[ [L_0] \cdot [L_1] = (-1)^{n(n+1)/2} \chi(HF^\bullet(L_0,L_1)).\]
\end{example}

\subsection{Higher residue pairing}
\label{subsec:higherresoc}

The Mukai pairing admits a lift to negative cyclic homology, called the \emph{higher residue pairing} (see \cite{Shklyarov2013} for the $\mathsf{dg}$ case, \cite{Sheridan2015a} for the $\ainf$ case):
\[ \langle-,-\rangle_{res}: \HC^-_\bullet(\EuF(X)) \times \HC^-_\bullet(\EuF(X)) \to \BbK\power{u},\]
which is $\BbK\power{u}$-sesquilinear, and extends the Mukai pairing.
Similarly, quantum cohomology admits a sesquilinear pairing, given by the intersection pairing (see Definition \ref{defn:avhs}).

The negative cyclic open-closed map intertwines these pairings:
\[ \langle \alpha, \beta \rangle_{res} = \left\langle \widetilde{\EuO\EuC}^-(\alpha),\widetilde{\EuO\EuC}^-(\beta) \right\rangle.\]

\section{Proofs}
\label{sec:pf}

\begin{thm}[Theorem \ref{thm:cycoc}]
\label{thm:ocvhs}
The negative cyclic open-closed map
\[ \widetilde{\EuO \EuC}^-: \HC_\bullet^-(\EuF) \to \EuH^A(X)\]
is a morphism of polarized pre-\vhs.
\end{thm}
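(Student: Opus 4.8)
The plan is to show that $\widetilde{\EuO\EuC}^-$ satisfies, clause by clause, the conditions packaged into Definitions \ref{defn:prevhs} and \ref{defn:pol} for being a morphism of polarized pre-\vhs{}; essentially all of the substance is already contained in the properties of $\EuF(X)$ recorded in \S \ref{sec:fuk} (established for the relative Fukaya category in \cite{Perutz2015a, Ganatra2015a}), so the work here is mainly one of bookkeeping and of reconciling normalizations. First I would identify the two objects being compared as polarized pre-\vhs{} over $\cM_A$: on the source, Proposition \ref{prop:vhscat} equips $\HC_\bullet^-(\EuF(X))$ with the Getzler--Gauss--Manin connection $\nabla^{GGM}$ and, since $\EuF(X)$ is proper and carries a weak proper Calabi-Yau structure (the latter coming from Poincar\'e duality on Floer cohomology, part of the package recorded in \S \ref{sec:fuk}, cf. \cite{Perutz2015}), with the higher residue pairing $\langle\cdot,\cdot\rangle_{res}$; on the target, $\EuH^A(X)$ is a polarized \vhs{}, hence a polarized pre-\vhs, by Definition \ref{defn:avhs}. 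Then I would identify the codomain $\QH^{\bullet+n}(X)\otimes W^-$ of $\widetilde{\EuO\EuC}^-$, with $W^-=\BbK_A\power{u}$, with the underlying module $\EuE = H^\bullet(X;\C)\otimes_\C\BbK_A\power{u}[n]$ of $\EuH^A(X)$ via the degree shift $[n]$, so that the property recalled in \S \ref{subsec:occyc} exhibits $\widetilde{\EuO\EuC}^-$ as a degree-$0$ homomorphism of $\BbK_A\power{u}$-modules.

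Next comes the connection compatibility: by \S \ref{subsec:GGMOC}, $\widetilde{\EuO\EuC}^-\circ\nabla^{GGM}_v = \nabla^{QDE}_v\circ\widetilde{\EuO\EuC}^-$ for all $v\in T\cM_A$, and $\nabla^{QDE}$ is exactly the quantum connection of Definition \ref{defn:avhs}. This is precisely the condition for $\widetilde{\EuO\EuC}^-$ to be a morphism of unpolarized pre-\vhs. Then, for the polarization: by \S \ref{subsec:higherresoc} we have $\langle\alpha,\beta\rangle_{res} = \langle\widetilde{\EuO\EuC}^-(\alpha),\widetilde{\EuO\EuC}^-(\beta)\rangle$, with the right-hand pairing the sesquilinear polarization underlying $\EuH^A(X)$; here I would check that the normalization of that pairing is the $(-1)^{n(n+1)/2}$-twisted integration pairing of Definition \ref{defn:avhs}, which is forced by compatibility with \eqref{eqn:ocmuk} (reduce mod $u$, using that $\langle\cdot,\cdot\rangle_{res}$ lifts $\langle\cdot,\cdot\rangle_{Muk}$ and that the $\BbK_A\power{u}$-sesquilinear polarization lifts the intersection pairing on $H^\bullet(X;\C)$). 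Combined with sesquilinearity of both pairings, which holds by construction, this shows $\widetilde{\EuO\EuC}^-$ intertwines polarizations, completing the verification.

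The main obstacle is not visible at this level of the argument: it is deferred to \cite{Perutz2015a, Ganatra2015a}, where $\widetilde{\EuO\EuC}^-$ is constructed via moduli of pseudoholomorphic discs with interior marked points together with an $S^1$-equivariant (cyclic) structure, and where the two compatibilities above are proved by TQFT-style degeneration arguments. Within the present paper the only genuine care required is conventional: making the shift $[n]$ match the $\Z$-grading on cyclic homology so that $\widetilde{\EuO\EuC}^-$ is strictly degree $0$, and aligning the sign normalization of the higher residue pairing as fixed in \cite{Sheridan2015a} with the $(-1)^{n(n+1)/2}$ and $\iii^{\bullet}$ factors appearing in Definition \ref{defn:avhs}. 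Once these conventions are matched, the statement follows immediately from the listed properties of $\EuF(X)$.
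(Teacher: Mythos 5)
Your proposal is correct and follows essentially the same route as the paper's own (very short) proof: cite \S \ref{subsec:GGMOC} for compatibility with connections, giving a morphism of unpolarized pre-\vhs{}, and \S \ref{subsec:higherresoc} for compatibility of the higher residue pairing with the polarization on $\EuH^A(X)$. The additional bookkeeping you supply (identifying the target module, the degree shift $[n]$, and the sign normalizations) is consistent with the conventions of Definition \ref{defn:avhs} and does not change the argument.
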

\begin{proof}
The content of \S  \ref{subsec:GGMOC} is that $\widetilde{\EuO\EuC}^-$ respects connections, hence is a morphism of unpolarized pre-\vhs; the content of \S \ref{subsec:higherresoc} is that $\widetilde{\EuO\EuC}^-$ respects polarizations.
\end{proof}

\begin{thm}[Theorem \ref{thm:ociso}]
\label{thm:ociso2}
If $\EuF(X)$ is non-degenerate and smooth, then the following maps are all isomorphisms: $\EuO\EuC$, $\EuC\EuO$, $\widetilde{\EuO\EuC}^+$, $\widetilde{\EuO\EuC}^-$, $\widetilde{\EuO\EuC}^{\infty}$.
\end{thm}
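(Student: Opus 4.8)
The plan is to first prove that $\EuO\EuC$ and $\EuC\EuO$ themselves are isomorphisms using only Hochschild-level data, and then to bootstrap to the negative, positive, and periodic cyclic versions by comparing $u$-adic filtrations and invoking Connes' periodicity sequences. For the first step, which is in substance the non-degeneracy criterion of \cite{Ganatra2013, Ganatra2015}: non-degeneracy supplies a class $[\sigma] \in \HH_\bullet(\EuF(X))$ with $\EuO\EuC([\sigma]) = e$, and since $\EuO\EuC$ is a map of $\QH^\bullet(X)$-modules (with the module structure on $\HH_\bullet$ coming from $\EuC\EuO$), for every $x \in \QH^\bullet(X)$ we get $x = x\cdot e = \EuO\EuC(\EuC\EuO(x)\cdot[\sigma])$, so $\EuO\EuC$ is surjective. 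Because $\EuF(X)$ is smooth and proper, $\HH_\bullet(\EuF(X))$ is a finite-dimensional $\BbK_A$-vector space carrying a non-degenerate Mukai pairing; by the compatibility \eqref{eqn:ocmuk} with the non-degenerate Poincar\'e pairing on $\QH^\bullet(X)$, any $\alpha$ in the kernel of $\EuO\EuC$ pairs trivially with everything and hence vanishes, so $\EuO\EuC$ is also injective, hence an isomorphism. Finally, the weak proper Calabi--Yau structure of dimension $n$ identifies $\HH^\bullet(\EuF(X))$ with $\HH_\bullet(\EuF(X))$ (up to shift) in a way that intertwines $\EuC\EuO$ with $\EuO\EuC$ up to Poincar\'e duality on $\QH^\bullet(X)$, so $\EuC\EuO$ is an isomorphism as well.

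Next I would show that the non-commutative Hodge--de Rham spectral sequence of $\EuF(X)$ degenerates. Since $\widetilde{\EuO\EuC}^-$ is $\BbK_A\power{u}$-linear (Theorem \ref{thm:ocvhs}), it preserves the $u$-adic filtrations on $\HC^-_\bullet(\EuF(X))$ and on the free module $H^\bullet(X;\C)\otimes\BbK_A\power{u}$ underlying $\EuH^A(X)$, and therefore induces a morphism of the associated spectral sequences. The target spectral sequence has first page $\QH^{\bullet+n}(X)\otimes\BbK_A\power{u}$ with vanishing differentials (the module is free), while the source has first page $\HH_\bullet(\EuF(X))\otimes\BbK_A\power{u}$ with differential induced by the cyclic operator $B$ — this is exactly the non-commutative Hodge--de Rham spectral sequence. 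The induced map on first pages is $\EuO\EuC\otimes\BbK_A\power{u}$ (using that $\widetilde{\EuO\EuC}^-$ reduces modulo $u$ to $\EuO\EuC$, from the construction in \cite{Ganatra2015a}), which is a module isomorphism by Step 1. A module isomorphism between first pages that commutes with the differentials forces the source differential to vanish, since it is injective and its composite with that differential equals the target differential, which is zero; iterating over all higher pages gives degeneration.

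With degeneration in hand, $\HC^-_\bullet(\EuF(X))$ is a free $\BbK_A\power{u}$-module, complete and separated for the $u$-adic filtration, with $\HC^-_\bullet(\EuF(X))/u \cong \HH_\bullet(\EuF(X))$; the same is true of the target, whose reduction mod $u$ is $\QH^{\bullet+n}(X)$. The map $\widetilde{\EuO\EuC}^-$ is $\BbK_A\power{u}$-linear and reduces mod $u$ to the isomorphism $\EuO\EuC$, and a $\BbK_A\power{u}$-linear map of free complete $\BbK_A\power{u}$-modules that is an isomorphism modulo $u$ is an isomorphism; so $\widetilde{\EuO\EuC}^-$ is an isomorphism. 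For the periodic version, $\HP_\bullet(\EuF(X)) = \HC^-_\bullet(\EuF(X))\otimes_{\BbK_A\power{u}}\BbK_A\laurents{u}$ because inverting $u$ is exact at the chain level (and likewise on the target side), and $\widetilde{\EuO\EuC}^\infty$ is the localization of $\widetilde{\EuO\EuC}^-$ by compatibility with $\HC^-_\bullet\to\HP_\bullet$; hence $\widetilde{\EuO\EuC}^\infty$ is an isomorphism. Finally, $\widetilde{\EuO\EuC}^-$, $\widetilde{\EuO\EuC}^\infty$, $\widetilde{\EuO\EuC}^+$ fit into a morphism between Connes' periodicity long exact sequences relating $\HC^-_\bullet$, $\HP_\bullet$, $\HC^+_\bullet$ on the source with the corresponding sequence for $H^\bullet(X;\C)$ on the target (\S\ref{subsec:occyc}); two of the three vertical maps being isomorphisms, the five lemma yields that $\widetilde{\EuO\EuC}^+$ is an isomorphism too.

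The main obstacle is Step 1 — the non-degeneracy criterion of \cite{Ganatra2013, Ganatra2015}, whose proof rests on the Cardy relation and the categorical Calabi--Yau duality exchanging $\EuO\EuC$ and $\EuC\EuO$; granting that, everything else is a formal consequence of $\BbK_A\power{u}$-linearity, degeneration, exactness of localization, and the five lemma. The one internal point needing care is the identification in Step 2 of the induced first-page map with $\EuO\EuC\otimes\BbK_A\power{u}$, i.e.\ that $\widetilde{\EuO\EuC}^-$ lifts $\EuO\EuC$ compatibly with the $u$-adic filtrations, which is part of the construction of the cyclic open-closed map in \cite{Ganatra2015a}.
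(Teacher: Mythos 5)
Your proposal is correct and follows essentially the same route as the paper: surjectivity of $\EuO\EuC$ from the module structure plus non-degeneracy, injectivity from the non-degeneracy of the Mukai pairing (via smoothness and \cite[Theorem 1.4]{Shklyarov2012}) together with the compatibility \eqref{eqn:ocmuk}, $\EuC\EuO$ by duality with $\EuO\EuC$, and the cyclic variants by the comparison of the $u$-adic (Hodge) filtration spectral sequences. The paper compresses your Steps 2--3 into one sentence ("a comparison argument for the spectral sequences induced by their respective Hodge filtrations"); your spelled-out version --- degeneration forced by the injective map of first pages, graded Nakayama for $\widetilde{\EuO\EuC}^-$, localization for $\widetilde{\EuO\EuC}^\infty$, and the five lemma for $\widetilde{\EuO\EuC}^+$ --- is exactly the intended argument.
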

\begin{proof}
First we prove the result for $\EuO\EuC$.  
$\EuO\EuC$ contains the identity in its image by definition of non-degeneracy, and it is a map of $\QH^\bullet(X)$-modules by \cite[\S  2.4]{Perutz2015}, hence it is surjective. 
We prove that it is injective: suppose to the contrary that $\alpha \neq 0$ and $\EuO\EuC(\alpha) = 0$. 
Because $\EuF(X)$ is smooth, the Mukai pairing is non-degenerate by \cite[Theorem 1.4]{Shklyarov2012}.
Hence, there exists $\beta \in \HH_\bullet(\EuF(X))$ such that $\langle \alpha,\beta \rangle_{Muk} \neq 0$. 
By the result of \S  \ref{subsec:OCMuk}, it follows that $\langle \EuO\EuC(\alpha),\EuO\EuC(\beta) \rangle \neq 0$, hence $\EuO\EuC(\alpha) \neq 0$. 
This is a contradiction, so $\EuO\EuC$ is an isomorphism. 

It follows immediately that $\EuC\EuO$ is an isomorphism by the result of \cite[\S  2.5]{Perutz2015}, which shows that $\EuC\EuO$ is dual to $\EuO\EuC$, up to natural identifications of their domains and codomains.

It also follows immediately that $\widetilde{\EuO\EuC}^{+,-,\infty}$ are isomorphisms, by a comparison argument for the spectral sequences induced by their respective Hodge filtrations.
\end{proof}
\begin{rmk}
    The methods of \cite{Ganatra2013} (which were written for the wrapped
    Fukaya category), if developed in the setting of the relative Fukaya
    category, would give an alternate proof of this Theorem requiring only
    non-degeneracy of $\EuF(X)$. In particular, those methods show that
    smoothness of $\EuF(X)$ is a consequence of non-degeneracy, and hence a
    redundant hypothesis.  

    In the setting as above that $\EuF(X)$ is a priori {\em proper} as well
    as smooth, the existence and non-degeneracy of the Mukai pairing allows for
    the above simplified proof.  See also \cite{Abouzaid2012}.
\end{rmk}

\section{Mirror symmetry and Calabi-Yau structures} \label{sec:cystructures}

It is an idea first articulated by Kontsevich, and studied by Costello
\cite{Costello2007}, that an $\ainf$ category $\EuC$ equipped with a type of
cyclically symmetric duality called a {\em Calabi-Yau structure} should
determine a two-dimensional chain level topological field theory which attaches 
$\HH_{\bullet}(\EuC)$ to the circle, with operations controlled by chains on the (open, or
uncompactified) moduli space of punctured curves equipped with asymptotic
markers at each puncture.  Further, Calabi-Yau structures are the
first piece of input-data for a program to reconstruct the structure of an entire
cohomological field theory on $\HH_{\bullet}(\EuC)$, with operations controlled
by Deligne-Mumford compactified moduli space---see for instance
\cite{Kontsevich:Lefschetz2008} for a discussion, and \cite{Costello2009} for
related work.

In particular, suppose we have proved HMS: so we know there is a quasi-equivalence between the derived Fukaya category of $X$ and the derived category of coherent sheaves of $Y$. 
If we want to recover an isomorphism
of (closed string) cohomological field theories, we need to know which Calabi-Yau structures correspond under this quasi-equivalence. 
In this section, we explain how our Theorem \ref{thm:main} allows us to determine which Calabi-Yau structures match up under mirror symmetry; see Theorem \ref{thm:cystructures} below.

We make use below of definitions of and results about Calabi-Yau structures
developed by Konstevich-Soibelman and Konstevich-Vlassopoulous
\cite{Kontsevich2006, Kontsevich:uq}, and the categorical generalizations which
have been defined and studied in work of the first-named
author, in part joint with R. Cohen \cite{Ganatra2015, Cohen2015}.

\subsection{Smooth and proper Calabi-Yau (CY) structures}
It is now understood that
there are two types of Calabi-Yau structures, ones associated to {\it proper}
categories and ones associated to {\it smooth} categories. The chain
level 2-dimensional topological field theories which are associated to
Hochschild homology in either case are necessarily incomplete, but in different
respects: only operations with $\geq 1$ inputs or $\geq 1$ outputs respectively
are allowed \cite{Kontsevich2006, Kontsevich:uq} (these are sometimes called
`left positive' and `right positive' theories). For instance, the Hochschild
homology of a smooth, non-proper Calabi-Yau category does not admit trace maps
or pairings.

When $\EuC$ is both smooth and proper, it is a folk result that these two types
of Calabi-Yau structures are equivalent; see Proposition
\ref{smoothpropercyprop}. Moreover, in this case, Hochschild homology admits
operations as above with no restrictions on inputs or outputs. More broadly, it
is expected that a smooth and proper $\ainf$ category equipped with (either type of)
Calabi-Yau structure 
should be precisely the data required to determine an associated {\it
2-dimensional oriented extended field theory} in the sense of the Baez-Dolan
cobordism hypothesis \cite{Lurie:2009fk} (note for instance that Costello's
theorem \cite{Costello2007} also associates a partial extended, or {\it
open-closed} theory).

We use without detailed exposition the $\mathsf{dg}$ category
$[\EuC,\EuC]$ of $\ainf$ $\EuC\!-\!\EuC$ bimodules, for which there are now
many references (see e.g., \cite{Seidel2008c, Tradler2008, Ganatra2013,
Sheridan2015a}). We denote the (necessarily derived) morphism spaces in this
category by $\hom^{\bullet}_{\EuC\!-\!\EuC}$, and use the notation $ -
\otimes_{\EuC} -$ to refer to (derived) tensor product. There are
several canonical bimodules of particular interest:
\begin{itemize}
    \item The {\em diagonal bimodule} $\EuC_{\Delta}$ associates to a pair of
        objects $A,B$ the chain complex $\EuC_{\Delta}(A,B) =
        \hom_{\EuC}(A,B)$.

    \item For any pair of auxiliary objects $(K,L)$ the {\em Yoneda bimodule}
    $\EuY^l_K \otimes \EuY^r_L$ associates to a pair $(A,B)$
    the chain complex $\EuY^l_K \otimes \EuY^r_L(A,B):=
    \hom_{\EuC}(A, K) \otimes \hom_{\EuC}(L, B)$.

\item For any bimodule $\EuB$, the {\em proper (or linear) dual}
    $\EuB^{\vee}$ is, as a chain complex, the linear dual
    $\EuB^{\vee}(X,Y) := \hom_{\BbK}(\EuB(X,Y), \BbK)$ (see e.g.,
    \cite{Tradler2008} for the case of $\ainf$ algebras). If $\EuB$ is
    {\em proper}, meaning its cohomology groups $H^{\bullet}(\EuB(A,B))$ are
    finite-rank for any $A,B$, then $\EuB^{\vee}$ is proper too. We abbreviate
    $\EuC^{\vee}:= \EuC_{\Delta}^{\vee}$.

\item For any bimodule $\EuB$, the {\em smooth (or bimodule) dual}
    $\EuB^{!}$ is, as a chain complex 
     \begin{equation}
            \EuB^!(K,L) := \hom_{\EuC\!-\! \EuC}(\EuB, \EuY^l_K \otimes \EuY^r_L) \simeq \HH^{\bullet}(\EuC, \EuY^l_{K} \otimes \EuY^r_L).
        \end{equation}
    In the case of an ordinary (or $\mathsf{dg}$) bimodule $B$ over an ordinary/$\mathsf{dg}$ algebra $A$ one defines $B^! :=
    \hom_{A\!-\! A} (B, A \otimes A^{op})$ where the (derived) hom is taken using
    the outer bimodule structure on $A \otimes A^{op}$, and the bimodule structure
    on $A^!$ is induced from the inner bimodule structure on $A \otimes A^{op}$.
    For an $\ainf$ category, there is a similar explicit definition of the bimodule structure on
    $\EuB^!$, see \cite[Def. 2.41]{Ganatra2013}. 
       If $\EuB$ is perfect, meaning it is split-generated by Yoneda bimodules,
    then $\EuB^!$ is too.  
    Again we abbreviate $\EuC^!:= \EuC_{\Delta}^!$. 

\end{itemize}
\begin{rmk}
    In the literature, $\EuC^{\vee}$ and $\EuC^!$ are sometimes referred to as the {\em Serre} and {\em inverse Serre bimodules}, respectively.
\end{rmk}
Recall for what follows that positive and negative cyclic homology groups come
equipped with natural maps from and to Hochschild homology
\begin{align}
    pr: \HH_{\bullet}(\EuC) &\to \HC^+_{\bullet}(\EuC) \\
    i: \HC^-_{\bullet}(\EuC) &\to \HH_{\bullet}(\EuC).
\end{align}
These maps, which are models of the {\it projection onto (homotopy) orbits}
and {\it inclusion of (homotopy) fixed points} of an $S^1$ action, admit simple
chain-level descriptions: $pr(\alpha) = \alpha \cdot u^0$ and
$i(\sum_{j=0}^{\infty} \alpha_j u^j) = \alpha_0$. 

\begin{lem}
    If $\EuC$ is a proper $\ainf$ category over $\BbK$, there is an isomorphism
    between the linear dual of Hochschild homology and the space of bimodule
    morphisms from the diagonal bimodule $\EuC_{\Delta}$ to the {\em Serre bimodule}:
    \begin{equation}
        \label{properhhdual}
        \HH_\bullet(\EuC)^{\vee} = \hom_{\BbK}(\HH_\bullet(\EuC),\BbK) \stackrel{\sim}{\rightarrow} \hom_{\EuC\!-\!\EuC}(\EuC_{\Delta},\EuC^{\vee})
    \end{equation}
\end{lem}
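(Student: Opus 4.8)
The plan is to recognize \eqref{properhhdual} as an instance of the tensor--hom adjunction over the field $\BbK$, applied to bimodules over $\EuC$, and to use that $\hom_\BbK(-,\BbK)$ is exact in order to pass freely between homology and cohomology.

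First I would recall the standard homological identifications. Write $\EuC^e := \EuC \otimes \EuC^{\mathrm{op}}$, so that $\EuC$--$\EuC$ bimodules are $\EuC^e$-modules and $- \otimes_{\EuC^e} -$ denotes the associated (derived) tensor product; then $\HH_\bullet(\EuC) = H_\bullet(\EuC_\Delta \otimes_{\EuC^e}\EuC_\Delta)$, and for any bimodule $\EuN$ one has $\hom_{\EuC-\EuC}(\EuC_\Delta,\EuN) = \HH^\bullet(\EuC,\EuN)$. Both sides can be computed from a single bar (semifree) resolution $P \to \EuC_\Delta$: the source of \eqref{properhhdual} is $\HH_\bullet(\EuC)^\vee$ with $\HH_\bullet(\EuC) = H_\bullet(C_\bullet(\EuC))$ for the Hochschild chain complex $C_\bullet(\EuC) = P \otimes_{\EuC^e}\EuC_\Delta$, while $\hom_{\EuC-\EuC}(\EuC_\Delta,\EuN)$ is the cohomology of $\hom_{\EuC-\EuC}(P,\EuN)$. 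In both complexes the ``bar part'' --- the tensor factors coming from $P$ --- is literally the same; the two differ only in that $C_\bullet(\EuC)$ tensors the bar factors with the relevant morphism space $\EuC_\Delta(-,-)$, whereas $\hom_{\EuC-\EuC}(P,\EuN)$ maps the bar factors into the corresponding value $\EuN(-,-)$.

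Next I would specialize $\EuN := \EuC^\vee = \EuC_\Delta^\vee$, whose defining pointwise formula is $\EuC^\vee(X,Y) = \hom_\BbK(\EuC_\Delta(X,Y),\BbK)$ and whose $A_\infty$-bimodule structure maps are, by definition, the $\BbK$-linear duals of those of $\EuC_\Delta$ (see \cite{Ganatra2013, Sheridan2015a}). Applying the elementary adjunction $\hom_\BbK(U,\hom_\BbK(W,\BbK)) \cong \hom_\BbK(W\otimes U,\BbK)$ to each term, together with $(\bigoplus_i V_i)^\vee \cong \prod_i V_i^\vee$ on the direct sums over tuples of objects, identifies $\hom_{\EuC-\EuC}(P,\EuC^\vee)$ term by term with the $\BbK$-linear dual complex $C_\bullet(\EuC)^\vee$. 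Invariantly, this is the tensor--hom adjunction $\hom_{\EuC-\EuC}(\EuM,\EuN^\vee) \cong \hom_\BbK(\EuM \otimes_{\EuC^e}\EuN,\BbK)$ --- which in fact characterizes the dual bimodule $\EuN^\vee$ --- specialized to $\EuM=\EuN=\EuC_\Delta$; since $\BbK$ is a field, $\hom_\BbK(-,\BbK)$ is exact, so this descends to the derived level with no flatness hypothesis and no derived correction on the right-hand side. Finally, exactness of $\hom_\BbK(-,\BbK)$ also gives $H^\bullet(C_\bullet(\EuC)^\vee) \cong H_\bullet(C_\bullet(\EuC))^\vee = \HH_\bullet(\EuC)^\vee$, and composing the two isomorphisms yields \eqref{properhhdual}. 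Unwinding the adjunction shows that the resulting isomorphism is the natural pairing map: a bimodule cocycle $\phi\colon \EuC_\Delta \to \EuC^\vee$ is sent to the functional obtained by evaluating $\phi$ on, and contracting with, a Hochschild cycle.

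The only step requiring genuine verification is that the term-by-term identification $\hom_{\EuC-\EuC}(P,\EuC^\vee) \cong C_\bullet(\EuC)^\vee$ intertwines the Hochschild cochain differential (with coefficients in $\EuC^\vee$) with the transpose $b^\vee$ of the Hochschild chain differential $b$. This is essentially forced, since the bimodule structure maps of $\EuC^\vee$ are \emph{defined} as the linear duals of those of $\EuC_\Delta$, so the content is just matching Koszul signs in the conventions of \cite{Sheridan2015a}; this bookkeeping is the only --- and fairly mild --- obstacle in writing out a complete proof. Properness of $\EuC$ plays no role in the adjunction itself, which holds over any field; it enters only to ensure that $\EuC^\vee$ is again a \emph{proper} bimodule, so that the Serre bimodule lies in the same class of objects as $\EuC_\Delta$, which is the setting in which the lemma is subsequently applied.
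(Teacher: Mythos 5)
Your proposal is correct and follows essentially the same route as the paper, which simply invokes the canonical equivalence $\hom_{A\otimes A^{op}}(A,A^{\vee}) = \HH^{\bullet}(A,A^{\vee})\cong \HH_{\bullet}(A,A)^{\vee}$ and notes that the chain-level description carries over to $\ainf$ categories; your write-up just makes the tensor--hom adjunction and the exactness of $\hom_{\BbK}(-,\BbK)$ explicit. Your closing observation that properness is not needed for the isomorphism itself, only for the subsequent use of $\EuC^{\vee}$, is accurate.
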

\begin{proof}
    On the level of finite dimensional algebras the isomorphism is the
    canonical equivalence 
    $\hom_{A \otimes A^{op}}(A, A^{\vee}) := \HH^{\bullet}(A, A^{\vee}) \cong
    \HH_{\bullet}(A, A)^{\vee}$.  
    Similarly, there are straightforward chain-level descriptions of the isomorphism
    \eqref{properhhdual} for $\ainf$ categories $\EuC$.  
\end{proof}
\begin{defn}\label{properCY}
    If $\EuC$ is proper, an element $\phi \in \HH(\EuC)^{\vee}[-n]$ is said to
    be \emph{non-degenerate} if the corresponding morphism $\tilde{\phi} \in
    H^{0}(\hom_{\EuC\!-\!\EuC}(\EuC_{\Delta}, \EuC^{\vee}[-n]))$ is an isomorphism of bimodules.
    Equivalently, for any objects $K$ and $L$, the pairing
    \[
        \mathrm{Hom}_{\EuC}^\bullet(K,L) \otimes \mathrm{Hom}_{\EuC}^{n-\bullet}(L,K) \stackrel{\mu^2}{\to} \mathrm{Hom}^n_{\EuC}(K,K) \to \HH_{n}(\EuC) \stackrel{\phi}{\to} \BbK
    \]
    is non-degenerate.

    Let $\EuC$ be a proper $\ainf$ category. A \emph{weak proper Calabi-Yau
    (CY) structure} of dimension $n$ is a non-degenerate morphism
    $\phi: \HH(\EuC) \rightarrow \BbK$ of degree $-n$, or equivalently (the
    cohomology class of) a bimodule quasi-isomorphism $\phi: \EuC_{\Delta} \to
    \EuC^{\vee}[-n]$.\\

    A \emph{(strong) proper Calabi-Yau} structure is a morphism
    $[\tilde{\phi}]: \HC^+(\EuC) \rightarrow \BbK$ such that the composition
    $[\phi] = [\tilde{\phi}] \circ pr: \HH(\EuC) \rightarrow \BbK$ is a
    weak proper Calabi-Yau structure.
\end{defn}
A \emph{proper Calabi-Yau category} is a proper $\ainf$ category equipped with a
(strong) proper Calabi-Yau structure.

\begin{rmk}
Sometimes the word `compact' is used instead of `proper'.
\end{rmk}

\begin{rmk} 
    A closely related notion which appears in Costello's work
    \cite{Costello2007, Costello2009} is that of a {\it cyclic $\ainf$
    structure}; this is an $\ainf$ category equipped with a non-degenerate
    pairing on morphism spaces such that the induced correlation functions
    $\langle \mu^k(-, \ldots, -), - \rangle$ are strictly symmetric.
    Kontsevich-Soibelman \cite[Thm. 10.2.2]{Kontsevich2006} proved that in characteristic 0,
    any proper Calabi-Yau category is quasi-isomorphic to a (unique isomorphism
    class of) cyclic $\ainf$ category; in this sense Definition \ref{properCY}
    is a homotopical relaxment of the strict cyclicity condition. 
    
    Away from characteristic 0, cyclic $\ainf$ structures and Definition
    \ref{properCY} are very different, and it seems that the latter notion,
    involving cyclic homology is the correct notion (for instance, when the
    Fukaya category is defined over a non-characteristic zero field, it carries
    a strong proper Calabi-Yau structure \cite{Ganatra2015}).
\end{rmk}

There is an alternate notion of Calabi-Yau structure for a smooth, but not
necessarily proper category $\EuC$, due to Kontsevich and Vlassopolous
\cite{Kontsevich:uq} (see also
\cite{Kontsevich2006, Ginzburg:2007fk} for a weak version
of this structure, without homotopy-cyclic invariance).  
\begin{lem} \emph{(\cite[Remark 8.2.4]{Kontsevich2006}, \cite{Cohen2015})}
If $\EuC$ is a smooth $\ainf$ category, there is an isomorphism between
Hochschild homology and the space of (derived) bimodule morphisms from the
{\em inverse Serre bimodule} to the diagonal bimodule $\EuC_{\Delta}$:
\begin{equation}\label{smoothhh}
    \HH_{\bullet}(\EuC) \cong \hom^{\bullet}_{\EuC\!-\!\EuC}(\EuC^!, \EuC_{\Delta}) = \hom^0(\EuC^!, \EuC_{\Delta}[\bullet]).
\end{equation}
\end{lem}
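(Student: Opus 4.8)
The plan is to establish \eqref{smoothhh} by the smooth counterpart of the mechanism underlying the proper-case isomorphism \eqref{properhhdual}, with the roles of the Serre and inverse Serre bimodules interchanged and with Hochschild homology appearing directly rather than as a linear dual. Concretely, one expresses Hochschild homology as a derived self-tensor-product of the diagonal bimodule over the enveloping category $\EuC\otimes\EuC^{op}$, and then trades one of the two tensor factors for a $\hom$ out of the inverse Serre bimodule $\EuC^!$ --- a step which is legitimate exactly because smoothness makes $\EuC_\Delta$ (and hence $\EuC^!$) a perfect bimodule. I would begin from the standard identification
\[ \HH_{\bullet}(\EuC) \;\simeq\; \EuC_\Delta \otimes_{\EuC\!-\!\EuC} \EuC_\Delta, \]
where $\otimes_{\EuC\!-\!\EuC}$ is the derived tensor product of bimodules over $\EuC\otimes\EuC^{op}$; at the chain level this is the observation that the complex computing $\HH_\bullet(\EuC)$ arises by tensoring one copy of $\EuC_\Delta$ against the two-sided bar resolution of the other.

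Next I would invoke two formal consequences of perfectness. First: for any perfect bimodule $\EuB$ there is a natural quasi-isomorphism of functors on bimodules
\[ \EuB^! \otimes_{\EuC\!-\!\EuC} (-) \;\xrightarrow{\;\sim\;}\; \hom_{\EuC\!-\!\EuC}(\EuB, -), \]
where $\EuB^! = \hom_{\EuC\!-\!\EuC}(\EuB, \EuC\otimes\EuC^{op})$ is formed using the outer bimodule structure and carries the inner one; this is the bimodule incarnation of the elementary fact that a dualizable (equivalently, perfect) object $P$ satisfies $\hom(P, -) \simeq P^{\vee}\otimes(-)$. Second: when $\EuC_\Delta$ is perfect, so is $\EuC^!$ (as recalled in the excerpt), and biduality holds --- the evaluation map $\EuB \to (\EuB^!)^!$, which is always defined, is an isomorphism for perfect $\EuB$, so in particular $(\EuC^!)^! \simeq \EuC_\Delta$. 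Applying the first fact with $\EuB = \EuC^!$ and then the second yields
\[ \hom_{\EuC\!-\!\EuC}(\EuC^!, \EuC_\Delta) \;\simeq\; (\EuC^!)^! \otimes_{\EuC\!-\!\EuC} \EuC_\Delta \;\simeq\; \EuC_\Delta \otimes_{\EuC\!-\!\EuC} \EuC_\Delta \;\simeq\; \HH_{\bullet}(\EuC), \]
and a degree check shows that none of these isomorphisms introduces a shift, giving the graded statement \eqref{smoothhh} with the indexing as written.

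The main obstacle is not conceptual but is the bookkeeping of the two commuting $\EuC$-bimodule structures on $\EuC\otimes\EuC^{op}$ --- the outer one, used to form all the $\hom$ and $\otimes$ appearing above, versus the inner one, which supplies the residual bimodule structure that makes $\EuC^!$ (and $\EuB^!$) a bimodule --- together with the need to make every $\hom$ and $\otimes$ genuinely derived via explicit bar-type resolutions in the $\ainf$ setting. The clean way to prove the two formal facts there is to check them first on the generating Yoneda bimodules $\EuY^l_K \otimes \EuY^r_L$, for which $(\EuY^l_K \otimes \EuY^r_L)^!$ is again a Yoneda bimodule (with the two objects interchanged) and each assertion collapses to the Yoneda lemma, and then to propagate them along mapping cones and direct summands through the split-closed triangulated envelope of the category of bimodules --- which is exactly the closure in which, by smoothness, both $\EuC_\Delta$ and $\EuC^!$ lie. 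This Yoneda-plus-propagation argument is where smoothness is genuinely used, and it is also the content carried out carefully in \cite{Kontsevich2006, Cohen2015}.
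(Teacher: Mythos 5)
Your argument is essentially the paper's own proof: the authors likewise deduce \eqref{smoothhh} from the two equivalences $B^!\otimes_{A\!-\!A}P \xrightarrow{\sim} \hom_{A\!-\!A}(B,P)$ and $B \xrightarrow{\sim} (B^!)^!$, valid for perfect bimodules, applied to $B=P=\EuC_\Delta$ together with $\HH_\bullet(\EuC)\simeq \EuC_\Delta\otimes_{\EuC\!-\!\EuC}\EuC_\Delta$. Your additional remarks on checking the two facts on Yoneda bimodules and propagating through the split-closed triangulated envelope are a correct elaboration of the details the paper leaves implicit (deferring them to the cited references).
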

\begin{proof}
    For $\mathsf{dg}$ algebras, this follows from two natural maps which are
    both equivalences for any perfect bimodules $B$ and $P$ (in particular for
    $B = P = A_{\Delta}$ if $A$ is smooth): $B^!  \otimes_{A\!-\!A} P
    \stackrel{\sim}{\rightarrow} \hom_{A\!-\!A}(B, P)$, and $B
    \stackrel{\sim}{\rightarrow} (B^!)^!$.
\end{proof}
\begin{defn}\label{smoothcy}
    If $\EuC$ is smooth, an element $\sigma \in \HH_{-n}(\EuC)$ is said to be \emph{non-degenerate} if $\sigma$ corresponds under \eqref{smoothhh} to a bimodule quasi-isomorphism. Equivalently for any pair of objects $K, L$, capping with $\sigma$ should induce an isomorphism
    \[
    \cap \sigma: \EuC^!(K,L) = \HH^{\bullet}(\EuC, \EuY^l_{K} \otimes \EuY^r_L) \stackrel{\sim}{\to} \HH_{\bullet -n} (\EuC, \EuY^l_{K} \otimes \EuY^r_L) \cong \hom_{\EuC}(K,L).\]
    A \emph{weak smooth $n$-dimensional Calabi-Yau structure} is a non-degenerate element $[\Omega] \in \HH_{-n}(\EuC)$.
    A \emph{(strong) smooth $n$-dimensional Calabi-Yau structure} is an element
    $[\tilde{\Omega}] \in \HC^-_{-n}(\EuC)$ (or equivalently a morphism
    $[\tilde{\Omega}]: \BbK \rightarrow \HC^-(\EuC)$), such that the induced
    element of Hochschild homology $[\Omega] := i [\tilde{\Omega}]$ is a weak
    smooth $n$-dimensional Calabi-Yau structure.  
\end{defn}

When $\EuC$ is simultaneously smooth and proper, note that 
\begin{lem}\label{inversefunctors}
    The bimodules $\EuC^{\vee}$ and $\EuC^{!}$ induce mutually inverse
    endofunctors of $perf(\EuC)$.
\end{lem}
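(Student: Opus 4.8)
The plan is to realize the two endofunctors as the Serre and inverse-Serre functors of $perf(\EuC)$ and then invoke the uniqueness of the Serre functor. Write $S := -\otimes_\EuC\EuC^\vee$ and $T := -\otimes_\EuC\EuC^!$ (the superscript $^{-1}$ that we want to attach to the second one will be justified \emph{a posteriori}). Since $-\otimes_\EuC\EuC_\Delta$ is naturally isomorphic to the identity functor on $\EuC$-modules, it suffices to prove that $S$ and $T$ restrict to mutually inverse auto-equivalences of $perf(\EuC)$; equivalently, that there are bimodule quasi-isomorphisms $\EuC^\vee\otimes_\EuC\EuC^!\simeq\EuC_\Delta\simeq\EuC^!\otimes_\EuC\EuC^\vee$. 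Rather than computing these convolutions directly, I would route through duality isomorphisms, which avoids the worst of the bimodule bookkeeping.

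First I would check that $S$ and $T$ genuinely preserve $perf(\EuC)$. Smoothness of $\EuC$ says $\EuC_\Delta$ is a perfect bimodule, so $\EuC^!=\EuC_\Delta^!$ is perfect (as recorded in the discussion of $\EuB^!$ above), and tensoring with a perfect bimodule preserves perfect modules; hence $T$ preserves $perf(\EuC)$. For $S$: properness of $\EuC$ makes $\EuC^\vee=\hom_\BbK(\EuC_\Delta,\BbK)$ a proper bimodule, and since $\EuC$ is both smooth and proper, so is $\EuC\otimes\EuC^{op}$, where proper modules coincide with perfect ones; thus $\EuC^\vee$ is perfect and $S$ preserves $perf(\EuC)$ too. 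Next I would establish, for $M,N\in perf(\EuC)$, the two isomorphisms
\[ \hom_\EuC(M,\,N\otimes_\EuC\EuC^\vee)\;\simeq\;\hom_\EuC(N,M)^\vee \quad\text{and}\quad \hom_\EuC(M\otimes_\EuC\EuC^!,\,N)\;\simeq\;\hom_\EuC(N,M)^\vee. \]
The first is Serre duality for $perf(\EuC)$: one checks it directly on Yoneda modules $M=\EuY_K$, $N=\EuY_L$ using $\EuY_L\otimes_\EuC\EuC^\vee\simeq\hom_\EuC(L,-)^\vee$ and the Yoneda lemma (properness is exactly what makes this linear dual a legitimate object), and then extends to all of $perf(\EuC)$—the split-closed triangulated envelope of the Yoneda modules—since both sides are exact bifunctors commuting with the relevant homotopy colimits. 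The second isomorphism is the analogous statement for $\EuC^!=\EuC_\Delta^!$, recorded in \cite[Remark 8.2.4]{Kontsevich2006}, \cite{Cohen2015}; it can be proved in the same style, verifying it on Yoneda modules via the biduality quasi-isomorphism $(\EuC_\Delta^!)^!\simeq\EuC_\Delta$ (valid because $\EuC_\Delta$ is perfect) together with the hom--tensor identities used in the proof of the preceding lemma.

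Finally, the first identity says precisely that $S$ is a Serre functor on $perf(\EuC)$, and a Serre functor, when it exists, is automatically an auto-equivalence; let $G$ denote its quasi-inverse. Because $S$ is an equivalence, $G$ is simultaneously its left and right adjoint, so for $M,N\in perf(\EuC)$ we obtain $\hom_\EuC(GM,N)\simeq\hom_\EuC(M,N\otimes_\EuC\EuC^\vee)\simeq\hom_\EuC(N,M)^\vee$, the last step by Serre duality. Comparing with the second identity gives $\hom_\EuC(GM,N)\simeq\hom_\EuC(M\otimes_\EuC\EuC^!,N)$ naturally in $N$, so by the Yoneda lemma $GM\simeq M\otimes_\EuC\EuC^!$ naturally in $M$; that is, $T=-\otimes_\EuC\EuC^!$ is quasi-inverse to $S=-\otimes_\EuC\EuC^\vee$ on $perf(\EuC)$, which is the assertion. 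I expect the main obstacle to be the second duality isomorphism: unlike Serre duality it genuinely invokes the construction of the bimodule dual $(-)^!$ and the smoothness hypothesis, and keeping the two $\EuC$-actions and the variances straight in the chain-level argument (or pinning down the exact form of the cited statement in the $\ainf$ setting) is where the real work lies.
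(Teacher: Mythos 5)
Your proof is correct, but it takes a genuinely different route from the paper's. The paper argues directly at the level of bimodules, citing \cite[Prop.\ 20.5.5]{Ginzburg:2005aa} and producing the chain of quasi-isomorphisms
\[
\hom_{A\!-\!A}(A, A \otimes_{\BbK} A) \otimes_A A^{\vee} \simeq \hom_{A\!-\!A}(A, A^{\vee} \otimes_{\BbK} A) \simeq \hom_{A}\bigl( (A^{\vee})^{\vee} \otimes_A A, A\bigr) \simeq A,
\]
which exhibits $\EuC^{!}\otimes_\EuC\EuC^{\vee}\simeq\EuC_\Delta$ as bimodules: smoothness is used to pull the tensor factor inside the bimodule hom, and properness for the biduality $(A^{\vee})^{\vee}\simeq A$. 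You instead work in the module category: you characterize $-\otimes_\EuC\EuC^{\vee}$ as the Serre functor of $perf(\EuC)$, characterize $-\otimes_\EuC\EuC^{!}$ by the adjunction identity satisfied by its inverse, and conclude via uniqueness of adjoints and Yoneda. Both arguments consume the two finiteness hypotheses at exactly the same points, and your load-bearing step --- the identity $\hom_\EuC(M\otimes_\EuC\EuC^{!},N)\simeq\hom_\EuC(N,M)^{\vee}$, verified on Yoneda modules via $(\EuC_\Delta^{!})^{!}\simeq\EuC_\Delta$ --- when unwound is essentially the same hom--tensor manipulation as the paper's displayed chain; so the conceptual packaging is different but the computational core is not lighter. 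What your approach buys is that it makes the \emph{Serre / inverse Serre} terminology of the paper's remark literal and reduces the final comparison to a formal uniqueness statement. One caveat: the paper's subsequent use of this lemma in Proposition \ref{smoothpropercyprop} invokes invertibility of convolution with $\EuC^{\vee}$ on the category of bimodules (it needs that this convolution preserves bimodule quasi-isomorphisms), whereas your conclusion is a priori only about endofunctors of $perf(\EuC)$; to recover the bimodule quasi-isomorphism $\EuC^{!}\otimes_\EuC\EuC^{\vee}\simeq\EuC_\Delta$ from your statement one should evaluate the natural isomorphism $T\circ S\simeq\mathrm{id}$ on Yoneda modules and observe that it is natural in the second variable as well. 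This is routine but worth making explicit if your proof is to substitute for the paper's.
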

\begin{proof}
    See e.g., \cite[Prop. 20.5.5]{Ginzburg:2005aa} for a proof in the
case of $\mathsf{dg}$ algebras $A$, which essentially use both finiteness conditions on
$A$ and adjunctions to write a chain of quasi-isomorphisms of the form:
\[
    \hom_{A\!-\! A}(A, A \otimes_{\BbK} A) \otimes_A A^{\vee} \simeq \hom_{A\!-\!A}(A, A^{\vee} \otimes_{\BbK} A) \simeq \hom_{A}( (A^{\vee})^{\vee} \otimes_A A, A) \simeq A
\]
(where recall, as in the case of $\ainf$ categories and bimodules,
$\hom_{A\!-\!A}$ refers to derived bimodule morphisms, and $\otimes_A$ refers to the derived
tensor product).  The general case, for $\ainf$ categories $\EuC$, is
identical.
\end{proof}

Next, we recall the well-known fact, established by Hood and Jones, that
negative cylic homology and the $\BbK$ dual of positive cyclic homology are
$\BbK \power{u} $ dual: 
\begin{lem}[\cite{Hood:1987aa}]\label{cyclichoodjones}
There is a canonical isomorphism 
\begin{equation}
    \hom_{\BbK\power{u} } (\HC_{\bullet}^-(\EuC), \BbK \power{u} ) \cong \hom_{\BbK}(\HC^+_{\bullet}(\EuC), \BbK).
\end{equation}
\end{lem}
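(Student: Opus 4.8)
# Proof proposal for Lemma \ref{cyclichoodjones}

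The plan is to exhibit the duality at the chain level, working with explicit mixed-complex models, and then observe that the construction is canonical. First I would fix a chain-level model $(C_\bullet(\EuC), b, B)$ for the Hochschild/cyclic theory of $\EuC$ — for instance the normalized Hochschild complex with Connes' operator $B$ — so that $CC^-_\bullet(\EuC) = (C_\bullet(\EuC)\power{u}, b+uB)$ and $CC^+_\bullet(\EuC) = (C_\bullet(\EuC)\power{u}/u\cdot C_\bullet(\EuC)\power{u}, b+uB)$ in the conventions of \S\ref{subsec:occyc} (here I am using that the paper's $W^- = \BbK\power{u}$, $W^+ = W^\infty/W^-$). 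The key linear-algebra input is that for a $\BbK$-module $D$, one has a natural identification $\hom_{\BbK\power{u}}(D\power{u}, \BbK\power{u}) \cong \hom_\BbK(D,\BbK)\power{u}$ — a $\BbK\power{u}$-linear functional on $D\power{u}$ is determined by its values on $D$, and continuity/$u$-adic completeness makes the assignment a bijection — together with the fact that $\BbK\power{u}$-linear duality interchanges the $u$-adic filtration quotient $D\power{u}/uD\power{u}$ (which computes $CC^+$) with the submodule $\hom_\BbK(D,\BbK)\cdot\{u^0\} \subset \hom_\BbK(D,\BbK)\power{u}$ in the appropriate sense.

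Concretely, I would write down the pairing
\[ \langle -,-\rangle: CC^-_\bullet(\EuC)\power{u}\text{-dual} \times CC^+_\bullet(\EuC) \to \BbK, \qquad (\textstyle\sum_i \phi_i u^i,\ \textstyle\sum_{j\ge 0}\alpha_j u^j \bmod u^{\ge 1}) \mapsto \textstyle\sum_{i} \phi_{?}(\alpha_{?}), \]
arranging the index bookkeeping so that the contraction lands in degree zero and so that the differential $b+uB$ on one side is adjoint to the differential on the other; this is the usual ``$u \leftrightarrow u^{-1}$'' duality between the completed ($u$-power-series) and co-completed ($u$-Laurent-tails modulo $\BbK\power{u}$) sides of the $S^1$-equivariant picture. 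Checking that this pairing is a chain map — i.e. that $\langle (b+uB)x, y\rangle \pm \langle x, (b+uB)y\rangle = 0$ — is the routine but essential verification; it then induces the claimed isomorphism on homology provided one knows the relevant complexes are suitably bounded/degreewise finite so that dualization commutes with taking homology. Since $\EuC$ is not assumed proper here, degreewise finiteness of $C_\bullet(\EuC)$ need not hold; the standard fix (as in Hood--Jones \cite{Hood:1987aa}) is that $\BbK\power{u}$-linear dualization of the mixed complex is exact because $\BbK\power{u}$ is (faithfully) flat and the complexes are $u$-adically complete and filtered with the right finiteness in the $u$-direction, so a spectral-sequence or direct filtration argument gives the isomorphism on homology.

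The main obstacle I anticipate is precisely this last point: getting the dualization to commute with homology without a global properness hypothesis. One must set up the $u$-adic (Hodge) filtrations on $CC^\pm$ carefully, identify the associated graded of each with copies of $HH_\bullet(\EuC)$ and $HH_\bullet(\EuC)^\vee$ respectively, verify that the pairing is filtered and induces the obvious perfect pairing on associated graded pieces (this is just ordinary $\BbK$-linear duality of vector spaces, degree by degree), and then invoke completeness of both filtrations to conclude the pairing on the nose is perfect. Everything else — constructing the chain-level pairing, checking the sign-compatible adjunction of $b+uB$, and confirming canonicity (the pairing makes no choices beyond the mixed-complex model, and different models are connected by quasi-isomorphisms compatible with it) — is bookkeeping of the kind carried out in \cite{Hood:1987aa} and in \cite{Sheridan2015a}, to which I would refer for the detailed formulae.
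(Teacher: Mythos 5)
Your proposal is correct in outline but takes a genuinely different route from the paper's proof. You work at the chain level: fix a mixed-complex model, write down an explicit pairing between the $\BbK\power{u}$-linear dual of the negative cyclic complex and the positive cyclic complex, check that $b+uB$ is self-adjoint for it, and descend to homology by a filtration argument --- essentially the original Hood--Jones strategy. The paper instead gives a one-line module-theoretic argument applied directly to the homology groups: the $\BbK\power{u}$-module isomorphism $\BbK\power{u} \cong \hom_{\BbK}(\BbK\laurents{u}/u\BbK\power{u},\BbK)$ exhibits $\BbK\power{u}$ as a coinduced module, so tensor--hom adjunction gives $\hom_{\BbK\power{u}}(\HC^-_\bullet(\EuC),\BbK\power{u}) \cong \hom_{\BbK}(\HC^-_\bullet(\EuC)\otimes_{\BbK\power{u}}W^+,\BbK)$, and the right-hand side is identified with $\hom_{\BbK}(\HC^+_\bullet(\EuC),\BbK)$. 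The paper's route buys brevity and entirely sidesteps your main anticipated obstacle (commuting dualization with passage to homology), because it never dualizes a complex; the residual point it must absorb is instead the identification $\HC^-_\bullet(\EuC)\otimes_{\BbK\power{u}}W^+\cong\HC^+_\bullet(\EuC)$, which is the same $u$-torsion/universal-coefficients question in another guise. Your route buys an explicit chain-level pairing, which is closer to what is actually used downstream (e.g.\ in Proposition \ref{smoothpropercyprop} and in \eqref{geometricproperCYstructure}). One correction to your justification of the key step: over the field $\BbK$ the functor $\hom_{\BbK}(-,\BbK)$ is exact with no finiteness or flatness input, so the positive-cyclic side needs no ``fix''; the genuinely inexact functor is $\hom_{\BbK\power{u}}(-,\BbK\power{u})$, and ``faithful flatness of $\BbK\power{u}$'' is not the relevant property --- the correct repair is exactly the coinduced-module identity above, which converts the $\BbK\power{u}$-dual into a $\BbK$-dual before homology is taken.
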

\begin{proof}
    Use the fact that  as $\BbK \power{u} $-modules, $\BbK \power{u} \cong \hom_{\BbK } ( \BbK ( ( u ) )  / u \BbK\power{u}, \BbK)$, coupled with the adjunction $\hom_{\BbK\power{u} } (\HC_{\bullet}^-(\EuC), \hom_{\BbK  } ( \BbK ( ( u ) )  / u \BbK\power{u}, \BbK)) \cong \hom_{ \BbK}(\HC_{\bullet}^-(\EuC) \otimes_{\BbK \power{u} }  \BbK ( ( u ) )  / u \BbK\power{u}, \BbK)$.
\end{proof}

Using this fact, one can compare (strong) smooth and proper Calabi-Yau structures.
\begin{prop} \emph{(compare \cite{Kontsevich:uq}; in the weak case, compare \cite[Prop.  3.2.4]{Ginzburg:2007fk}, \cite[Conj. 10.2.8]{Kontsevich2006})}
    \label{smoothpropercyprop}
    On a smooth and proper category $\EuC$, proper and smooth $n$-dimensional (strong)
    Calabi-Yau structures are in bijection.
\end{prop}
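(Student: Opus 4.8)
The plan is to restate both kinds of Calabi--Yau structure in terms of cyclic homology and bimodule duality, and then to play the Hood--Jones duality of Lemma~\ref{cyclichoodjones} against the self-duality of negative cyclic homology that is available for a smooth and proper category. I would begin with the \emph{weak} case. By \eqref{properhhdual} a weak proper Calabi--Yau structure is the cohomology class of a bimodule quasi-isomorphism $\EuC_{\Delta}\xrightarrow{\sim}\EuC^{\vee}[-n]$, and by \eqref{smoothhh} a weak smooth Calabi--Yau structure is the cohomology class of a bimodule quasi-isomorphism $\EuC^{!}\xrightarrow{\sim}\EuC_{\Delta}[-n]$. Since $\EuC$ is smooth and proper, Lemma~\ref{inversefunctors} provides mutually inverse equivalences $(-)^{\vee}$ and $(-)^{!}$, in particular canonical quasi-isomorphisms $(\EuC^{\vee})^{!}\simeq\EuC_{\Delta}\simeq(\EuC^{!})^{\vee}$; applying $(-)^{!}$ to a weak proper structure and composing with these (keeping track of the degree shift) produces a weak smooth structure, and $(-)^{\vee}$ is an inverse procedure. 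One also checks that the nondegenerate Mukai-type pairings on $\HH_{\bullet}(\EuC)$ induced by the two structures agree under this identification. Thus weak proper and weak smooth Calabi--Yau structures are in canonical, pairing-preserving bijection.

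For the \emph{strong} case, recall that a strong proper Calabi--Yau structure is an $S^{1}$-invariant trace $[\tilde\phi]\in\hom_{\BbK}(\HC^{+}_{\bullet}(\EuC),\BbK)$ with $[\tilde\phi]\circ pr$ a weak proper structure, while a strong smooth Calabi--Yau structure is a class $[\tilde\Omega]\in\HC^{-}_{-n}(\EuC)$ with $i[\tilde\Omega]$ a weak smooth structure. I would build a natural isomorphism $\HC^{-}_{-n}(\EuC)\cong\hom_{\BbK}(\HC^{+}_{n}(\EuC),\BbK)$ carrying one lifting problem to the other, out of two pieces. First, the Hood--Jones isomorphism (Lemma~\ref{cyclichoodjones}) identifies $\hom_{\BbK}(\HC^{+}_{\bullet}(\EuC),\BbK)$ with $\hom_{\BbK\power{u}}(\HC^{-}_{\bullet}(\EuC),\BbK\power{u})$, and I would check that it is compatible with the Connes periodicity sequences, so that the dual of $pr$ corresponds, after reduction modulo $u$, to the dual of $i$. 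Second, for a smooth \emph{and} proper category the higher residue pairing $\langle-,-\rangle_{res}$ \cite{Shklyarov2013} (cf.\ Proposition~\ref{prop:vhscat}) --- which is defined without reference to a Calabi--Yau structure and reduces modulo $u$ to the Mukai pairing, hence is nondegenerate once $\EuC$ is smooth --- gives a canonical $\BbK\power{u}$-linear isomorphism $\HC^{-}_{\bullet}(\EuC)\xrightarrow{\sim}\hom_{\BbK\power{u}}(\HC^{-}_{\bullet}(\EuC),\BbK\power{u})$ (up to degree shift) refining the Mukai self-duality of $\HH_{\bullet}(\EuC)$ from the weak case. Composing the inverse of this with Hood--Jones sends $[\tilde\Omega]\in\HC^{-}_{-n}(\EuC)$ to an element of $\hom_{\BbK}(\HC^{+}_{n}(\EuC),\BbK)$; by the compatibilities just listed, the assertion that $i[\tilde\Omega]$ is a weak smooth structure translates exactly into the assertion that $[\tilde\phi]\circ pr$ is the corresponding weak proper structure, so this assignment restricts to a bijection between strong smooth and strong proper Calabi--Yau structures, refining the weak bijection.

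I expect the main obstacle to be the verification, in the strong case, that the higher residue pairing on $\HC^{-}_{\bullet}(\EuC)$ really is a \emph{perfect} pairing for a smooth proper category, and that it fits compatibly with both the Hood--Jones duality and the bimodule duality $(-)^{\vee}\leftrightarrow(-)^{!}$ --- that is, that the resulting square of identifications commutes --- so that the two lifting conditions genuinely correspond. Both finiteness hypotheses enter here: properness makes the Mukai pairing (the residue pairing modulo $u$) defined, smoothness makes it nondegenerate, and to upgrade ``nondegenerate modulo $u$'' to ``perfect over $\BbK\power{u}$'' one uses that $\HC^{-}_{\bullet}(\EuC)$ is free of finite rank over $\BbK\power{u}$, i.e.\ that the noncommutative Hodge--de Rham spectral sequence degenerates (automatic in characteristic zero). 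Granting this compatibility bookkeeping, the proposition follows formally from the two bijections above.
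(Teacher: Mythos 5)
Your proposal is correct and follows essentially the same route as the paper's proof: the bijection of ambient spaces comes from the higher residue pairing composed with the Hood--Jones duality of Lemma~\ref{cyclichoodjones} (an isomorphism because its $u=0$ reduction is Shklyarov's nondegenerate Mukai pairing), and the matching of nondegeneracy conditions comes from the invertibility of $\EuC^{\vee}$ and $\EuC^{!}$ as in Lemma~\ref{inversefunctors}. The compatibility you flag as the main obstacle is exactly the point the paper isolates, namely that the partial adjoint of the Mukai pairing $\HH(\EuC)\to\HH(\EuC)^{\vee}$ is cohomologically equivalent to convolution with $\EuC^{\vee}$ under the identifications \eqref{smoothhh} and \eqref{properhhdual}.
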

\begin{proof}
    The higher residue pairing induces a map 
    \begin{eqnarray*}
	\HC^-(\EuC) &\rightarrow & \hom_{\BbK \power{u} } (\HC^-(\EuC), \BbK \power{u} ) \cong \hom_{\BbK} (\HC^+(\EuC), \BbK) \\
	\alpha & \mapsto & \langle -,\alpha \rangle_{res},
\end{eqnarray*}
    where the
    isomorphism between the $\BbK\power{u} $ dual of negative cyclic homology
    and the $\BbK$ dual of cyclic homology is by
    Lemma \ref{cyclichoodjones}.
    This map is an isomorphism whenever the $u=0$ reduced map $\HH(\EuC)
    \rightarrow \HH(\EuC)^{\vee}$ is, for instance when $\EuC$ is smooth and
    proper (\cite[Theorem 1.4]{Shklyarov2012}, or see \cite{Sheridan2015a} for the
    $\ainf$ case).  So an element of $\HC^-(\EuC)$ induces a unique morphism
    $\HC^+(\EuC) \rightarrow \BbK$ and vice versa.

    It remains to compare the non-degeneracy conditions for smooth and proper
    Calabi-Yau structures. This is an immediate consequence of the following
    fact: under the identifications $\hom_{\EuC\!-\!\EuC}(\EuC^!,
    \EuC_{\Delta}) \cong \HH(\EuC)$, $\hom_{\EuC\!-\!\EuC}(\EuC_{\Delta},
    \EuC^{\vee}) \cong \HH(\EuC)^{\vee}$, the partial adjoint of the Mukai
    pairing, mapping $\HH(\EuC) \rightarrow \HH(\EuC)^{\vee}$ is cohomologically
    equivalent to the map on morphisms induced by convolving with
    $\EuC^{\vee}$:
    \begin{equation}
        \hom_{\EuC\!-\!\EuC}(\EuC^!, \EuC_{\Delta}) \stackrel{ \cdot \otimes_{\EuC} \EuC^{\vee}}{\rightarrow} \hom_{\EuC\!-\!\EuC}(\EuC^! \otimes_{\EuC} \EuC^{\vee}, \EuC_{\Delta} \otimes_{\EuC} \EuC^{\vee}) \simeq \hom_{\EuC\!-\!\EuC}(\EuC_{\Delta},\EuC^{\vee}).
    \end{equation}
    By Lemma \ref{inversefunctors} $\EuC^{\vee}$ is invertible, and in
    particular it sends bimodule quasi-isomorphisms to bimodule
    quasi-isomorphisms (which are the relevant notions of non-degenerate in
    each case).
\end{proof}

Given a smooth category $\EuC$, the inclusion $\EuC \hookrightarrow \twsplit
\EuC$ is a Morita equivalence, and hence induces an isomorphism on
$\HC_{\bullet}^{+/-/\infty}$.  This isomorphism unsurprisingly can be shown to
preserve non-degenerate elements, so one can unambiguously talk about
Calabi-Yau structures on $\EuC$ or $\twsplit \EuC$:
\begin{prop}[`Morita invariance of Calabi-Yau structures', see \cite{Cohen2015}]
If $\EuC$ is smooth (resp. proper), the inclusion $\EuC \hookrightarrow
\twsplit \EuC$ induces an isomorphism of spaces of smooth (resp. proper)
Calabi-Yau structures.  
\end{prop}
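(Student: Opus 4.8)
The plan is to reduce the statement to two inputs and then observe that everything else is formal. The first input is that $\Phi\colon\EuC\hookrightarrow\twsplit\EuC$ is a Morita equivalence, so it induces isomorphisms on $\HH_\bullet$ and on $\HC^{\pm}_\bullet$ which are compatible with the natural maps $pr\colon\HH\to\HC^+$ and $i\colon\HC^-\to\HH$ (as recalled after Proposition~\ref{prop:vhscat}); in particular $\twsplit\EuC$ is again smooth, resp. proper, since both conditions are Morita-invariant. The second input is that $\Phi$ induces an equivalence between the relevant (perfect, resp. proper) dg categories of bimodules over $\EuC$ and over $\twsplit\EuC$, carrying $\EuC_\Delta$ to $(\twsplit\EuC)_\Delta$ and Yoneda bimodules to Yoneda bimodules, and therefore commuting with the formation of the Serre bimodule $(-)^\vee$ and the inverse Serre bimodule $(-)^!$. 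Granting these, the non-degeneracy conditions in Definitions~\ref{properCY} and \ref{smoothcy} — each of which is the requirement that a certain bimodule morphism be a quasi-isomorphism — are transported isomorphically, which is all that needs proving.

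\textbf{Weak case.} First I would treat weak Calabi-Yau structures. In the proper case, a weak structure is an element $\phi\in\HH(\EuC)^\vee[-n]$ whose image under the canonical identification \eqref{properhhdual} is a bimodule quasi-isomorphism $\EuC_\Delta\to\EuC^\vee[-n]$. Since \eqref{properhhdual} is natural with respect to $\Phi$, the bimodule equivalence from the second input carries that morphism to the morphism $(\twsplit\EuC)_\Delta\to(\twsplit\EuC)^\vee[-n]$ corresponding to the image of $\phi$ under the Morita isomorphism on $\HH^\vee$; an equivalence of categories preserves quasi-isomorphisms, so $\phi$ is non-degenerate iff its image is, and symmetrically in the reverse direction. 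The smooth case is identical, using \eqref{smoothhh}, the inverse Serre bimodule $\EuC^!$, and an element $\sigma\in\HH_{-n}(\EuC)$ in place of $\phi$. This realises the desired bijection on weak structures as (the relevant component of) the Morita isomorphism on $\HH$.

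\textbf{Strong case.} Next, a strong proper CY structure on $\EuC$ is a functional $[\tilde\phi]\colon\HC^+(\EuC)\to\BbK$ such that $[\tilde\phi]\circ pr$ is a weak structure, and a strong smooth CY structure is an element $[\tilde\Omega]\in\HC^-_{-n}(\EuC)$ such that $i[\tilde\Omega]$ is a weak structure. The first input supplies isomorphisms $\HC^+(\EuC)\cong\HC^+(\twsplit\EuC)$ and $\HC^-(\EuC)\cong\HC^-(\twsplit\EuC)$ intertwining $pr$ and $i$ with the isomorphism on $\HH$. Transporting $[\tilde\phi]$, resp. $[\tilde\Omega]$, along these isomorphisms therefore yields a candidate strong structure on $\twsplit\EuC$ whose underlying weak structure is precisely the image of the original one under the $\HH$-isomorphism, hence non-degenerate by the previous paragraph. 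Since this construction is manifestly inverse to pullback along $\Phi$, the induced map on spaces of strong smooth (resp. proper) CY structures is a bijection.

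\textbf{Main obstacle.} The work is in the second input: making precise that $\Phi$ induces an equivalence on the pertinent bimodule categories commuting with $(-)^\vee$ and $(-)^!$, and that the identifications \eqref{properhhdual} and \eqref{smoothhh} are natural for it. The delicate point is finiteness bookkeeping — $(-)^\vee$ behaves well only on proper bimodules and $(-)^!$ only on perfect bimodules — so one must first verify that $\Phi$ restricts to an equivalence on the subcategory of proper (resp. perfect) bimodules, which holds because properness and perfectness of a bimodule are Morita-invariant, before the duality functors can even be transported. Once this is set up (as in \cite{Cohen2015}), the remainder of the argument is purely formal.
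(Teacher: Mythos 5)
Your proposal is correct and follows the same route the paper indicates: the paper gives no proof of this proposition, only the one-line observation that the Morita isomorphism on Hochschild and cyclic homology ``unsurprisingly can be shown to preserve non-degenerate elements,'' deferring the details to \cite{Cohen2015}. Your expansion --- transporting the bimodule quasi-isomorphism conditions of Definitions \ref{properCY} and \ref{smoothcy} through the induced equivalence of bimodule categories (which carries diagonal to diagonal and commutes with $(-)^{\vee}$ and $(-)^{!}$), and then handling strong structures via the compatibility of $pr$ and $i$ with the Morita isomorphisms on $\HC^{\pm}$ --- is exactly the standard argument that citation supplies, including the correct identification of the one genuinely non-formal point (that the duality functors are only defined on, and preserved along, the proper resp. perfect bimodule subcategories).
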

The above Proposition serves as a motivation for the following definition: 
\begin{defn}\label{smoothcyequivalence}
Let $\EuC$ and $\EuD$ be smooth and proper $\ainf$ categories equipped with
smooth Calabi-Yau structures $[\tilde{\Omega}_{\EuC}]$,
$[\tilde{\Omega}_{\EuD}]$. We say that $\EuC$ and $\EuD$ are {\it Calabi-Yau (Morita)
equivalent} if there is an $\ainf$ quasi-equivalence $\mathcal{F}: \twsplit\EuC
\rightarrow \twsplit\EuD$ sending $[\tilde{\Omega}_{\EuC}]$ to $[\tilde{\Omega}_{\EuD}]$. 
\end{defn}
\begin{rmk}\label{rmk:equivalenceproperCY}
    There is an obvious notion of Calabi-Yau equivalence for proper Calabi-Yau
    structures, namely that the induced map $\mathcal{F}^*$ from
    $\hom_{\BbK}(\HC^+(\EuD),\BbK)$ to $\hom_{\BbK}(\HC^+(\EuC), \BbK)$ should
    preserve proper Calabi-Yau elements. The methods of Proposition
    \ref{smoothpropercyprop} imply that under the correspondence between proper
    and smooth Calabi-Yau structures, this notion is equivalent to that of
    Definition \ref{smoothcyequivalence}.
\end{rmk}

Going forward, we may sometimes refer to the equivalent data of a smooth or
proper CY structure on a smooth and proper category $\EuC$ as simply a
\emph{Calabi-Yau structure}.

\subsection{Geometric Calabi-Yau structures}

Suppose now that the hypotheses of Theorem \ref{thm:main} hold for a pair $(X,
Y)$.

We observe, following \cite{Ganatra2015}, that the Fukaya category $\EuF$ of $X$
comes equipped with a canonical geometric $n$-dimensional (strong) proper
Calabi-Yau structure, which can be characterized using the cyclic open-closed
map.
Specifically, one defines
\begin{equation}
    \widetilde{\phi}_{\EuF}: \HC^+(\EuF) \to \BbK[-n]
\end{equation}
to correspond to 
\begin{equation}\label{geometricproperCYstructure}
    \left( \widetilde{\EuO \EuC}^-( - ) ,  \tilde{e}\right):
    \HC^-(\EuF) \to \BbK\power{u} [ -n] 
\end{equation}
via Lemma \ref{cyclichoodjones}, where $\tilde{e} = e \cdot u^0 \in H^\bullet(X;
\C) \otimes_{\C} \BbK_A \power{u} [n]$, and $( \cdot , \cdot )$ is the pairing
described in Definition \ref{defn:avhs}.  Concretely, if $\widetilde{\EuO\EuC} = \sum_{i \ge 0} u^i\cdot \widetilde{\EuO\EuC}_i$, one extends by linearity the following map on basic generators:
\begin{equation}
    \widetilde{\phi}_\EuF\left(u^{-i}\cdot \alpha \right) := (-1)^{n(n+1)/2} \int_X \widetilde{\EuO \EuC}_i(\alpha).
\end{equation}
So this is picking out the portion of $\widetilde{\EuO \EuC}_i
(\alpha)$ that is hitting the top class in $u^0 \cdot H^{2n}(X)$.  The fact
that the induced map $\phi = pr \circ \tilde{\phi}: \HH_{\bullet}(\EuC)
\rightarrow \BbK$ is
non-degenerate is a well-known consequence of Poincar\'{e}-Floer
duality for Floer cohomology of compact Lagrangians: the induced pairing
\[ HF^\bullet(K,L) \otimes HF^{n-\bullet}(L,K) \to \BbK\] is equal to the usual
non-degenerate pairing on Floer cohomology (see, e.g., \cite[Lemma 2.4]{Sheridan2013};
different geometric technical hypotheses are assumed, but the proof carries
over verbatim). 

By Proposition \ref{smoothpropercyprop}, since $\EuF$ is smooth and
proper, there is a unique smooth CY structure corresponding to
$\tilde{\phi}_\EuF$. It is convenient as in \cite{Ganatra2015} to characterize
this smooth CY structure via the negative cyclic open-closed
map as follows: since the negative cyclic open-closed map $\widetilde{\EuO
\EuC}^-$ is an isomorphism by Theorem \ref{thm:ociso}, there is a unique
element $\tilde{\Omega} \in \HC^-_{-n}(\EuF)$ with $\widetilde{\EuO
\EuC}^-(\tilde{\Omega}) = \tilde{e}$. Since $\widetilde{\EuO \EuC}^-$ intertwines
polarizations by Theorem \ref{thm:cycoc}, we have
\begin{equation}
    \langle  -,\tilde{\Omega}\rangle_{res} = \left(\widetilde{\EuO \EuC}^-( - ), \tilde{e} \right) = \widetilde{\phi}_\EuF.
\end{equation}   
Therefore, $\langle
-,\tilde{\Omega}\rangle_{res}$ corresponds to $\tilde{\phi}_{\EuF}$ via
the isomorphism of Lemma
\ref{cyclichoodjones}. By Proposition \ref{smoothpropercyprop}, it follows that
$\tilde{\Omega}$ gives the desired smooth CY structure on $\EuF$.
\begin{rmk}
    The proof that an element $\tilde{\Omega}$ satisfying $\widetilde{\EuO
    \EuC}^-(\tilde{\Omega}) = \tilde{e}$ determines a geometric smooth CY structure for
    $\EuF$ was first given in \cite{Ganatra2013, Ganatra2015} 
    in the setting of the wrapped Fukaya category (which is not proper, hence
    does not admit a Mukai or residue pairing). While the methods implemented
    there could also be implemented in the setting of a compact Calabi-Yau
    target, the presence of the Mukai and residue pairings in this smooth and
    proper situation allows for a simplified proof (using Proposition
    \ref{smoothpropercyprop}).
\end{rmk}

By the discussion in \S \ref{subsec:QHnorm}, the element $\tilde{e}$ is
Hodge-theoretically {\it normalized}, in the sense of Definition
\ref{defn:normOm}. Hence, as $\widetilde{\EuO \EuC}^-$ is an isomorphism of \vhs{} 
by Theorems \ref{thm:cycoc} and \ref{thm:ociso}, and isomorphisms of \vhs{} preserve the complex vector
space of normalized volume forms, we see that
\begin{cor}\label{fukayacynormalized}
The canonical smooth Calabi-Yau structure on the Fukaya category is
Hodge-theoretically normalized.  
\end{cor}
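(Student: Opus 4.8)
The plan is to deduce the corollary directly from the fact that $\widetilde{\EuO\EuC}^-$ is an isomorphism of \vhs{} together with the description of the normalized volume forms of the $A$-model \vhs{} from \S \ref{subsec:QHnorm}. First I would record the standing hypotheses: under the assumptions of Theorem \ref{thm:main}, the Fukaya category $\EuF$ is smooth (because $\dbdg{Y}$ is, $Y$ being a smooth scheme) and non-degenerate (by Remark \ref{rmk:automaticsplitgeneration}, using maximal unipotence of $Y$). Hence, by Theorems \ref{thm:cycoc} and \ref{thm:ociso}, the negative cyclic open-closed map
\[ \widetilde{\EuO\EuC}^-: \HC^-_\bullet(\EuF) \xrightarrow{\sim} \EuH^A(X) \]
is an isomorphism of polarized pre-\vhs, and since $\EuH^A(X)$ is a genuine polarized \vhs{}, so is $\HC^-_\bullet(\EuF)$. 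Moreover $\EuH^A(X)$ is an object of $\EuC_n$ by Lemma \ref{lem:avhsDn}, hence Hodge-Tate in the sense of Definition \ref{defn:hodgetate} with $\widetilde{\EuV}_0^{(n/2)}$ one-dimensional; transporting along $\widetilde{\EuO\EuC}^-$, the same holds for $\HC^-_\bullet(\EuF)$, so Definitions \ref{defn:volform} and \ref{defn:normOm} apply to it. By the construction in the preceding paragraph, the canonical smooth Calabi-Yau structure $\tilde{\Omega} \in \HC^-_{-n}(\EuF)$ is the unique element with $\widetilde{\EuO\EuC}^-(\tilde{\Omega}) = \tilde{e}$; in particular it is a volume form in the sense of Definition \ref{defn:volform}.

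Next I would observe that being a \emph{normalized} volume form is an intrinsic property of a Hodge-Tate \vhs{} and is therefore preserved by any isomorphism of \vhs. Indeed, all of the ingredients entering Definition \ref{defn:normOm} --- the regular singular point and its unipotent monodromy, the monodromy weight filtration $\EuW_{\le \bullet}$, the Deligne (canonical) extension $\widetilde{\EuV}$, the splittings $\widetilde{\EuV}^{(p)}$, and hence the canonical trivialization \eqref{eqn:Wflatbas} identifying $\widetilde{\EuV}_0^{(n/2)} \otimes_\C \BbK$ with $\EuF^{\ge n/2}\EuV$ --- are built functorially out of the data $(\EuV,\EuF^{\ge \bullet},\nabla)$. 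An isomorphism of \vhs{} therefore intertwines these trivializations, and in particular carries constant elements to constant elements; so $\widetilde{\EuO\EuC}^-$ restricts to an isomorphism between the complex vector spaces of normalized volume forms of the two \vhs.

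Finally, I would invoke the computation recorded in \S \ref{subsec:QHnorm} (immediately following Lemma \ref{lem:avhsDn}): the normalized volume forms of $\EuH^A(X)$ are exactly the complex multiples of the unit $e \in H^0(X;\C)$, so $\tilde{e} = e \cdot u^0$ is normalized. Since $\widetilde{\EuO\EuC}^-(\tilde{\Omega}) = \tilde{e}$ and $\widetilde{\EuO\EuC}^-$ preserves normalized volume forms, $\tilde{\Omega}$ is a normalized volume form, which is precisely the assertion that the canonical smooth Calabi-Yau structure on $\EuF$ is Hodge-theoretically normalized. The only step that demands genuine care in a full write-up is the middle one --- checking the functoriality of the trivialization \eqref{eqn:Wflatbas} under morphisms of \vhs, equivalently that the notions `volume form' and `normalized' make sense for the abstract \vhs{} $\HC^-_\bullet(\EuF)$ and transport correctly; but this is immediate once one knows that $\widetilde{\EuO\EuC}^-$ respects the connection (hence monodromy, weight filtration, and Deligne extension) and the $\BbK\power{u}$-module structure (hence the Hodge filtration, via Lemma \ref{lem:realvhs}), which is exactly the content of Theorem \ref{thm:cycoc}.
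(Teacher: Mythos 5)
Your proposal is correct and follows essentially the same route as the paper: the paper's argument is exactly that $\tilde{e}$ is a normalized volume form of $\EuH^A(X)$, that $\widetilde{\EuO\EuC}^-$ is an isomorphism of \vhs{} by Theorems \ref{thm:cycoc} and \ref{thm:ociso}, and that isomorphisms of \vhs{} preserve the space of normalized volume forms, so the preimage $\tilde{\Omega}$ of $\tilde{e}$ is normalized. The only difference is that you spell out the functoriality of the Deligne extension, weight filtration, and trivialization \eqref{eqn:Wflatbas}, which the paper leaves implicit.
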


Now we turn to the mirror family $Y \to \cM_B$. 
There is a unique Calabi-Yau structure on $\dbdg{Y}$ which corresponds to the canonical Calabi-Yau structure on $\EuF(X)$ under HMS. 
We have the isomorphism
\[ \tilde{I}_K: \HC_{-n}^-(\dbdg{Y}) \to H^0(\Omega^n_{Y/\cM_B}),\]
so we see that $n$-dimensional smooth Calabi-Yau structures on $\dbdg{Y}$ are in one-to-one correspondence with non-vanishing classes $\Omega_{Y}$ in the one-dimensional $\BbK_B$-vector space $H^0(\Omega^n_{Y/\cM_B})$. 
In particular, an $n$-dimensional Calabi-Yau structure corresponds to a choice of relative volume form for the family $Y\to \cM_B$. 
We recall the discussion in \S \ref{subsec:app}: $\Omega_Y$ determines a trivialization of the canonical bundle, and hence a pairing 
\[
\ext^\bullet(\mathcal{E},\mathcal{F}) \otimes
\ext^{n-\bullet}(\mathcal{F},\mathcal{E}) \to \BbK_B\] by Serre duality, which
corresponds to the Poincar\'{e} duality pairing on Lagrangian Floer cohomology,
under mirror symmetry.

We have arrived at the central question of this section: which relative volume form $\Omega_Y$ corresponds to the Calabi-Yau structure on the Fukaya category $\EuF(X)$?
Combining Corollary \ref{fukayacynormalized} and Theorem \ref{thm:main}, we see immediately that $\Omega_Y$ must be Hodge-theoretically normalized: this reduces the $\BbK_B^*$-ambiguity in $\Omega_Y$ to a $\C^*$-ambiguity. 
We can reduce this ambiguity even further: as a consequence of Hodge-theoretic mirror symmetry, the leading-order terms of the Yukawa couplings coincide. This means
\begin{eqnarray*}
 (-1)^{n(n+1)} \int_X \omega^n &=& \left. \int_Y \Omega_Y \wedge KS(q \partial_q)^n \cdot \Omega_Y \right|_{q=0}.
\end{eqnarray*}
This allows us to fix the $\C^*$ ambiguity in $\Omega_Y$ up to a sign. We have established:

\begin{thm}\label{thm:cystructures}
    Suppose that $X$ and $Y$ satisfy the hypotheses of Theorem \ref{thm:main}.
    Then the equivalence $\twsplit \EuF(X) \cong \psi^* \dbdg{Y}$ is in fact an
    equivalence of (smooth and proper) Calabi-Yau categories, where
    \begin{itemize}
        \item $\EuF(X)$ (and hence $\twsplit \EuF(X)$) is equipped with its canonical CY structure.

        \item $\dbdg{Y}$ is equipped with the unique (up to sign) CY structure corresponding
            to the Hodge-theoretically normalized relative volume form
            $\Omega_Y$ on $Y$ whose Yukawa coupling's leading term  
            is $(-1)^{n(n+1)/2} \int_X \omega^n$.
    \end{itemize}
\end{thm}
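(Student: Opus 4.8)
The plan is to transport the canonical smooth Calabi-Yau structure of $\EuF(X)$ across the commuting square used to prove Theorem \ref{thm:main}, and to recognise its image on the $B$-side as the relative volume form described in the statement. Recall from the discussion preceding Corollary \ref{fukayacynormalized} that the canonical smooth Calabi-Yau structure on $\EuF(X)$ is the unique class $\tilde\Omega \in \HC^-_{-n}(\EuF(X))$ with $\widetilde{\EuO\EuC}^-(\tilde\Omega) = \tilde e$, where $\tilde e = e \cdot u^0$ is the unit of $\EuH^A(X)$; by Corollary \ref{fukayacynormalized} the class $\tilde e$ is a Hodge-theoretically normalized volume form in the sense of Definition \ref{defn:normOm}. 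Let $\mathcal F \colon \twsplit \EuF(X) \to \psi^*\dbdg{Y}$ denote the HMS quasi-equivalence. By Morita invariance (Proposition \ref{prop:vhscat}) it induces an isomorphism of \vhs{} $\mathcal F_* \colon \HC^-_\bullet(\twsplit \EuF(X)) \xrightarrow{\sim} \psi^*\HC^-_\bullet(\dbdg{Y})$, and by Morita invariance of Calabi-Yau structures together with Proposition \ref{smoothpropercyprop}, the class $\tilde\Omega_Y := \mathcal F_*(\tilde\Omega)$ is precisely the (strong, smooth) Calabi-Yau structure on $\dbdg{Y}$ to which $\mathcal F$ carries the canonical one on $\EuF(X)$. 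So it remains to identify $\tilde\Omega_Y$.

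Commutativity of the square appearing in the proof of Theorem \ref{thm:main} gives $\BsideOC(\tilde\Omega_Y) = \Psi(\tilde e)$, where $\Psi \colon \EuH^A(X) \xrightarrow{\sim} \psi^*\EuH^B(Y)$ is the isomorphism of \vhs{} furnished by Theorem \ref{thm:main}. Since isomorphisms of \vhs{} preserve the complex line of normalized volume forms (this is the observation used to deduce Corollary \ref{fukayacynormalized}), $\Psi(\tilde e)$ is a normalized volume form for $\EuH^B(Y)$. Passing through the identification $\tilde{I}_K \colon \HC^-_{-n}(\dbdg{Y}) \xrightarrow{\sim} H^0(\Omega^n_{Y/\cM_B})$ underlying $\BsideOC$, together with the isomorphism $\EuF^{\ge n/2}\EuV \cong H^0(\Omega^n_{Y/\cM_B})$ of \S\ref{subsec:Bvhs}, we conclude that $\tilde\Omega_Y$ corresponds to a Hodge-theoretically normalized relative volume form $\Omega_Y$ on $Y \to \cM_B$. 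This already cuts the $\BbK_B^\times$-ambiguity down to a $\C^\times$-ambiguity.

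To pin down the remaining scalar up to sign, note that $\Psi$ intertwines connections (Theorems \ref{thm:cycoc}, \ref{thm:main}) and polarizations (Theorem \ref{thm:cycoc} and Conjecture \ref{conj:B}), hence identifies the $A$- and $B$-model Yukawa couplings; in particular their leading ($q=0$) terms coincide. Unwinding the definitions of the Yukawa couplings from the two \vhs{} (as in \S\ref{subsec:hodgems}) and the identification of $[\nabla_{q\partial_q}]$ with $\iota_{\mathsf{KS}(q\partial_q)}$ of \S\ref{subsec:Bvhs}, this reads
\[ (-1)^{n(n+1)}\int_X \omega^n = \left.\int_Y \Omega_Y \wedge \mathsf{KS}(q\partial_q)^n\cdot \Omega_Y \right|_{q=0}.\]
The right-hand side is a nonzero quadratic in $\Omega_Y$, so this equation determines $\Omega_Y$ up to sign, and it is exactly the normalization imposed in the statement. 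By Definition \ref{smoothcyequivalence} (and Remark \ref{rmk:equivalenceproperCY} on the proper side), $\mathcal F$ is therefore an equivalence of Calabi-Yau categories with the indicated structures.

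The step requiring care — rather than a new idea — is the verification that the chain of isomorphisms $\widetilde{\EuO\EuC}^-$, $\mathcal F_*$, $\BsideOC$, $\tilde{I}_K$ is compatible not only with the $\BbK\power{u}$-module and \vhs{} structures but also with the precise identification of $\HC^-_{-n}(\dbdg{Y})$ with $H^0(\Omega^n_{Y/\cM_B})$ and with the notion of normalized volume form; all of this is supplied by the structural results of \S\ref{sec:vhs} and \S\ref{subsec:vhsBside} already in hand, so no genuinely new difficulty arises.
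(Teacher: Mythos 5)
Your proposal is correct and follows essentially the same route as the paper: establish that the canonical smooth CY structure on $\EuF(X)$ is the preimage of the normalized class $\tilde e$ under $\widetilde{\EuO\EuC}^-$ (Corollary \ref{fukayacynormalized}), transport it through the square from the proof of Theorem \ref{thm:main} to identify its image with a Hodge-theoretically normalized relative volume form, and then fix the residual $\C^\times$-ambiguity up to sign via the leading term of the Yukawa coupling. No substantive differences from the paper's argument.
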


It follows from \cite{Kontsevich2006, Kontsevich:uq} that the Hochschild
homologies of $\twsplit \EuF(X)$ and $\psi^* \dbdg{Y}$, which are isomorphic to
$\QH^\bullet(X)[n]$ and $H^\bullet(\Omega^{-\bullet}Y)$ respectively, carry induced TFT
operations parametrized by chains on the (open) moduli space of curves with
marked points equipped with asymptotic markers, and moreover that these TFT
operations are equivalent. Although these operations are not quite parametrized
by Deligne-Mumford compactified moduli spaces, some of the operations will
coincide with the compactified operations (for instance the Yukawa couplings).
One thus expects that the open-closed map intertwines these TFT operations on
$\HH_\bullet(\EuF(X))$ with operations defined in terms of the closed
Gromov-Witten invariants of $X$ -- however we have not proved this.

If that were the case, one could hope to compute some higher-genus Gromov-Witten invariants (i.e., numbers) in this way, by writing the matrix coefficients with respect to a basis. 
The natural bases, with respect to which the matrix coefficients of the TFT operations are actual Gromov-Witten invariants, are bases for $H^\bullet(X;\C) \subset H^\bullet(X;\BbK_A)$. 
We would like to know what such bases correspond to on the other side of mirror symmetry. 
This can be achieved by giving an intrinsic characterization of $H^\bullet(X;\C) \subset H^\bullet(X;\BbK_A)$ in terms of the $A$-model \vhs{} structure: indeed, $H^\bullet(X;\BbK_A)$ is the associated graded of the Hodge filtration, which is isomorphic to the associated graded of the weight filtration in our setting. 
The induced connection on the associated graded of the weight filtration is trivial; and the flat sections of this connection correspond precisely to $H^\bullet(X;\C) \subset H^\bullet(X;\BbK_A)$, as follows immediately from the formula for the quantum connection.

\bibliographystyle{amsalpha}
\bibliography{library}

\end{document}